
\documentclass[11pt]{article}
\usepackage{amssymb}
\usepackage{amsbsy}
\usepackage[latin1]{inputenc}
\usepackage{amsthm}
\usepackage{graphicx} 
\usepackage{subfigure}
\usepackage{pst-eucl}
\usepackage[latin1]{inputenc}
\usepackage[english]{babel}
\usepackage{amsmath,amssymb,graphics,mathrsfs}
\usepackage{amsmath,amssymb,latexsym}
\usepackage{float}
\usepackage{graphicx,color}
\usepackage[T1]{fontenc}
\usepackage[active]{srcltx}
\usepackage{multicol}
\usepackage[latin1]{inputenc}
\usepackage{pst-all}
\usepackage{enumerate}
\usepackage{pstricks}
\usepackage{pstricks-add}
\usepackage{setspace}
\usepackage{soul}
\usepackage{cancel}
\usepackage{epsfig}
\usepackage{nonfloat}
\usepackage{caption}[=v1]

\usepackage[left=3cm,top=3cm,right=2.4cm,bottom=3.2cm]{geometry}
\parindent=0pt

\usepackage[colorlinks=true,citecolor=red,linkcolor=blue,urlcolor=RubineRed,pdfpagetransition=Blinds,pdftoolbar=false,pdfmenubar=false]{hyperref}


%
%
%
%
%


\newcommand{\R}{\hbox{\rm I \kern -5pt R}}     
\newcommand{\p} {\hbox{\rm I \kern -5pt P}}
\def\x  {\boldsymbol x}

\def\Om       {\Omega}

\def\n        {{\boldsymbol n}}

\def\eps        {\varepsilon}


\def \hueco{\noalign{\medskip}}
\def \pato{\forall\,}
\def \beq{\begin{equation}}
\def \eeq{\end{equation}}
\def \ba{\begin{array}}
\def \ea{\end{array}}
\def \dis{\displaystyle}

\newtheorem{obs}{Remark}
\newtheorem{lem}{Lemma}
\newtheorem{coro}{Corollary}

%
%




\DeclareMathOperator*{\esssup}{ess\,sup}

\setcounter{page}{1}

\begin{document}

\title{Energy-stable and boundedness preserving numerical schemes for the Cahn-Hilliard equation with degenerate mobility} 

\author{F.~Guill\'en-Gonz\'alez\thanks{Dpto. Ecuaciones Diferenciales y An\'alisis Num\'erico and IMUS, 
Universidad de Sevilla, Facultad de Matem\'aticas, C/ Tarfia, S/N, 41012 Sevilla (SPAIN). Email: guillen@us.es},
~and
 G.~Tierra\thanks{Department of Mathematics, University of North Texas, Denton TX (USA). Email:  gtierra@unt.edu}}

\maketitle


\begin{abstract}
Two new numerical schemes to approximate the Cahn-Hilliard equation with degenerate mobility  (between stable values $0$ and $1$) are presented, by using two different non-centered approximation of the mobility. We prove that both schemes are energy stable and preserve the maximum principle approximately, i.e.~the amount of the solution being outside of the interval $[0,1]$ goes to zero in terms of a truncation parameter. Additionally, we present several numerical results in order to show the accuracy and the well behavior of the proposed schemes, comparing both schemes and the corresponding centered scheme.
\end{abstract}
%
%
\section{Introduction}

The study of interfacial dynamics has become a key component to understand the behavior of a great variety of systems, in scientific, engineering and industrial applications.
A very effective 
approach for representing interface problems is the \textit{diffuse interface/phase field} approach, which describes the interfaces by layers of small thickness and whose structure is determined by a balance of molecular forces, in such a way that the tendencies for mixing and de-mixing compete through a non-local \textit{mixing} energy. In fact, this idea can be traced to van der Waals \cite{Waals}, and can be viewed as the foundation for the phase-field theory for phase transition and critical phenomena.  This approach uses a phase-field function $\phi= \phi(\x,t)$ that takes distinct values in the pure phases (in our case $\phi=0$ and $\phi=1$) and varies smoothly in the interfacial regions.

The Cahn-Hilliard equation was originally introduced in \cite{Cahn1958} to model the thermodynamic forces driving phase separation, arriving to a system with a gradient flow structure, that is, when there are no external forces applied to the system, the total free energy of the mixture is not increasing in time. The Cahn-Hilliard equation can be defined as a mass balance law with a phase flux $\mathcal{J}$ and $M(\phi)$ representing a mobility function:
$$
\phi_t + \nabla\cdot\mathcal{J}\,=\,0
\quad\quad
\mbox{ with }
\quad\quad
\mathcal{J}\,=\,-M(\phi)\nabla\left(\frac{\delta E(\phi)}{\delta \phi}\right)\,.
$$
For a detailed derivation and discussion of the physics of the Cahn-Hilliard equation we refer the reader to \cite{Bates93,Novick84}. In this work we focus on the case where the mobility $M(\phi)$ is a non-linear degenerate function, meaning that the flux $\mathcal{J}$ only acts away of the pure phases $\phi=0$ and $\phi=1$. The existence and regularity of weak solutions of the Cahn-Hilliard with a degenerate mobility was stablished in \cite{ElliotGarcke1996}, as well as a maximum principle for $\phi$, of type $0\le \phi \le 1$. More recently, well-posedness for the case of a strictly non-negative mobility term was stablished in \cite{DaiDu}. Traditionally, researchers have focused in designing numerical schemes for the constant mobility case (check \cite{Tierra} for an overview and comparison of different approaches), although the case of non degenerate mobility has also received attention. Mixed finite element approximations using logarithmic potentials and degenerate mobilities were studied in \cite{Barrett98,Copetti92}. More recently, several discontinuous Galerkin methods have been considered for the problem with and without convection \cite{DKR22,Kay2009,LiuFrankRiviere,XiaXuShu}. There has been also a great amount of effort in the field to get a better understanding of the mechanisms behind the interface motion, especially in the limit case of taking the interface width to zero. We refer the reader to \cite{DaiDu12,DaiDu14,DaiDu16,Leeetal1,Leeetal2,Leeetal3,Voigt} and the references therein to get an idea of some of the most relevant analytical and computational studies in this direction.

In this work we derive accurate numerical schemes using Finite Differences (FD) in time and Finite Elements (FE) in space to approximate the Cahn-Hilliard equation with degenerate mobility. The proposed schemes are energy stable and at the same time they satisfy an approximate maximum principle, i.e., the amount of the solution being outside of the interval $[0,1]$ is bounded by the small parameter $\varepsilon$ used to truncate the mobility term $M(\phi)$. Our approach is based on using non-centered approximations of the mobility term that lead to estimates involving non-singular functionals, which end up being the key point to derive the approximate maximum principles. In \cite{KMAD19, KMAD20, KMAD21, KMAD22, GuillenTierra} schemes based in similar ideas have proved useful themselves in the context of some chemotaxis models.

This work is organized as follows: In Section~\ref{sec:model} we present the Cahn-Hilliard model with degenerate mobility and we introduce the truncated-degenerated model together with the truncated-non-degenerated model and with the regularization of two singular potentials, so-called G$_\varepsilon(\phi)$ and J$_\varepsilon(\phi)$, detailing the estimates that they satisfy. Two new numerical schemes are presented in Section~\ref{sec:schemes}, together with the 
properties that they satisfy, including conservation of volume, energy-stability and approximated maximum principles. In Section~\ref{sec:simulations} we present several numerical results illustrating the accuracy of the proposed numerical schemes and their applicability to simulate complex situations. Finally the conclusions of our work are stated in Section~\ref{sec:conclusions}.

\section{Model}\label{sec:model}
Let $\Omega\subset\mathbb{R}^d$ (with $d=1,2,3$) be a bounded spatial domain and $[0,T]$ a finite time interval.
The Cahn-Hilliard equation is a fourth order PDE where the phase function $\phi=\phi(\x,t)$ satisfy the PDE 
\beq\label{eq:CH}
\phi_t - \nabla \cdot\left[M(\phi)\nabla\left(\frac{\delta E}{\delta \phi}\right)\right] \,=\,0\, ,
\quad\quad\quad \mbox{ for } (\x,t)\in\Omega\times(0,T)\,,
\eeq
subject to the following boundary and initial conditions:
\beq\label{eq:CHbc}
\partial_\n \phi|_{\partial\Omega}=\partial_\n \left(\frac{\delta E}{\delta \phi}\right)\Big|_{\partial\Omega}\,=\,0
\quad
\mbox{ and }
\quad \phi(\x,0)=\phi_0(\x)
\quad
\mbox{ with }
\quad
0\leq\phi_0(x)\leq 1\,.
\eeq
Here, $M(\phi)\ge 0$ is a mobility function and $E(\phi)$ denotes the free energy of the system
\beq\label{eq:CHenergy}
E(\phi)
\,:=\,
E_{philic}(\phi)+E_{phobic}(\phi)
\,:=\,
\int_\Omega \left(\frac12|\nabla\phi|^2 + F(\phi)\right)d\x\,,
\eeq
with $\frac{\delta E}{\delta \phi}$ denoting the Riesz identification in $L^2(\Omega)$  of the variational derivative of the functional $E(\phi)$ with respect to $\phi$, that is,
\beq\label{eq:varderE}
\frac{\delta E}{\delta \phi} 
\,=\,
-\Delta\phi  + F'(\phi)
\,.
\eeq 

The Cahn-Hilliard equation \eqref{eq:CH} can also be written as a system of two second order PDEs introducing as a new unknown the chemical potential $\mu:=\frac{\delta E}{\delta \phi}$ such that:
\beq\label{eq:CHsystem}
\left\{\ba{rcl}
\phi_t - \nabla\cdot(M(\phi)\nabla\mu) 
&=&0\,,
\\ \hueco
-\Delta\phi + F'(\phi)
&=&
\mu\,,
\ea\right.
\eeq
endowed with the boundary and initial conditions 
\beq\label{eq:CHbc-bis}
\partial_\n \phi |_{\partial\Omega}=\partial_\n\mu |_{\partial\Omega}\,=\,0
\quad
\mbox{ and }
\quad \phi(\x,0)=\phi_0(\x)\,.
\eeq

The system depends on a double-well potential $F(\phi)$ taking two minimum (stable) values. The original potential introduced by Cahn and Hilliard in \cite{Cahn1958} it is known as the \textit{Flory-Huggins} potential:
$$
F_{FH}(\phi)\,:=\, \frac{\theta}2[\phi\ln(\phi) + (1-\phi)\ln(1-\phi)]
+ \frac{\theta_c}2\phi(1-\phi)\,,
$$
with $\theta, \theta_c > 0$ denoting the absolute and critical temperature of the system, respectively.
The fact that $F_{FH}(\phi)$ involves logarithmic terms (making the derivative of the  potential  singular) introduces great difficulties to study the system analytically as well as numerically. Then, several approximations of the potential have been considered, being the most widely used  the so-called \textit{Ginzburg-Landau} one:
\beq\label{eq:potentialGL}
F_{GL}(\phi)\,:=\,\frac1{4\eta^2}\phi^2(\phi - 1)^2\,,
\eeq
with $\eta>0$ being a small parameter (related with the interfacial width). 

Some truncated versions of the potential \eqref{eq:potentialGL} have been  proved useful to design numerical schemes in different settings related with phase field models \cite{Nochetto14,Salgado12,SHEN10b,WuZwietenZee}. Moreover, using a  truncated potential and a constant mobility term ($M(\phi)=C$), Caffarelli and Muller \cite{Caffarelli} were able to show $L^\infty$-bounds on the solutions of the Cahn-Hilliard equations when the considered domain is the whole space $\Omega=\mathbb{R}^d$ .

In this work we focus on the \textit{Ginzburg-Landau} potential, so from now on we denote $F_{GL}(\phi)$ by $F(\phi)$, omitting the subscripts for the sake of simplicity of notation. In general, potentials can be splitted into two parts, a convex (or contractive) part ($F_c''(\phi)\geq0$) and a concave (or expansive) part ($F_e''(\phi)\leq0$), such that
$$
F(\phi)\,=\,F_c(\phi) + F_e(\phi)\,.
$$
In particular, in this work we consider the following splitting (it will be used in the proof of Lemma~\ref{lem:Gestimatemod})
\beq\label{eq:convexsplitting1}
F_c(\phi)
\,:=\,
\frac{1}{4\eta^2}\left(\phi^4 - 2 \phi^3 + \frac32\phi^2\right)
\quad
\mbox{ and }
\quad
F_e(\phi)=-\frac1{8\eta^2}\phi^2\,,
\eeq
which satisfies
\beq\label{eq:convexsplitting2}
F_c''(\phi)=\frac{3}{\eta^2}\left(\phi - \frac12\right)^2 \geq 0
\quad
\mbox{ and }
\quad
F_e''(\phi)=-\frac{1}{4\eta^2} \leq 0\,.
\eeq
%
%

\subsection{Truncated-Degenerated model}
We consider the degenerate and truncated by zero mobility term:
\beq\label{def:mobility0}
M_0(\phi)
\,=\,
\left\{\ba{lr}
\phi(1 - \phi), & \phi\in[0,1]\,,
\\ [2ex]
0 & \mbox{otherwise}\,.
\ea\right.
\eeq
Therefore the truncated-degenerated Cahn-Hilliard system can be rewritten as: Find $(\phi,\mu)$ such that
\beq\label{eq:CHsystem0}
\left\{\ba{rcl}
\phi_t - \nabla\cdot(M_0(\phi)\nabla\mu) 
&=&0\,,
\\ \hueco
-\Delta\phi + F'(\phi)
&=&
\mu\,.
\ea\right.
\eeq

An existence result for the problem with degenerate mobility \eqref{eq:CHsystem0} is presented in \cite{ElliotGarcke1996},
as well as a result about the boundedness of the magnitude of the solution, that is,
\beq\label{eq:boundsolution}
\mbox{ if }\quad 
\phi(\x,0)\in [0,1] \mbox{ a.e. in } \Omega,
\quad\mbox{ then }\quad
\phi(\x,t)\in [0,1]
 \mbox{ a.e. in } \Omega\times(0,T)\,.
\eeq

In particular, under reasonable assumptions for the initial data $\phi_0$, the solution of problem \eqref{eq:CHsystem0} satisfy the following regularity results \cite{ElliotGarcke1996}:
\begin{itemize}
\item $
\phi\in L^2(0,T;H^2(\Omega))\cap L^\infty(0,T;H^1(\Omega))\cap C([0,T];L^2(\Omega))\,,
$
\item $\phi_t \in L^2(0,T;(H^1(\Omega))')\,,$
\item $M_0(\phi)\nabla\mu \in L^2(\Omega\times(0,T);\mathbb{R}^d)\,.$
\end{itemize}



\begin{obs}
The existence and boundedness results presented in \cite{ElliotGarcke1996} hold for  the \textit{Flory-Huggins} potential ($F_{FH}(\phi)$) and for the \textit{Ginzburg-Landau} ($F_{GL}(\phi)$) one.
\end{obs}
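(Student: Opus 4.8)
The plan is not to reprove the theorem but to verify that the structural hypotheses under which \cite{ElliotGarcke1996} establish existence, regularity and the bound \eqref{eq:boundsolution} are satisfied by both potentials. First I would isolate those hypotheses: the mobility is continuous, non-negative and degenerate at the endpoints, with $M_0(0)=M_0(1)=0$; the potential splits into a (possibly singular) convex part plus a smooth remainder; and, crucially, the product $M_0(\phi)\,F''(\phi)$ stays bounded up to $\phi=0$ and $\phi=1$, so that the effective diffusion $M_0\,F''$ produced by the chain rule is controlled.

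For the Flory--Huggins potential this is exactly the canonical case of \cite{ElliotGarcke1996}. Its logarithmic part gives $F_{FH}''(\phi)=\frac{\theta}{2}\left(\frac{1}{\phi}+\frac{1}{1-\phi}\right)-\theta_c$, singular at both endpoints, but multiplication by $M_0(\phi)=\phi(1-\phi)$ renders $M_0\,F_{FH}''$ bounded, so the compatibility condition holds. Moreover the logarithmic terms themselves act as a barrier: testing the chemical-potential equation with $F_{FH}'(\phi)$ keeps the approximate solutions strictly inside $(0,1)$, and the bound \eqref{eq:boundsolution} follows in the limit.

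For the Ginzburg--Landau potential the analytic verification is easier but the confinement is subtler. Since $F_{GL}$ is a smooth quartic, $F_{GL}''$ is bounded on all of $\mathbb{R}$, so $M_0\,F_{GL}''$ is trivially bounded and existence follows from the same Galerkin/entropy scheme. The delicate point is \eqref{eq:boundsolution}: there is no singular barrier in the potential, so the maximum principle must come solely from the degeneracy of $M_0$, which vanishes identically outside $[0,1]$. I would recover it via the Elliott--Garcke entropy density $\Phi$ defined by $\Phi''(\phi)=1/M_0(\phi)$: since $M_0\equiv 0$ beyond $[0,1]$, this entropy becomes unbounded there, and the uniform a priori estimate on $\int_\Omega\Phi(\phi)$ forces the (regularized) solutions, and hence their limit, to remain in $[0,1]$ almost everywhere.

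The main obstacle is precisely this last step for the smooth potential, where the bound \eqref{eq:boundsolution} cannot be read off from the potential and hinges on the interplay between the degenerate mobility and the entropy estimate, typically made rigorous through a non-degenerate regularization $M_\varepsilon$ of $M_0$ followed by a passage to the limit. Once both potentials are checked to satisfy the two hypotheses---boundedness of $M_0\,F''$ and availability of the coercive entropy---the existence, regularity and boundedness results of \cite{ElliotGarcke1996} apply directly, which is the content of the remark.
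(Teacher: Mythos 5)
The paper gives no proof of this remark at all: it is a pure citation to \cite{ElliotGarcke1996}, and your outline is the correct way to justify it, namely checking that both potentials satisfy the structural hypotheses of that paper (boundedness of $M_0(\phi)F''(\phi)$ on $[0,1]$ and availability of the entropy density $\Phi$ with $\Phi''=1/M_0$, made rigorous via the regularization $M_\varepsilon$ and the limit $\varepsilon\to0$). This is exactly the machinery the paper itself reproduces in its Section on the truncated non-degenerate model (the functionals $G_\varepsilon$ with $G_\varepsilon''=1/M_\varepsilon$ and the approximate bounds of order $\varepsilon^\alpha$), so your route matches the intended one. One small slip: $F_{GL}''$ is a quadratic polynomial and hence \emph{not} bounded on all of $\mathbb{R}$; what matters, and what does hold, is that it is bounded on $[0,1]$ and that $M_0$ vanishes identically outside $[0,1]$, so the product $M_0\,F_{GL}''$ is bounded --- your conclusion stands.
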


\subsection{Truncated-Non-Degenerated model}
We follow the arguments in \cite{ElliotGarcke1996} and we present a non-degenerated version of problem \eqref{eq:CHsystem0} by replacing the mobility term $M_0(\phi)$ defined in \eqref{def:mobility0} by $M_\varepsilon(\phi)$ depending on a small parameter $\varepsilon>0$ such that 
\beq\label{eq:defMeps}
M_\varepsilon(\phi)
\,:=\,
\left\{
\ba{ll}
M(\varepsilon) 
& \mbox{ if } \phi < \varepsilon\,,
\\ \hueco
M(\phi) 
& \mbox{ if }  \varepsilon \le \phi \le1-\varepsilon\,,
\\ \hueco
M(1-\varepsilon) 
& \mbox{ if } 1-\varepsilon < \phi\,.
\ea\right.
\eeq
In particular, $| M_\varepsilon(\phi) - M_0(\phi)|\le \varepsilon(1-\varepsilon) $  for all $\phi\in \mathbb{R}$.


\begin{figure}[H]
\begin{center}
\includegraphics[scale=0.1152]{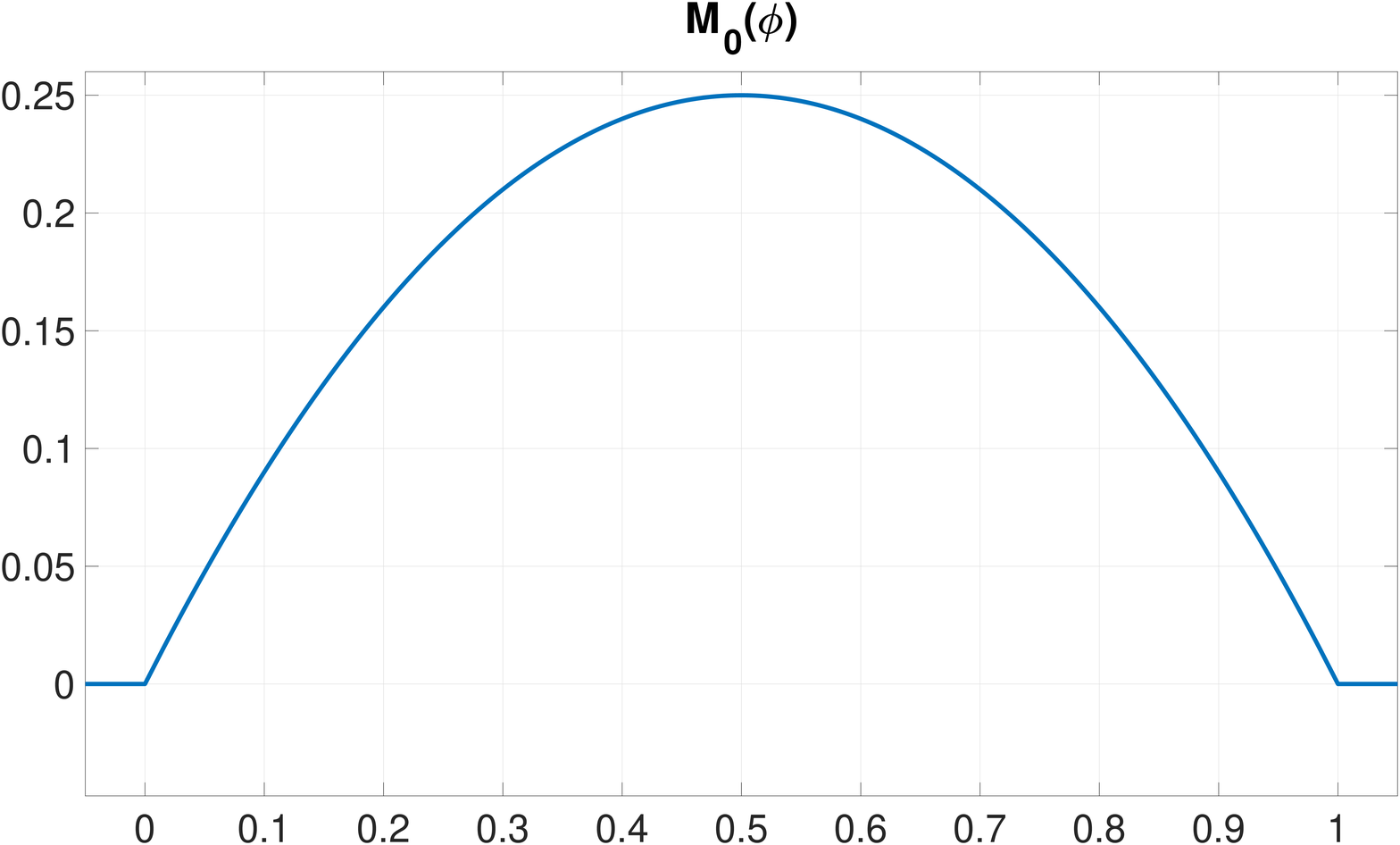}
\includegraphics[scale=0.1152]{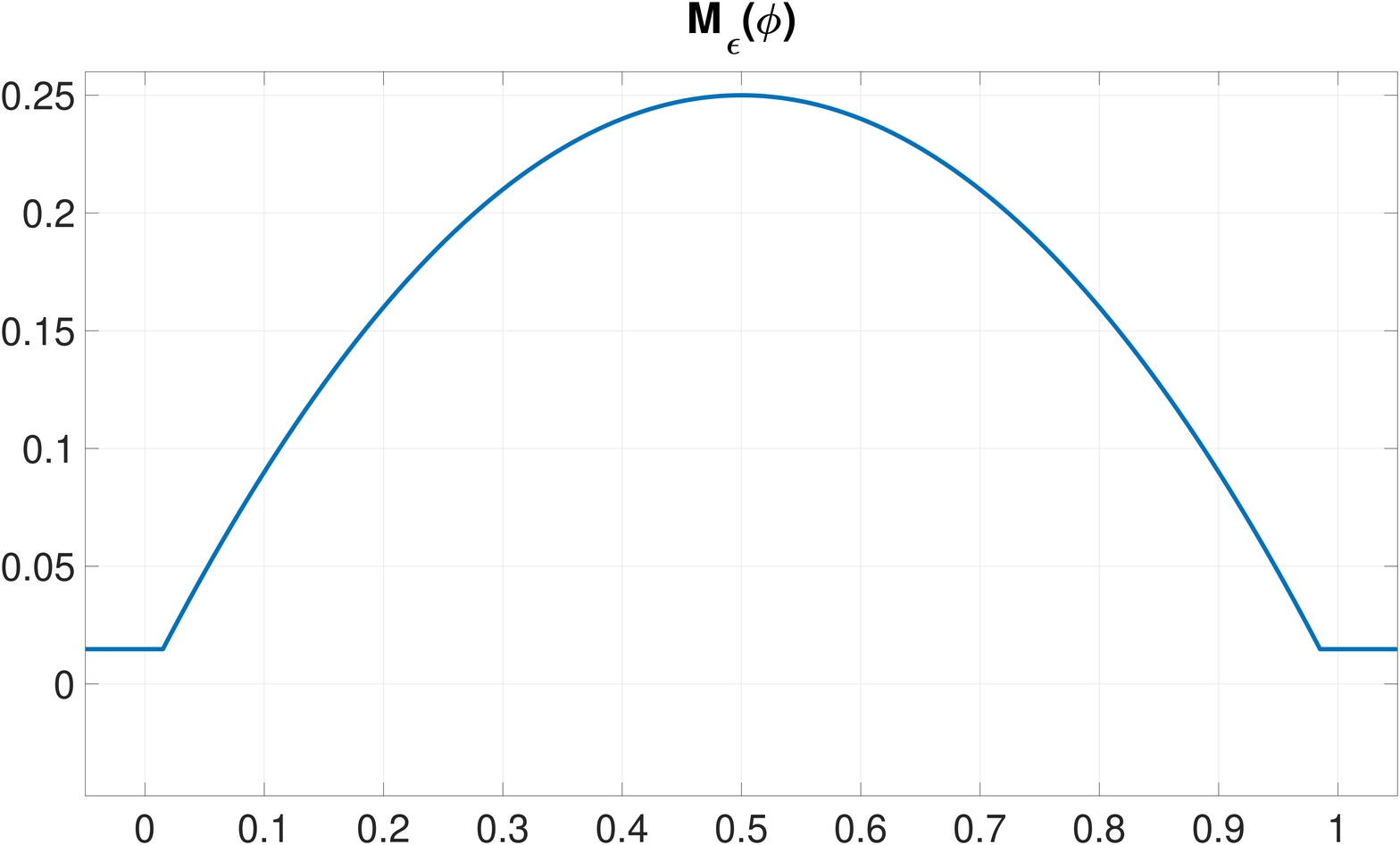}
\end{center}
\caption{Comparison between mobility term $M_0(\phi)$ presented in \eqref{def:mobility0} (left) and mobility term $M_\varepsilon(\phi)$ presented in \eqref{eq:defMeps} with $\varepsilon=0.015$ (right).}
\end{figure}

Then, using definition \eqref{eq:defMeps} we consider the approximate version of \eqref{eq:CHsystem0}:
\beq\label{eq:CHsystemmod}
\left\{\ba{rcl}
\phi_t - \nabla\cdot(M_\varepsilon(\phi)\nabla\mu) 
&=&0\,,
\\ \hueco
-\Delta\phi + F'(\phi)
&=&
\mu\,,
\ea\right.
\eeq
subject to the same boundary and initial conditions presented in \eqref{eq:CHbc-bis} and with the same energy \eqref{eq:CHenergy} considered in the original problem \eqref{eq:CHsystem}. An existence result for problem \eqref{eq:CHsystemmod} is also presented in \cite{ElliotGarcke1996}.

In particular, under reasonable assumptions for the initial data $\phi_0$, the solution of problem \eqref{eq:CHsystemmod} satisfy the following regularity results \cite{ElliotGarcke1996}:
\begin{itemize}
\item $
\phi\in L^\infty(0,T;H^1(\Omega))\cap C([0,T];L^2(\Omega))\,,
$
\item $\phi_t \in L^2(0,T;(H^1(\Omega))')\,,$
\item $\mu \in L^2(0,T;H^1(\Omega))\,.$
\end{itemize}
Moreover, the following approximate boundedness results are provided:
$$
\esssup_{0\leq t \leq T}\int_\Omega (\phi)_-^2 d\x \leq C\varepsilon^\alpha
\quad\quad\mbox{ and }\quad\quad
\esssup_{0\leq t \leq T}\int_\Omega (\phi - 1)_+^2 d\x\leq C\varepsilon^\alpha\,,
$$
with $\alpha=1$ in the case of the potential and mobility terms considered in this work.

In fact, these results are the main tools to show the existence and boundedness of solution of the degenerated problem \eqref{eq:CHsystem0} via a limit process as $\varepsilon\to 0$ (see  \cite{ElliotGarcke1996}).

In particular, for the approximated model \eqref{eq:CHsystemmod}, the following result holds:

\begin{lem}\label{lem:energylawmod}
Any solution of the problem \eqref{eq:CHsystemmod} and \eqref{eq:CHbc-bis},  satisfies the conservation property 
\beq\label{eq:conservation}
\int_\Omega \phi(t,\x) d\x
\,=\,
\int_\Omega \phi(0,\x) d\x
\quad \pato t>0
\eeq
and the following integral version of the energy law
\beq\label{eq:Energylawmod}
E (\phi(t))
+
\int_0^t \int_\Om M_\varepsilon(\phi)|\nabla\mu|^2 d\x \,d\tau
=
E (\phi_0)
\quad \mbox{ a.e. } t>0\,.
\eeq
In particular, \eqref{eq:Energylawmod} implies the following estimates
\begin{equation}\label{estimH1bL1mod}
\int_\Omega |\nabla\phi|^2  d\x \in L^\infty(0,+\infty)
\quad \mbox{ and }\quad
\int_\Om M_\varepsilon(\phi)|\nabla\mu|^2 d\x\in L^1(0,+\infty)\,.
\end{equation}
\end{lem}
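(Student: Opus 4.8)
The plan is to establish Lemma~\ref{lem:energylawmod} by the standard energy-method for gradient flows: test the two equations of \eqref{eq:CHsystemmod} against suitable functions, integrate by parts, and combine. The conservation property \eqref{eq:conservation} is the easiest part: I would test the first equation $\phi_t-\nabla\cdot(M_\varepsilon(\phi)\nabla\mu)=0$ with the constant function $1$, integrate over $\Omega$, and use the divergence theorem together with the no-flux boundary condition $\partial_\n\mu|_{\partial\Omega}=0$. This kills the mobility term and leaves $\frac{d}{dt}\int_\Omega \phi\,d\x=0$, which integrates in time to \eqref{eq:conservation}.

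For the energy law \eqref{eq:Energylawmod}, the key idea is to test the first equation with $\mu$ and the (time-differentiated) second equation with $\phi_t$, then add. Testing the first equation with $\mu$ gives $\langle\phi_t,\mu\rangle + \int_\Omega M_\varepsilon(\phi)|\nabla\mu|^2\,d\x=0$ after integrating by parts in the flux term and invoking $\partial_\n\mu|_{\partial\Omega}=0$. The nontrivial step is to recognize $\langle\phi_t,\mu\rangle$ as the time derivative of the energy: using $\mu=-\Delta\phi+F'(\phi)$ from the second equation and pairing against $\phi_t$, integration by parts (with $\partial_\n\phi|_{\partial\Omega}=0$) yields $\langle\phi_t,\mu\rangle=\frac{d}{dt}\int_\Omega\bigl(\tfrac12|\nabla\phi|^2+F(\phi)\bigr)\,d\x=\frac{d}{dt}E(\phi)$. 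Substituting back gives $\frac{d}{dt}E(\phi)+\int_\Omega M_\varepsilon(\phi)|\nabla\mu|^2\,d\x=0$, and integrating from $0$ to $t$ produces \eqref{eq:Energylawmod}.

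The estimates in \eqref{estimH1bL1mod} then follow directly from the energy law. Since $M_\varepsilon(\phi)\ge 0$, the dissipation term is nonnegative, so \eqref{eq:Energylawmod} forces $E(\phi(t))\le E(\phi_0)$ for all $t$; because $F(\phi)\ge 0$ (the Ginzburg--Landau potential is a nonnegative double-well), the gradient part $\tfrac12\int_\Omega|\nabla\phi|^2\,d\x$ is controlled by $E(\phi_0)$ uniformly in $t$, giving the $L^\infty(0,\infty)$ bound. Letting $t\to\infty$ and using $E\ge 0$ yields $\int_0^\infty\int_\Omega M_\varepsilon(\phi)|\nabla\mu|^2\,d\x\,d\tau\le E(\phi_0)<\infty$, which is the $L^1(0,\infty)$ statement.

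The main obstacle is rigor rather than the formal computation: the pairing $\langle\phi_t,\mu\rangle$ must be interpreted in the duality $\langle H^1(\Omega)',H^1(\Omega)\rangle$ since the regularity only guarantees $\phi_t\in L^2(0,T;(H^1(\Omega))')$ and $\mu\in L^2(0,T;H^1(\Omega))$, so testing the equation with $\mu$ is legitimate only in this weak sense. Likewise, identifying $\langle\phi_t,\mu\rangle$ with $\frac{d}{dt}E(\phi)$ requires a chain-rule argument for the (only $H^1$-in-space) function $\phi$, which is not immediate at the level of the stated regularity and is typically justified by a Galerkin/approximation procedure or a standard lemma on the differentiability of $t\mapsto E(\phi(t))$ along solutions; since these are the solutions constructed in \cite{ElliotGarcke1996}, I would invoke that the energy identity holds along the approximating sequence and passes to the limit, yielding \eqref{eq:Energylawmod} a.e.\ in $t$.
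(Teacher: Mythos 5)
Your proposal follows exactly the paper's argument: test the first equation of \eqref{eq:CHsystemmod} by $1$ for the conservation property, then by $\mu$, and test the second equation by $\phi_t$, combining the two and deferring the rigorous justification of the chain rule to the construction in \cite{ElliotGarcke1996}. The additional details you supply for deducing \eqref{estimH1bL1mod} (nonnegativity of $M_\varepsilon$ and of the Ginzburg--Landau potential) are correct and consistent with what the paper leaves implicit.
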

\begin{proof}
We obtain \eqref{eq:conservation} by testing \eqref{eq:CHsystemmod}$_1$ by $1$. Moreover, testing equation \eqref{eq:CHsystemmod}$_1$ by $\mu$ and \eqref{eq:CHsystemmod}$_2$ by $\phi_t$ and working as in  \cite{ElliotGarcke1996} we arrive at \eqref{eq:Energylawmod}.
\end{proof}

The next step is to introduce two functionals $G_\varepsilon(\phi)$ and $J_\varepsilon(\phi)$ (defined from $M_\varepsilon(\phi)$) such that they satisfy estimates that will be helpful to show the approximate boundedness of the solution. Moreover, discrete versions of these estimates will prove useful when deriving the numerical schemes, in order to be able to show approximate maximum principle properties.
\subsubsection{Functional $G_\varepsilon(\phi)$}

Following the ideas in \cite{ElliotGarcke1996}, we define a new functional $G_\varepsilon(\phi)$ by using the truncated mobility $M_\varepsilon(\phi)$ for a fixed  truncation parameter $\varepsilon>0$:

\beq\label{eq:G2trunc}
G''_\varepsilon(\phi)
\,:=\,
\frac{1}{M_\varepsilon(\phi)}
\quad \forall\, \phi\in \mathbb{R}\,.
\eeq


\begin{obs}
Denoting 
$$
G(\phi)\,:=\,\phi\ln(\phi) + (1 - \phi)\ln(1-\phi) + 1
\quad \forall\, \phi\in [0,1]\,,
$$
with
$$
G'(\phi)\,=\,\ln\left(\frac\phi{1-\phi}\right) \quad \forall\, \phi\in (0,1),
$$
and
$$
G''(\phi)\,=\,\frac1{\phi(1-\phi)}=\frac1{M(\phi)}
\quad \forall\, \phi\in (0,1)\,,
$$
then the functionals $G''_\varepsilon(\phi)$ in \eqref{eq:G2trunc}, $G'_\varepsilon(\phi)$ and $G_\varepsilon(\phi)$ can be written as
$$
G'_\varepsilon(\phi)
\,:=\,
\left\{
\ba{ll}
\dis G'(\varepsilon) + G''(\varepsilon)(\phi -\varepsilon)
& \mbox{ if } \phi < \varepsilon\,,
\\ \hueco
\dis G'(\phi)
 & \mbox{ if }  \varepsilon \le \phi \le 1-\varepsilon\,,
\\ \hueco
\dis G'(1-\varepsilon) + G''(1-\varepsilon)(\phi -(1-\varepsilon))
& \mbox{ if } 1-\varepsilon < \phi\,,
\ea
\right.
$$
and
$$
G_\varepsilon(\phi)
\,:=\,
\left\{
\ba{ll}
\dis 
G(\varepsilon)+G'(\varepsilon)(\phi -\varepsilon)+ \frac12 G''(\varepsilon)(\phi -\varepsilon) ^2
& \mbox{ if } \phi < \varepsilon\,,
\\ \hueco G(\phi)
& \mbox{ if }  \varepsilon \le \phi \le 1-\varepsilon\,,
\\ \hueco
\dis G(1-\varepsilon)+G'(1-\varepsilon)(\phi -(1-\varepsilon))+ \frac12 G''(1-\varepsilon)(\phi -(1-\varepsilon)) ^2
& \mbox{ if } 1-\varepsilon < \phi\,,
\ea
\right.
$$

\end{obs}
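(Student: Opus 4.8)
The statement is a verification claim: given the definition $G''_\varepsilon=1/M_\varepsilon$ in \eqref{eq:G2trunc}, one recovers the displayed piecewise formulas for $G'_\varepsilon$ and $G_\varepsilon$ by antidifferentiating twice. The plan is therefore to integrate \eqref{eq:G2trunc} once to obtain $G'_\varepsilon$ and once more to obtain $G_\varepsilon$, choosing the integration constants on the two outer branches so that $G_\varepsilon$ is continuously differentiable across the junctions $\phi=\varepsilon$ and $\phi=1-\varepsilon$ and coincides with the reference functions $G'$ and $G$ on the central interval $[\varepsilon,1-\varepsilon]$.

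First I would record the explicit form of the second derivative. Because $M(\phi)=\phi(1-\phi)$ is continuous and the truncation \eqref{eq:defMeps} replaces $M$ by its own boundary values outside $[\varepsilon,1-\varepsilon]$, the function $M_\varepsilon$ is continuous and bounded below by $M(\varepsilon)=\varepsilon(1-\varepsilon)>0$; hence $G''_\varepsilon=1/M_\varepsilon$ is continuous on all of $\mathbb{R}$ and equals
$$
G''_\varepsilon(\phi)=
\begin{cases}
G''(\varepsilon) & \phi<\varepsilon,\\
G''(\phi) & \varepsilon\le\phi\le1-\varepsilon,\\
G''(1-\varepsilon) & 1-\varepsilon<\phi,
\end{cases}
$$
using $1/M(\phi)=G''(\phi)$ on the central branch. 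The continuity of $G''_\varepsilon$ is worth noting at the outset, as it guarantees that any twice-integrated primitive is genuinely $C^2(\mathbb{R})$, so the only remaining freedom lies in the integration constants.

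Next I would integrate on the outer branches. Normalizing $G'_\varepsilon:=G'$ on $[\varepsilon,1-\varepsilon]$, the constancy of $G''_\varepsilon$ on $\phi<\varepsilon$ forces $G'_\varepsilon$ to be affine there; the unique affine function matching $G'(\varepsilon)$ at the junction is $G'(\varepsilon)+G''(\varepsilon)(\phi-\varepsilon)$, which is the stated expression, and symmetrically at $1-\varepsilon$. Integrating once more, and again fixing the constant by continuity at the junction, the fundamental theorem of calculus gives on $\phi<\varepsilon$
$$
G_\varepsilon(\phi)=G(\varepsilon)+\int_\varepsilon^\phi G'_\varepsilon(s)\,ds
=G(\varepsilon)+G'(\varepsilon)(\phi-\varepsilon)+\tfrac12 G''(\varepsilon)(\phi-\varepsilon)^2,
$$
the degree-two Taylor polynomial of $G$ centered at $\varepsilon$; the branch $\phi>1-\varepsilon$ produces the analogous Taylor polynomial at $1-\varepsilon$.

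The only point requiring care -- and the mild subtlety of an otherwise routine computation -- is the consistency of the three pieces at the junctions: one must check that no jump in $G_\varepsilon$, $G'_\varepsilon$ or $G''_\varepsilon$ is introduced. This is precisely where continuity of $M$ enters, since it yields $G''(\phi)\to G''(\varepsilon)$ as $\phi\downarrow\varepsilon$ so that the central branch already agrees to second order with the outer quadratic at $\phi=\varepsilon$ (and likewise at $1-\varepsilon$); the $C^1$ matchings I imposed are then automatically compatible with $G_\varepsilon\in C^2(\mathbb{R})$. This identifies the displayed formulas as the unique primitives of \eqref{eq:G2trunc} under the chosen normalization, completing the verification.
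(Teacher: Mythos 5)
Your verification is correct and matches the paper's (implicit) construction: the paper presents the piecewise formulas without proof, and they are exactly what you obtain by twice antidifferentiating $G''_\varepsilon=1/M_\varepsilon$ with the normalization $G_\varepsilon=G$ on $[\varepsilon,1-\varepsilon]$ and $C^1$ matching at the junctions, i.e.\ the second-order Taylor extensions of $G$ at $\varepsilon$ and $1-\varepsilon$. Your observation that continuity of $M_\varepsilon$ (with lower bound $\varepsilon(1-\varepsilon)$) makes the resulting $G_\varepsilon$ genuinely $C^2(\mathbb{R})$ is the same fact the paper records in Remark~\ref{rk:boundsG}.
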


\begin{obs}\label{rk:boundsG}
Functional $G_\varepsilon(\phi)\in C^2(\mathbb{R})$ with $G_\varepsilon(\phi)\ge0$, $G_\varepsilon'(\varepsilon)<0$ and $G_\varepsilon'(1 - \varepsilon)>0$, then $G_\varepsilon(\phi)$ satisfy \cite{ElliotGarcke1996}

$$
G_\varepsilon(\phi)
\geq
\dis\frac{1}{2\varepsilon(1 - \varepsilon)} (\phi - \varepsilon)^2
\geq
\dis\frac{1}{2\varepsilon(1 - \varepsilon)} \phi^2
\quad\quad
\pato \phi\leq \varepsilon
$$
and 
$$
G_\varepsilon(\phi)
\geq
\dis\frac{1}{2\varepsilon(1 - \varepsilon)} (\phi - (1-\varepsilon))^2
\geq
\dis\frac{1}{2\varepsilon(1 - \varepsilon)} (\phi - 1)^2
\quad 
\pato \phi\geq 1-\varepsilon\,.
$$

\end{obs}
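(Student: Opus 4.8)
The plan is to read each assertion directly off the explicit piecewise formulas for $G_\varepsilon$, $G_\varepsilon'$, $G_\varepsilon''$ recorded in the preceding remark, reducing everything to elementary facts about the logarithmic entropy $G$, and assuming throughout that $\varepsilon<1/2$ (so that the middle branch $[\varepsilon,1-\varepsilon]$ is nonempty).

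First I would establish $G_\varepsilon\in C^2(\mathbb{R})$. On each outer branch $G_\varepsilon$ is by construction exactly the second-order Taylor polynomial of $G$ centered at $\varepsilon$ (respectively $1-\varepsilon$); hence the one-sided limits of $G_\varepsilon$, $G_\varepsilon'$ and $G_\varepsilon''$ at the two matching points coincide with $G(\varepsilon),G'(\varepsilon),G''(\varepsilon)$ (resp. the values at $1-\varepsilon$), while the inner branch is smooth and the outer branches are polynomials. The derivative signs then follow at once: $G_\varepsilon'(\varepsilon)=G'(\varepsilon)=\ln\!\big(\varepsilon/(1-\varepsilon)\big)<0$ and $G_\varepsilon'(1-\varepsilon)=G'(1-\varepsilon)=\ln\!\big((1-\varepsilon)/\varepsilon\big)>0$.

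The core of the argument is the quadratic lower bound. Using $G''(\varepsilon)=1/(\varepsilon(1-\varepsilon))$, the formula for the left branch gives, for $\phi\le\varepsilon$,
\[
G_\varepsilon(\phi)-\frac{1}{2\varepsilon(1-\varepsilon)}(\phi-\varepsilon)^2=G(\varepsilon)+G'(\varepsilon)(\phi-\varepsilon).
\]
I would then note that the right-hand side is nonnegative: $G(\varepsilon)>0$ because the entropy satisfies $G\ge 1-\ln 2>0$ on $(0,1)$ (its minimum occurring at $\phi=1/2$), while for $\phi\le\varepsilon$ the linear term $G'(\varepsilon)(\phi-\varepsilon)$ is a product of two nonpositive factors, hence $\ge0$. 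This simultaneously yields the first inequality and, since the subtracted quadratic is itself nonnegative, the positivity $G_\varepsilon\ge0$ on the left branch; on $[\varepsilon,1-\varepsilon]$ positivity is just $G\ge0$. The coarser bound is then the elementary identity $(\phi-\varepsilon)^2-\phi^2=\varepsilon(\varepsilon-2\phi)\ge0$, valid on $\phi\le0$. The estimates for $\phi\ge1-\varepsilon$ are obtained by the identical computation, now with $G'(1-\varepsilon)>0$ matching the sign of $\phi-(1-\varepsilon)\ge0$, and with $(\phi-(1-\varepsilon))^2-(\phi-1)^2=\varepsilon(2\phi-2+\varepsilon)\ge0$ for $\phi\ge1$.

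I do not anticipate a genuine obstacle; the only delicate point is that the two coarser bounds $\ge\frac{1}{2\varepsilon(1-\varepsilon)}\phi^2$ and $\ge\frac{1}{2\varepsilon(1-\varepsilon)}(\phi-1)^2$ actually hold on the sub-ranges $\phi\le0$ and $\phi\ge1$ rather than on all of $\phi\le\varepsilon$, $\phi\ge1-\varepsilon$. This is harmless, since those sub-ranges are exactly where the negative part $(\phi)_-$ and the excess $(\phi-1)_+$ are supported, and it is only there that the coarser bounds are invoked in the subsequent approximate-boundedness estimates.
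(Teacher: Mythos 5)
Your proof is correct. Note that the paper itself offers no argument for this remark --- it simply attributes the bounds to Elliott and Garcke --- so your direct verification from the explicit piecewise formulas is a self-contained substitute rather than a divergence from the paper's method. The identity $G_\varepsilon(\phi)-\frac{1}{2\varepsilon(1-\varepsilon)}(\phi-\varepsilon)^2=G(\varepsilon)+G'(\varepsilon)(\phi-\varepsilon)$, combined with $G\ge 1-\ln 2>0$ and the sign matching of $G'(\varepsilon)<0$ with $\phi-\varepsilon\le 0$, is exactly the right reduction, and it yields $G_\varepsilon\ge 0$ on the outer branches as a by-product. You are also right to flag the one genuine imprecision in the statement: the second inequality in each displayed chain, $(\phi-\varepsilon)^2\ge\phi^2$, fails for $\varepsilon/2<\phi\le\varepsilon$ (at $\phi=\varepsilon$ the left side vanishes while the right side is $\varepsilon^2$), so it holds only on $\phi\le\varepsilon/2$ --- in particular on $\phi\le 0$, which is the only range where Corollary~\ref{lem:Gapprxbounds} actually invokes it, since $(\phi_-)^2$ and $\big((\phi-1)_+\big)^2$ vanish elsewhere. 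Your restriction therefore repairs the overstated claim without affecting any downstream estimate. The standing assumption $\varepsilon<1/2$ that you make explicit is indeed needed for the sign claims $G_\varepsilon'(\varepsilon)<0$ and $G_\varepsilon'(1-\varepsilon)>0$ and is implicit throughout the paper.
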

\begin{lem}\label{lem:Gestimatemod}
Any solution of the problem \eqref{eq:CHsystemmod} and \eqref{eq:CHbc-bis} satisfies  the following relation a.e. $t>0$:
\beq\label{eq:Gestimatemod}
\int_\Omega G_\varepsilon(\phi(t)) \,d\x
+
\int_0^t \int_\Omega \left((\Delta\phi)^2
+
\frac3{\eta^2}\left(\phi - \frac12\right)^2 |\nabla\phi|^2 
\right) d\x\, d\tau
=
\int_\Omega G_\varepsilon(\phi_0) \,d\x
+ 
\frac1{4\eta^2}\int_0^t\int_\Omega|\nabla\phi|^2 d\x \,d\tau
\,.
\eeq
Moreover, using estimates in \eqref{estimH1bL1mod},  we can deduce 
$$
\left\| \int_\Omega G_\varepsilon(\phi(t,\cdot)) d\x\right\|_{L^\infty(0,T)}
+ \int_0^T\int_\Omega (\Delta\phi)^2 d\x dt
\leq
C\,\frac{T}{\eta^2}\,,
$$
where $C$ depends on the initial energy $E(\phi_0)$ and  $\int_\Omega G_\varepsilon(\phi_0) \,d\x$. 
\end{lem}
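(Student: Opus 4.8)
The plan is to test the first equation of \eqref{eq:CHsystemmod} by the entropy variable $G_\varepsilon'(\phi)$, exploiting the defining property \eqref{eq:G2trunc}, namely $G_\varepsilon''(\phi)=1/M_\varepsilon(\phi)$, in order to cancel the mobility factor. Since $M_\varepsilon(\phi)\ge M(\varepsilon)=\varepsilon(1-\varepsilon)>0$, the function $G_\varepsilon'$ is globally Lipschitz, so $G_\varepsilon'(\phi)\in L^\infty(0,T;H^1(\Omega))$ is an admissible test function whenever $\phi\in L^\infty(0,T;H^1(\Omega))$ (this is exactly where the $\varepsilon$-truncation is used, since $1/M_0$ blows up at the pure phases). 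Multiplying $\phi_t-\nabla\cdot(M_\varepsilon(\phi)\nabla\mu)=0$ by $G_\varepsilon'(\phi)$ and integrating by parts, the boundary term vanishes because $\partial_\n\mu|_{\partial\Omega}=0$, and using $\nabla\big(G_\varepsilon'(\phi)\big)=G_\varepsilon''(\phi)\nabla\phi=\tfrac{1}{M_\varepsilon(\phi)}\nabla\phi$ the mobility cancels exactly, leaving
\beq
\frac{d}{dt}\int_\Omega G_\varepsilon(\phi)\,d\x+\int_\Omega \nabla\mu\cdot\nabla\phi\,d\x=0 .
\eeq

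For the second term I would substitute $\mu=-\Delta\phi+F'(\phi)$ from the second equation of \eqref{eq:CHsystemmod}, so that $\nabla\mu\cdot\nabla\phi=-\nabla(\Delta\phi)\cdot\nabla\phi+F''(\phi)|\nabla\phi|^2$. Integrating the first contribution by parts and discarding the boundary term thanks to $\partial_\n\phi|_{\partial\Omega}=0$ produces $\int_\Omega(\Delta\phi)^2\,d\x$. For the second, I insert the convex splitting \eqref{eq:convexsplitting2}, writing $F''(\phi)=\tfrac{3}{\eta^2}\big(\phi-\tfrac12\big)^2-\tfrac{1}{4\eta^2}$, which yields precisely the nonnegative term $\tfrac{3}{\eta^2}(\phi-\tfrac12)^2|\nabla\phi|^2$ on the left and a term $\tfrac{1}{4\eta^2}|\nabla\phi|^2$ that is moved to the right-hand side. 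Integrating in time over $(0,t)$ then gives \eqref{eq:Gestimatemod}.

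For the closing bound, by Remark~\ref{rk:boundsG} we have $G_\varepsilon\ge0$, so all three terms on the left of \eqref{eq:Gestimatemod} are nonnegative; meanwhile the first estimate in \eqref{estimH1bL1mod} gives $\int_\Omega|\nabla\phi|^2\,d\x\le C$ uniformly in $t$, with $C$ controlled by $E(\phi_0)$, so the last term of \eqref{eq:Gestimatemod} is bounded by $\tfrac{C}{4\eta^2}\,t\le\tfrac{C}{4\eta^2}\,T$. Bounding $\int_\Omega G_\varepsilon(\phi(t))\,d\x$ by discarding the two nonnegative time integrals and taking the supremum over $t\in(0,T)$, and separately bounding $\int_0^T\int_\Omega(\Delta\phi)^2\,d\x\,dt$ by choosing $t=T$ and discarding the other two nonnegative terms, then adding the two resulting inequalities, yields the stated estimate with a constant depending only on $E(\phi_0)$ and $\int_\Omega G_\varepsilon(\phi_0)\,d\x$.

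The delicate point is rigor rather than the algebra: the regularity listed for \eqref{eq:CHsystemmod} is only $\phi\in L^\infty(0,T;H^1(\Omega))$ and $\mu\in L^2(0,T;H^1(\Omega))$, so a priori neither $\int_\Omega(\Delta\phi)^2\,d\x$ nor the pointwise identity $\mu=-\Delta\phi+F'(\phi)$ is available; in fact \eqref{eq:Gestimatemod} is exactly what upgrades $\phi$ to $L^2(0,T;H^2(\Omega))$. I would therefore carry out the computation on the Galerkin (or otherwise regularized) approximations used to construct the solution, where every manipulation is licit, and then pass to the limit, invoking weak lower semicontinuity of the quadratic terms $\int_0^t\int_\Omega(\Delta\phi)^2$ and $\int_0^t\int_\Omega\tfrac{3}{\eta^2}(\phi-\tfrac12)^2|\nabla\phi|^2$ together with the strong convergence needed to retain equality, as in \cite{ElliotGarcke1996}. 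A secondary technical point is the chain-rule identity $\langle\phi_t,G_\varepsilon'(\phi)\rangle=\tfrac{d}{dt}\int_\Omega G_\varepsilon(\phi)\,d\x$, which, since $\phi_t\in L^2(0,T;(H^1(\Omega))')$ and $G_\varepsilon'(\phi)\in L^\infty(0,T;H^1(\Omega))$, follows from the standard chain rule for the duality pairing.
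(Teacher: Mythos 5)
Your proposal is correct and follows essentially the same route as the paper: test the first equation by $G_\varepsilon'(\phi)$ so that $G_\varepsilon''=1/M_\varepsilon$ cancels the mobility, substitute $\mu=-\Delta\phi+F'(\phi)$, integrate by parts, and split $F''=F_c''+F_e''$ via \eqref{eq:convexsplitting2}. Your added remarks on justifying the computation at the level of the approximating solutions and on the chain rule for the duality pairing are consistent with the paper, which delegates those points to \cite{ElliotGarcke1996}.
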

\begin{proof}
Testing equation \eqref{eq:CHsystemmod}$_1$ by $G_\varepsilon'(\phi)$ and using that $M_\varepsilon(\phi)=1/G_\varepsilon''(\phi)$, we obtain
$$
0\,=\,
\dis\int_\Omega \phi_tG_\varepsilon'(\phi) d\x
+ \int_\Omega \frac{1}{G_\varepsilon''(\phi)}\nabla\mu\cdot\nabla G_\varepsilon'(\phi) d\x\,.
$$
Moreover, using the expression of $\mu$ in \eqref{eq:CHsystemmod}$_2$ and the splitting presented in \eqref{eq:convexsplitting1} we get
$$
\ba{rcl}
\dis
\int_\Omega \nabla\mu\cdot \nabla\phi \,d\x
&=&\dis
\int_\Omega \nabla\left(-\Delta\phi + F'(\phi)\right)\cdot \nabla\phi \,d\x
\\ \hueco
&=&\dis
\int_\Omega (\Delta\phi)^2 d\x
+\int_\Omega  F_c''(\phi)|\nabla\phi|^2 \,d\x
+\int_\Omega  F_e''(\phi)|\nabla\phi|^2 \,d\x\,.
\ea
$$
Integrating in time, taking into account the equality \cite{ElliotGarcke1996}
$$
\int_0^t \int_\Omega \phi_tG_\varepsilon'(\phi) d\x= \int_\Omega G_\varepsilon(\phi(t)) \,d\x - \int_\Omega G_\varepsilon(\phi(0)) \,d\x
\quad\mbox{ a.e. } t>0\,
$$
and using \eqref{eq:convexsplitting2} we arrive at \eqref{eq:Gestimatemod}. 

\end{proof}

\subsubsection{Functional $J_\varepsilon(\phi)$}

We define a new functional $J_\varepsilon(\phi)$ by using the truncated mobility $M_\varepsilon(\phi)$
such that:

\beq\label{eq:J2trunc}
J''_\varepsilon(\phi)
\,:=\,
\frac{1}{\sqrt{M_\varepsilon(\phi)}}
\quad \forall\, \phi\in \mathbb{R}\,.
\eeq

\begin{obs}
Denoting
$$
J(\phi)\,:=\,(-2\phi +1)\arcsin\big({\sqrt{1-\phi}}\big) +\sqrt{(1-\phi)\phi}
+ 2\arcsin({\sqrt{1/2}})\phi
\quad \forall\, \phi\in [0,1]\,,
$$
with
$$
J'(\phi)\,=\,-2(\arcsin({\sqrt{1-\phi}}) - \arcsin({\sqrt{1/2}}))
\quad \forall\, \phi\in [0,1]
$$
and
$$
J''(\phi)\,=\,\frac1{\sqrt{\phi(1-\phi)}}=\frac1{\sqrt{M(\phi)}}
\quad \forall\, \phi\in (0,1)\,,
$$
then the functionals $J''_\varepsilon(\phi)$ in \eqref{eq:J2trunc}, $J'_\varepsilon(\phi)$ and $J_\varepsilon(\phi)$ can be written as
$$
J'_\varepsilon(\phi)
\,:=\,
\left\{
\ba{ll}
\dis J'(\varepsilon) + J''(\varepsilon)(\phi-\varepsilon)
 & \mbox{ if } \phi < \varepsilon\,,
\\ \hueco
J'(\phi)
& \mbox{ if }  \varepsilon \le \phi \le 1-\varepsilon\,,
\\ \hueco
J'(1-\varepsilon) + J''(1-\varepsilon)(\phi-(1-\varepsilon))
& \mbox{ if } 1-\varepsilon < \phi\, .
\ea\right.
$$
and
$$
J_\varepsilon(\phi)
\,:=\,
\left\{
\ba{ll}
\dis J(\varepsilon)+ J'(\varepsilon) (\phi-\varepsilon)+ \frac12 J''(\varepsilon)(\phi-\varepsilon) ^2
 & \mbox{ if } \phi < \varepsilon\,,
\\ \hueco\dis J(\phi)
& \mbox{ if }  \varepsilon \le \phi \le 1-\varepsilon\,,
\\ \hueco
\dis J(1-\varepsilon)+ J'(1-\varepsilon) (\phi-(1-\varepsilon))+ \frac12 J''(1-\varepsilon)(\phi-(1-\varepsilon)) ^2
& \mbox{ if } 1-\varepsilon < \phi\,.
\ea\right.
$$
\end{obs}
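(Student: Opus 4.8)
The plan is to treat this Remark as the assertion that the displayed piecewise functions for $J'_\varepsilon$ and $J_\varepsilon$ are genuinely the first and second antiderivatives of $J''_\varepsilon(\phi)=1/\sqrt{M_\varepsilon(\phi)}$ fixed in \eqref{eq:J2trunc}, exactly as was done for $G_\varepsilon$ in the preceding Remark. Since the formulas are already provided, the cleanest route is verification rather than derivation. The argument splits into two parts: (i) confirming the closed forms of $J,J',J''$ on the central interval $[\varepsilon,1-\varepsilon]$, where $M_\varepsilon(\phi)=M(\phi)=\phi(1-\phi)$; and (ii) confirming that the pieces for $\phi<\varepsilon$ and $\phi>1-\varepsilon$ are the correct $C^2$ extensions.

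For part (i) I would differentiate the proposed $J(\phi)$ twice on $(0,1)$ and check. Differentiating $(-2\phi+1)\arcsin(\sqrt{1-\phi})$ produces $-2\arcsin(\sqrt{1-\phi})$ together with the algebraic term $\frac{2\phi-1}{2\sqrt{\phi(1-\phi)}}$, using $\frac{d}{d\phi}\arcsin(\sqrt{1-\phi})=-\frac{1}{2\sqrt{\phi(1-\phi)}}$; meanwhile $\frac{d}{d\phi}\sqrt{(1-\phi)\phi}=\frac{1-2\phi}{2\sqrt{\phi(1-\phi)}}$ contributes the opposite algebraic term, and $\frac{d}{d\phi}\big(2\arcsin(\sqrt{1/2})\,\phi\big)=2\arcsin(\sqrt{1/2})$. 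The two square-root fractions cancel, leaving exactly $J'(\phi)=-2\big(\arcsin(\sqrt{1-\phi})-\arcsin(\sqrt{1/2})\big)$; here the constant $2\arcsin(\sqrt{1/2})$ is precisely the normalization enforcing $J'(1/2)=0$. Differentiating once more gives $J''(\phi)=1/\sqrt{\phi(1-\phi)}=1/\sqrt{M(\phi)}$, as claimed. (Equivalently one could derive rather than verify, by integrating $1/\sqrt{1/4-(\phi-1/2)^2}$ to obtain the $\arcsin$ primitive and then integrating by parts, but the verification is shorter.)

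For part (ii) I would exploit that $M_\varepsilon$ is \emph{constant} on each outer region. For $\phi<\varepsilon$ one has $M_\varepsilon(\phi)=M(\varepsilon)=\varepsilon(1-\varepsilon)$, hence $J''_\varepsilon(\phi)\equiv J''(\varepsilon)$; integrating this constant twice and matching value and slope at the junction $\phi=\varepsilon$ forces $J'_\varepsilon$ to be the first-order, and $J_\varepsilon$ the second-order, Taylor polynomial of $J$ centred at $\varepsilon$, which are exactly the displayed expressions. The region $\phi>1-\varepsilon$ is identical with $\varepsilon$ replaced by $1-\varepsilon$. The resulting $J_\varepsilon$ lies in $C^2(\mathbb{R})$: the Taylor construction makes $J_\varepsilon$ and $J'_\varepsilon$ continuous at the junctions by design, and $J''_\varepsilon$ is continuous there because $M_\varepsilon$ is.

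The only genuinely non-routine step is the central-interval closed form, i.e.\ recognising that $(-2\phi+1)\arcsin(\sqrt{1-\phi})+\sqrt{(1-\phi)\phi}$ is a primitive whose derivative collapses to a pure $\arcsin$ after the square-root terms cancel. Everything afterwards is bookkeeping: once $J''$ is confirmed, the outer pieces follow from integrating a constant twice, and the $C^2$ gluing is automatic, mirroring the $G_\varepsilon$ argument verbatim.
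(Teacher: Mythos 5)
Your verification is correct and is essentially the argument the paper leaves implicit (the Remark is stated without proof, the formulas being presented as the natural $C^2$ truncation of the second antiderivative of $1/\sqrt{M(\phi)}$, in exact parallel with the $G_\varepsilon$ case). The differentiation of the central closed form, the cancellation of the two square-root terms, and the identification of the outer pieces as quadratic Taylor extensions matching value, slope and (constant) second derivative at $\phi=\varepsilon$ and $\phi=1-\varepsilon$ are all accurate.
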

\begin{obs}\label{rk:boundsJ}

Functional $J_\varepsilon(\phi)\in C^2(\mathbb{R})$ with $J_\varepsilon(\phi)\ge0$, $J_\varepsilon'(\varepsilon)<0$ and $J_\varepsilon'(1 - \varepsilon)>0$, then $J_\varepsilon(\phi)$ satisfy \cite{ElliotGarcke1996}

$$
J_\varepsilon(\phi)
\geq
\dis\frac{1}{2\sqrt{\varepsilon(1 - \varepsilon)}} (\phi - \varepsilon)^2
\geq
\dis\frac{1}{2\sqrt{\varepsilon(1 - \varepsilon)}} \phi^2
\quad\quad
\pato \phi\leq \varepsilon
$$
and 
$$
J_\varepsilon(\phi)
\geq
\dis\frac{1}{2\sqrt{\varepsilon(1 - \varepsilon)}} (\phi - (1-\varepsilon))^2
\geq
\dis\frac{1}{2\sqrt{\varepsilon(1 - \varepsilon)}} (\phi - 1)^2
\quad 
\pato \phi\geq 1-\varepsilon\,.
$$
%
%
%
%
%

\end{obs}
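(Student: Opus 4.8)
The plan is to mirror the argument behind Remark~\ref{rk:boundsG}, replacing $1/M_\varepsilon$ by $1/\sqrt{M_\varepsilon}$ throughout. First I would record the structural facts asserted in the statement. Since $M_\varepsilon(\phi)>0$ is continuous, $J_\varepsilon''=1/\sqrt{M_\varepsilon}$ is continuous and strictly positive; the piecewise definitions of $J_\varepsilon'$ and $J_\varepsilon$ in the preceding remark are exactly the first/second order Taylor matchings of the central branch $J$ at $\phi=\varepsilon$ and $\phi=1-\varepsilon$, so $J_\varepsilon$, $J_\varepsilon'$ and $J_\varepsilon''$ agree from both sides at those junction points and $J_\varepsilon\in C^2(\mathbb{R})$.

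Next I would establish the nonnegativity of $J_\varepsilon$ and the signs of $J_\varepsilon'$ at the truncation points, which are the only genuinely function-specific inputs. Because $J_\varepsilon''>0$ everywhere, $J_\varepsilon$ is strictly convex, hence attains its global minimum at the unique point where $J_\varepsilon'=0$. From the explicit formula $J'(\phi)=-2\big(\arcsin(\sqrt{1-\phi})-\arcsin(\sqrt{1/2})\big)$ this point is $\phi=1/2\in[\varepsilon,1-\varepsilon]$, with value $J_\varepsilon(1/2)=J(1/2)=\tfrac12+\arcsin(\sqrt{1/2})>0$; thus $J_\varepsilon\ge0$. The same formula, together with the strict monotonicity of $\arcsin$, gives $J_\varepsilon'(\varepsilon)=J'(\varepsilon)<0$ (since $\sqrt{1-\varepsilon}>\sqrt{1/2}$ for $\varepsilon<1/2$) and $J_\varepsilon'(1-\varepsilon)=J'(1-\varepsilon)>0$ (since $\sqrt{\varepsilon}<\sqrt{1/2}$).

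With these in hand, the two chains follow by elementary convex-quadratic estimates. On $\{\phi\le\varepsilon\}$ the mobility is frozen at $M(\varepsilon)=\varepsilon(1-\varepsilon)$, so there $J_\varepsilon$ coincides with the quadratic $J(\varepsilon)+J'(\varepsilon)(\phi-\varepsilon)+\tfrac{1}{2\sqrt{\varepsilon(1-\varepsilon)}}(\phi-\varepsilon)^2$. Subtracting the quadratic term, the first inequality reduces to $J(\varepsilon)+J'(\varepsilon)(\phi-\varepsilon)\ge0$, which holds because $J(\varepsilon)\ge0$ and $J'(\varepsilon)<0$ multiplies $\phi-\varepsilon\le0$. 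The second inequality is the elementary comparison $(\phi-\varepsilon)^2\ge\phi^2$, valid on the range $\phi\le0$ that actually controls the negative part $(\phi)_-$. The bounds near $1-\varepsilon$ are obtained symmetrically: on $\{\phi\ge1-\varepsilon\}$ the mobility is frozen at $M(1-\varepsilon)$, and now $J'(1-\varepsilon)>0$ multiplies $\phi-(1-\varepsilon)\ge0$, while $\big(\phi-(1-\varepsilon)\big)^2\ge(\phi-1)^2$ on the relevant range $\phi\ge1$.

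I expect the main (and only mildly delicate) obstacle to be the function-specific verifications of the second step: confirming that the stated closed form for $J$ is genuinely a primitive with $J''=1/\sqrt{\phi(1-\phi)}$ and that $J\ge0$, since these require handling the $\arcsin$ expressions rather than purely structural convexity. Everything else is identical in spirit to the $G_\varepsilon$ case treated in Remark~\ref{rk:boundsG}, with $1/\sqrt{M_\varepsilon}$ in place of $1/M_\varepsilon$ and $\tfrac{1}{2\sqrt{\varepsilon(1-\varepsilon)}}$ in place of $\tfrac{1}{2\varepsilon(1-\varepsilon)}$.
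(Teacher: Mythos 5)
Your argument is correct, and in fact the paper offers no proof of this remark at all (it is stated with a citation to Elliott--Garcke), so your write-up supplies exactly the details one would expect: $C^2$ regularity from the Taylor matching at $\phi=\varepsilon$ and $\phi=1-\varepsilon$, nonnegativity from strict convexity plus the explicit minimum $J(1/2)=\tfrac12+\arcsin(\sqrt{1/2})>0$, the signs of $J'(\varepsilon)$ and $J'(1-\varepsilon)$ from monotonicity of $\arcsin$, and the quadratic lower bounds from dropping the nonnegative affine part $J(\varepsilon)+J'(\varepsilon)(\phi-\varepsilon)$ on the frozen branch. Your parenthetical caveat is also well taken: the comparison $(\phi-\varepsilon)^2\ge\phi^2$ fails for $\varepsilon/2<\phi\le\varepsilon$, so the last link of the chain as literally printed only holds for $\phi\le 0$ (and symmetrically $\phi\ge 1$), which is precisely the range used in Corollary~\ref{lem:Japprxbounds} where the bound is applied to $(\phi_-)^2$ and $\big((\phi-1)_+\big)^2$; for the intermediate range one instead invokes $J_\varepsilon\ge 0$, exactly as the paper does in the proof of Corollary~\ref{lem:Gapprxbounds}. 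No gaps.
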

\begin{lem}\label{lem:Jestimatemod}
Any solution of the problem \eqref{eq:CHsystemmod} and \eqref{eq:CHbc-bis} satisfies the following inequality a.e. $t>0$:
\beq\label{eq:Jestimatemod}
\int_\Omega J_\varepsilon(\phi(t)) \,d\x
\leq
\int_\Omega J_\varepsilon(\phi_0) \,d\x
+ 
\int_0^t\int_\Omega\left( 
M_\varepsilon(\phi)|\nabla\mu|^2
+
\int_\Omega |\nabla\phi|^2\right) d\x
\, d\tau\,.
\eeq
Moreover, using estimates in \eqref{estimH1bL1mod} we can deduce 
$$
\left(\int_\Omega J_\varepsilon(\phi(t)) \,d\x\right)
\leq
C_1+C_2 t\,,
\quad \hbox{a.e. $t>0$,} 
$$
with $C_1$ and $C_2$ are independent of $\eta$ (and depend on the initial energy $E(\phi_0)$ and $\int_\Omega J_\varepsilon(\phi_0) \,d\x$). Therefore we can also deduce
$$
\left\| \int_\Omega J_\varepsilon(\phi(t,\cdot)) d\x\right\|_{L^\infty(0,T)}
\leq
\Big(C_1 + C_2 T\Big)\,.
$$ 
\end{lem}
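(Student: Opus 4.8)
The plan is to mirror the proof of Lemma~\ref{lem:Gestimatemod}, but to exploit the relation $J_\varepsilon''=1/\sqrt{M_\varepsilon}$ from \eqref{eq:J2trunc} in place of $G_\varepsilon''=1/M_\varepsilon$, so that the mobility is only \emph{partially} cancelled and no derivative of the potential $F$ (hence no factor $1/\eta^2$) ever enters the estimate. First I would test the mass equation \eqref{eq:CHsystemmod}$_1$ against $J_\varepsilon'(\phi)$ and integrate by parts in the diffusion term. The Neumann condition $\partial_\n\mu=0$ from \eqref{eq:CHbc-bis} annihilates the boundary contribution, leaving
$$
\int_\Omega \phi_t\,J_\varepsilon'(\phi)\,d\x + \int_\Omega M_\varepsilon(\phi)\,\nabla\mu\cdot\nabla J_\varepsilon'(\phi)\,d\x = 0.
$$
Using the chain rule $\nabla J_\varepsilon'(\phi)=J_\varepsilon''(\phi)\nabla\phi$ together with $M_\varepsilon(\phi)J_\varepsilon''(\phi)=M_\varepsilon(\phi)/\sqrt{M_\varepsilon(\phi)}=\sqrt{M_\varepsilon(\phi)}$, the second integrand collapses to $\sqrt{M_\varepsilon(\phi)}\,\nabla\mu\cdot\nabla\phi$. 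This is the decisive point: unlike the $G_\varepsilon$ case, where the mobility cancels completely and one is forced to substitute $\mu=-\Delta\phi+F'(\phi)$ and then absorb the concave part $F_e''=-1/(4\eta^2)$ onto the right-hand side, here a square root of the mobility survives and no substitution of $\mu$ is needed.

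Next I would integrate in time and invoke the antiderivative identity $\int_0^t\int_\Omega \phi_t J_\varepsilon'(\phi)\,d\x\,d\tau=\int_\Omega J_\varepsilon(\phi(t))\,d\x-\int_\Omega J_\varepsilon(\phi_0)\,d\x$, exactly as in the cited argument of \cite{ElliotGarcke1996}. It then remains only to control the cross term, which I would do by Young's inequality,
$$
\Big|\sqrt{M_\varepsilon(\phi)}\,\nabla\mu\cdot\nabla\phi\Big|\le \tfrac12 M_\varepsilon(\phi)|\nabla\mu|^2+\tfrac12|\nabla\phi|^2,
$$
yielding \eqref{eq:Jestimatemod} (in fact with the sharper constant $1/2$).

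For the \textbf{moreover} part I would simply insert the two bounds of \eqref{estimH1bL1mod}: the term $\int_\Omega M_\varepsilon(\phi)|\nabla\mu|^2$ belongs to $L^1(0,+\infty)$, so its time integral contributes a constant $C_1$, while $\int_\Omega|\nabla\phi|^2$ belongs to $L^\infty(0,+\infty)$, so its time integral contributes $C_2\,t$; adding $\int_\Omega J_\varepsilon(\phi_0)\,d\x$ into $C_1$ gives the affine bound, and taking the essential supremum over $(0,T)$ gives the stated $L^\infty$ estimate. Crucially, because $F$ never appeared in the computation, these constants carry no explicit power of $1/\eta$ (the $\eta$-dependence enters only through $E(\phi_0)$), which is precisely the gain of this lemma over Lemma~\ref{lem:Gestimatemod}.

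The main obstacle is not the algebra but the justification of the test-function computation at the available regularity ($\phi\in L^\infty(0,T;H^1(\Omega))$, $\phi_t\in L^2(0,T;(H^1(\Omega))')$, $\mu\in L^2(0,T;H^1(\Omega))$). One must verify that $J_\varepsilon'(\phi)$ is an admissible test function in $H^1(\Omega)$ and that the duality pairing $\langle\phi_t,J_\varepsilon'(\phi)\rangle$ integrates in time to the difference of $J_\varepsilon$-energies. This is legitimate because $M_\varepsilon(\phi)\ge M(\varepsilon)=\varepsilon(1-\varepsilon)>0$ forces $J_\varepsilon''=1/\sqrt{M_\varepsilon}$ to be bounded, so $J_\varepsilon'$ is globally Lipschitz with at most linear growth; hence $J_\varepsilon'(\phi)\in L^\infty(0,T;H^1(\Omega))$ and the chain-rule identity follows by the density/approximation argument of \cite{ElliotGarcke1996}. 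Once this regularity bookkeeping is in place, the remaining steps are elementary.
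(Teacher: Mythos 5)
Your proposal is correct and follows essentially the same route as the paper: test \eqref{eq:CHsystemmod}$_1$ by $J_\varepsilon'(\phi)$, use $M_\varepsilon(\phi)J_\varepsilon''(\phi)=\sqrt{M_\varepsilon(\phi)}$ to reduce the diffusion term to $\sqrt{M_\varepsilon(\phi)}\,\nabla\mu\cdot\nabla\phi$, integrate in time, and close with Young's inequality, then feed in \eqref{estimH1bL1mod} for the affine-in-$t$ bound. Your added remarks on the admissibility of $J_\varepsilon'(\phi)$ as a test function and the sharper constant $1/2$ are consistent with, and slightly more careful than, the paper's two-line argument.
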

\begin{proof}
Testing \eqref{eq:CHsystemmod}$_1$ by $J_\varepsilon'(\phi)$ and using \eqref{eq:J2trunc} we obtain
$$
\int_\Omega J_\varepsilon'(\phi)\phi_t \,d\x
+ \int_\Omega \sqrt{M_\varepsilon(\phi)}\nabla\mu\cdot\nabla\phi \,d\x
=
0\,.
$$
Integrating in time,
 and using $ab\leq {a^2}/2 +{b^2}/2$, we can deduce \eqref{eq:Jestimatemod}.

\end{proof}
\section{Two fully discrete numerical schemes}\label{sec:schemes}
The numerical schemes proposed in this section are designed as approximations of the corresponding weak formulation of the system \eqref{eq:CHsystemmod}. Hereafter $(\cdot,\cdot)$ denotes the $L^2(\Omega)$-scalar product.
For all numerical schemes we consider a partition of the time interval $[0,T]$ into $N$ subintervals, with constant time step $\Delta t = T /N$ and we denote by $\delta_t$ the (backward) discrete time derivative
$$
\delta_t f\,:=\,\frac{f^{n+1} - f^n}{\Delta t}\,.
$$
We consider structured triangulations $\{\mathcal{T}_h\}$ of the domain $\Omega$ with its elements denoted by $I_i$, that is $\{\mathcal{T}_h\}=\bigcup_{i} I_i$ with the size of elements being bounded  by $h>0$. The unknowns $(\phi,\mu)$ are approximated by the $C^0$-Finite Elements spaces of order $k\ge1$ (denoted by $\mathbb{P}_k$):
$$
(\phi,\mu)\in\Phi_h\times W_h=\mathbb{P}_1\times\mathbb{P}_k\,.
$$ 
Moreover, we need to use \textit{mass-lumping} ideas \cite{CiarletRaviart73} to help us achieve boundedness of the unknown $\phi$. To this end we introduce the discrete semi-inner product on $C^0(\overline{\Omega})$ and its induced discrete semi-norm:
$$
(f,g)_h\,:=\,\int_\Omega I_h(f g)d\x
\quad
\mbox{ and }
\quad
|f|_h\,:=\,\sqrt{(f,f)_h}\,,
$$
with $I_h(f(\x))$ denoting the nodal $\mathbb{P}_1$-interpolation of the function $f(\x)$. We use $f_+$ or $f_-$ to denote the positive or negative part of a function $f$ ($f_+(\x)=\max\{f(\x),0\}$ and $f_-(\x)=\min\{f(\x),0\}$). 

We propose two numerical schemes,  called \textit{G$_\varepsilon$-scheme} and \textit{J$_\varepsilon$-scheme}, designed to take advantage of the discrete versions of the estimates for $G_\varepsilon(\phi)$ \eqref{eq:Gestimatemod} 
 and $J_\varepsilon(\phi)$ \eqref{eq:Jestimatemod}, respectively. Both schemes will be \textit{conservative} and \textit{energy stable}, that is, they will maintain the amount of phase  constant in time ($\int_\Omega\phi^{n+1}=\int_\Omega\phi^{n}$) and they will satisfy discrete versions of the energy law \eqref{eq:Energylawmod}, which in particular imply the energy decreasing property ($E(\phi^{n+1})\le E(\phi^{n})$). In fact, the key point to achieve the energy stability of the schemes is to take advantage of the splitting in \eqref{eq:convexsplitting1} and to consider the ideas of Eyre \cite{Eyre}, i.e., take implicitly the convex part and explicitly the concave part because the following relation holds:
\beq\label{eq:eyre}
\frac1{\Delta t}\int_\Omega \Big(F'_c(\phi^{n+1}) + F'_e(\phi^{n})\Big)(\phi^{n+1} - \phi^n) d\x
\,\geq\,\frac1{\Delta t}\int_\Omega \Big(F(\phi^{n+1}) - F(\phi^{n})\Big) d\x \,.
\eeq
\subsection{G$_\varepsilon$-scheme}
The \textit{G$_\varepsilon$-scheme} is defined as follows: Given $\phi^{n} \in \Phi_h$, find $(\phi^{n+1},\mu^{n+1}) \in \Phi_h\times M_h$ such that
\beq\label{eq:SchemeG}
\left\{\ba{rcl}\dis
\frac1{\Delta t}\big(\phi^{n+1} - \phi^n, \bar\mu\big)_h 
+\big(M^G_\varepsilon(\phi ^{n+1})\nabla\mu^{n+1},\nabla\bar\mu\big) 
&=&0\,,
\\ \hueco
(\nabla\phi^{n+1},\nabla\bar\phi)
+ \big(I_h(F_c'(\phi^{n+1})) + I_h(F_e'(\phi^{n})),\bar\phi\big)_h
&=&
(\mu^{n+1},\bar\phi)_h\,,
\ea\right.
\eeq
for all $(\bar\phi,\bar\mu)\in\Phi_h\times M_h$, 
where $M^G_\varepsilon(\phi)$ for any $\phi \in\Phi_h=\mathbb{P}_1$ will be an adequate $\mathbb{P}_0$ approximation of 
$M_\varepsilon(\phi)=1/G''_\varepsilon(\phi)$ (owing to \eqref{eq:G2trunc}), satisfying the $\mathbb{P}_0$ discrete equality
\beq\label{eq:MG}
M^G_\varepsilon(\phi)\,\nabla I_h \Big(G'_\varepsilon(\phi)\Big) = \nabla \phi
\quad \forall\, \phi \in\Phi_h \,.
\eeq
Indeed, $M^G_\varepsilon(\phi)$ for any $\phi\in \Phi_h$ will be a diagonal matrix function, denoting its diagonal  elements $M^G_{kk}$ for $k=1,\dots,d$. 
For each element $I_i$ of the triangulation $\{\mathcal{T}_h\}$ (with nodes $\x_{0}$, $\x_k$ in the $k$-th axis) we compute the $\mathbb{P}_0$ function
\beq\label{eq:defdiscreteMobilityG}
M^G_{kk}\Big|_{I_i} 
\,:=\,
\left\{\ba{cc}\dis
\frac{ \phi(\x_k) - \phi(\x_{0}) }
{ G'_\varepsilon(\phi(\x_k))  -  G'_\varepsilon(\phi(\x_{0}))}
& 
 \mbox{ if } \phi(\x_{0}) \neq \phi(\x_k)\,,
\\ \hueco
\dis\frac1{G''_{\varepsilon}(\phi(\x_{0}))} 
& 
\mbox{ if } \phi(\x_{0}) = \phi(\x_k)\,.
\ea\right.
\eeq
In fact, since $\Phi_h=\mathbb{P}_1$, the relation \eqref{eq:MG} holds as a $\mathbb{P}_0$ equality. Note that $M^G_{kk}\Big|_{I_i}\ge 0$ owing to the convexity of $G_\varepsilon(\phi)$.
\begin{obs}
$M^G_\varepsilon(\phi)$ can be understood as a non-centered modification of the mobility  $M_\varepsilon(\phi)$ in such a way that both
practically coincide when $\phi$ is located around the center $\phi=1/2$ of the interval $(0,1)$ but they differ when the values of $\phi$ are close to $0$ and $1$ (see  Figure~\ref{fig:ComparionsMgMjM}).  We observe how this  non-centered mobility  $M^G_\varepsilon(\phi)$ is going to zero at the endpoints $\phi=0$ and $\phi=1$ faster than $M_\varepsilon(\phi)$, as $\varepsilon $ goes to zero, which will help to control the boundedness of the unknown $\phi$ between $0$ and $1$ in the numerical scheme.
\begin{figure}
\begin{center}
\includegraphics[scale=0.115]{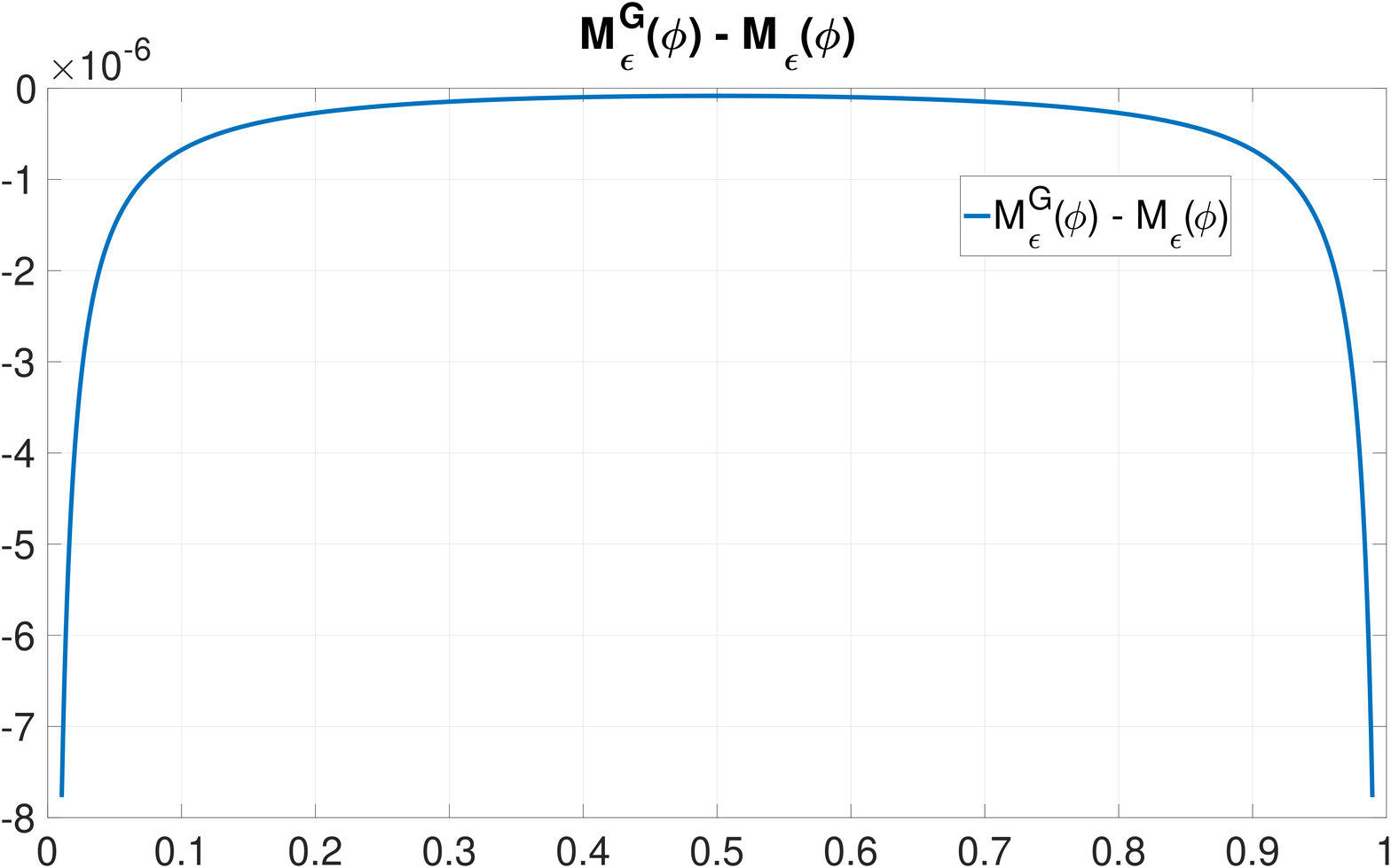}
\includegraphics[scale=0.115]{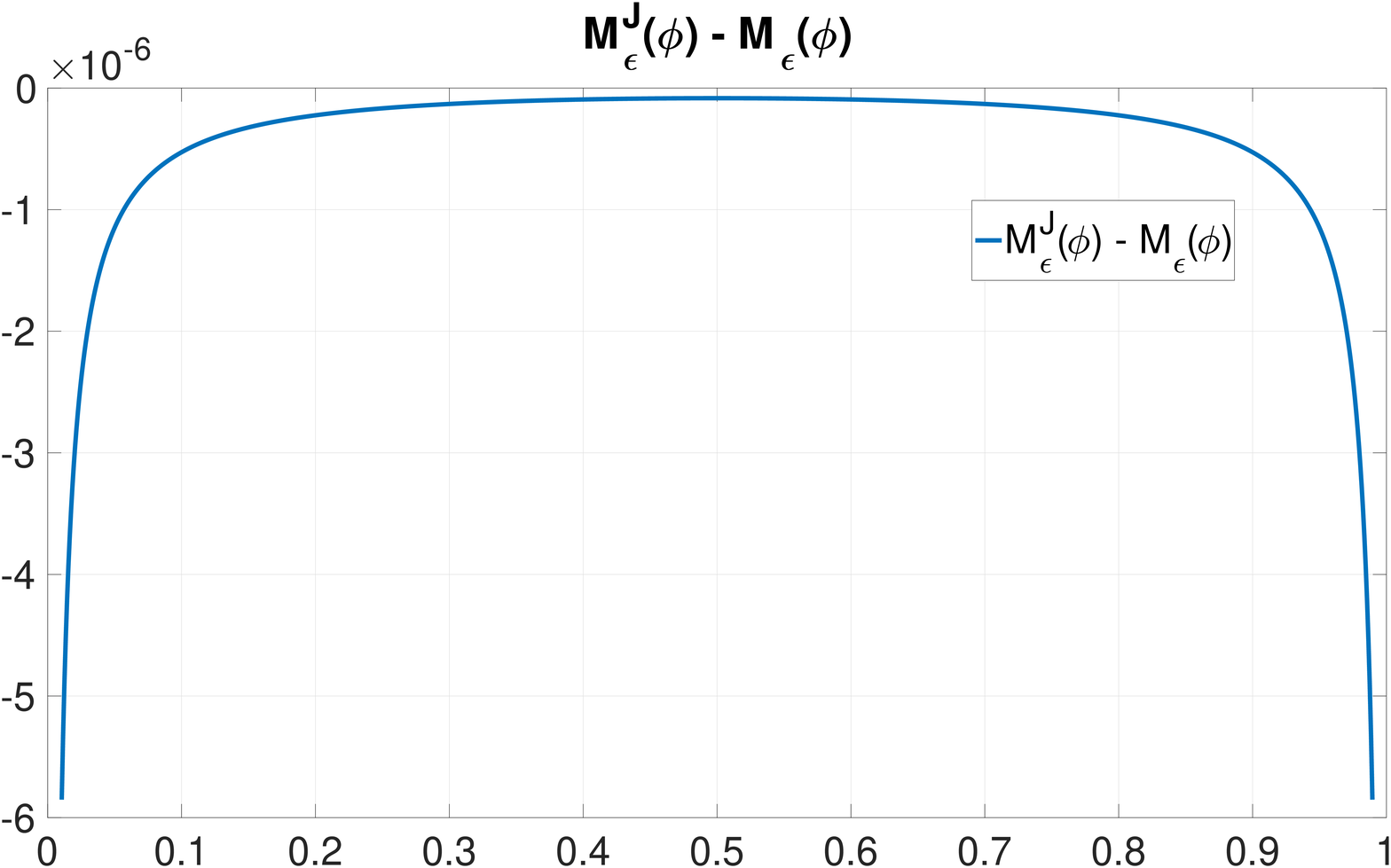}
\end{center}
\caption{Comparison of $M_\varepsilon(\phi)$ with $M^G_\varepsilon(\phi)$ (left) and $M^J_\varepsilon(\phi)$ (right) by plotting their difference in the interval $\phi\in [0.01,0.99]$.}\label{fig:ComparionsMgMjM}
\end{figure}
\end{obs}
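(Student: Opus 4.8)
The plan is to recognize the element-wise quotient $M^G_\varepsilon$ defined in \eqref{eq:defdiscreteMobilityG} as a \emph{harmonic mean} of the pointwise mobility $M_\varepsilon$, and then to read off the three asserted features --- coincidence near $\phi=1/2$, discrepancy near $\phi\in\{0,1\}$, and faster decay of $M^G_\varepsilon$ towards the endpoints as $\varepsilon\to 0$ --- from the convexity of $1/M_\varepsilon$ together with the (near-)logarithmic growth of $G'_\varepsilon$. The backbone of the whole argument is the following identity: using $G''_\varepsilon=1/M_\varepsilon$ from \eqref{eq:G2trunc} and the fundamental theorem of calculus, for two distinct nodal values $a<b$ one has $G'_\varepsilon(b)-G'_\varepsilon(a)=\int_a^b M_\varepsilon(s)^{-1}\,ds$, so that
\[
M^G_\varepsilon(a,b)=\frac{b-a}{G'_\varepsilon(b)-G'_\varepsilon(a)}=\left(\frac{1}{b-a}\int_a^b \frac{ds}{M_\varepsilon(s)}\right)^{-1},
\]
i.e. the harmonic mean of $M_\varepsilon$ over $[a,b]$. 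To compare $M^G_\varepsilon$ and $M_\varepsilon$ as functions of a single $\phi$ (as in Figure~\ref{fig:ComparionsMgMjM}) I would adopt the centered convention $a=\phi-\delta$, $b=\phi+\delta$ with $\delta>0$ a nodal spacing, the limit $\delta\to 0$ recovering $M_\varepsilon(\phi)$ by the mean value theorem; the conclusions are robust to the precise convention.

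For the coincidence near the center I would first record that on the smooth branch $[\varepsilon,1-\varepsilon]$ one has $1/M_\varepsilon(\phi)=1/\phi+1/(1-\phi)$, a sum of convex functions, hence convex there (and trivially convex on the constant tails). Jensen's inequality applied to a centered element lying inside $[\varepsilon,1-\varepsilon]$ then yields $\frac{1}{2\delta}\int_{\phi-\delta}^{\phi+\delta}M_\varepsilon^{-1}\ge M_\varepsilon(\phi)^{-1}$, i.e. $M^G_\varepsilon(\phi)\le M_\varepsilon(\phi)$, so the difference plotted in Figure~\ref{fig:ComparionsMgMjM} is nonnegative and the task reduces to estimating Jensen's gap. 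Since $M(\phi)=\phi(1-\phi)$ satisfies $M'(1/2)=0$, the reciprocal $1/M$ has vanishing first derivative at $\phi=1/2$, and a Taylor expansion shows the gap is $O\!\left(\delta^2\,(1/M_\varepsilon)''(\phi)\,M_\varepsilon(\phi)^2\right)$, which stays uniformly small and $\varepsilon$-independent on any fixed neighborhood of $1/2$ (no truncation being active there for small $\varepsilon$). This is the precise content of ``practically coincide around $\phi=1/2$''.

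For the endpoint behaviour the decisive mechanism is the near-singular integral rather than Jensen. Consider $\phi$ near $0$: an element straddling the truncation point, $a<\varepsilon<b$, picks up $\int_\varepsilon^b M_\varepsilon^{-1}\,ds=\ln\frac{b(1-\varepsilon)}{\varepsilon(1-b)}\sim |\ln\varepsilon|\to+\infty$ as $\varepsilon\to 0$, so the denominator of $M^G_\varepsilon$ diverges and $M^G_\varepsilon\to 0$ like $1/|\ln\varepsilon|$, while the pointwise value $M_\varepsilon(\phi)\approx\phi$ at a center above $\varepsilon$ stays bounded below; equivalently, for an interior element with $a=\phi-\delta\downarrow 0$ one has $\int_{\phi-\delta}^{\phi+\delta}M_\varepsilon^{-1}\approx \ln\frac{\phi+\delta}{\phi-\delta}\to+\infty$, again forcing $M^G_\varepsilon\to 0$ faster than $M_\varepsilon\approx\phi$. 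As $\varepsilon\to 0$ the convex branch $1/(\phi(1-\phi))$ extends ever closer to the endpoint and its rise at $\phi=\varepsilon$ steepens, amplifying this suppression; the same argument at $\phi=1$ follows by the symmetry $\phi\mapsto 1-\phi$. I expect the \textbf{main obstacle} to be exactly this last step: the truncation \eqref{eq:defMeps} introduces \emph{concave} kinks of $1/M_\varepsilon$ at $\phi=\varepsilon$ and $\phi=1-\varepsilon$ (the slope jumps downward there), so global convexity fails and the clean Jensen/Taylor estimates of the interior cannot be applied verbatim across the truncation. Turning the qualitative ``faster as $\varepsilon\to 0$'' into a clean inequality therefore requires tracking how the secant straddles that kink and estimating the logarithmic integral directly --- which is, fittingly, the same near-singular behaviour of $G'_\varepsilon$ that makes $G_\varepsilon$ control the boundedness of $\phi$.
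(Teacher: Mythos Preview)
Your analysis is correct and goes well beyond what the paper does. In the paper this statement is a \emph{Remark}: it is not proved at all, but merely asserted on the basis of the numerical plot in Figure~\ref{fig:ComparionsMgMjM}. No harmonic-mean identity, no Jensen estimate, no logarithmic asymptotics appear anywhere; the authors simply compute $M_\varepsilon-M^G_\varepsilon$ on a grid and display the resulting curve.

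Your route is therefore genuinely different and considerably more informative. The key step --- rewriting the element-wise quotient \eqref{eq:defdiscreteMobilityG} via $G'_\varepsilon(b)-G'_\varepsilon(a)=\int_a^b M_\varepsilon^{-1}$ as the harmonic mean of $M_\varepsilon$ over the element --- is exactly the right structural observation, and it immediately explains both the sign of the discrepancy (harmonic $\le$ arithmetic mean, hence $M^G_\varepsilon\le M_\varepsilon$ wherever $1/M_\varepsilon$ is convex) and the smallness of the Jensen gap near $\phi=1/2$ where $(1/M)'$ vanishes. Your endpoint analysis via the near-singular integral $\int_\varepsilon^b 1/(s(1-s))\,ds\sim|\ln\varepsilon|$ is also the correct mechanism and makes precise what the paper only gestures at with ``faster as $\varepsilon\to 0$''. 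You are right to flag the concave kinks of $1/M_\varepsilon$ at $\phi=\varepsilon,\,1-\varepsilon$ as the one place where the clean convexity argument breaks; your workaround (direct estimation of the logarithmic integral across the kink rather than Jensen) is the appropriate fix. What the paper's approach buys is brevity and a picture; what yours buys is an actual explanation --- in particular, the harmonic-mean viewpoint would let one quantify the discrepancy in terms of $h$ and $\varepsilon$, which the figure cannot.
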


\begin{obs}
In order to be able to construct the matrix $M^G_\varepsilon(\phi)$ we need to use the requirement of considering structured triangulations of $\Omega$.
\end{obs}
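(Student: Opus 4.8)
The plan is to establish what the Remark asserts by showing that the diagonal $\mathbb{P}_0$ mobility defined elementwise in \eqref{eq:defdiscreteMobilityG} is well defined, nonnegative, and satisfies the $\mathbb{P}_0$ identity \eqref{eq:MG} \emph{precisely} on elements whose edges are aligned with the coordinate axes, and that the very formula \eqref{eq:defdiscreteMobilityG} presupposes such an alignment, i.e.\ that $\{\mathcal{T}_h\}$ be structured. First I would fix an element $I_i$ and note that, since $\phi\in\Phi_h=\mathbb{P}_1$ and $I_h(G'_\varepsilon(\phi))\in\mathbb{P}_1$, both gradients $\nabla\phi$ and $\nabla I_h(G'_\varepsilon(\phi))$ are constant vectors on $I_i$. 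Relation \eqref{eq:MG} is then an identity between two fixed vectors of $\mathbb{R}^d$ through a diagonal matrix, so it decouples into the $d$ scalar equations $M^G_{kk}\,\partial_{x_k} I_h(G'_\varepsilon(\phi)) = \partial_{x_k}\phi$, one per coordinate direction.

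The key computation is to evaluate these partial derivatives componentwise on a structured element. There I can single out the vertex $\x_0$ and, for each $k$, the vertex $\x_k$ separated from $\x_0$ purely along the $k$-th axis; for a $\mathbb{P}_1$ function on such an (axis-aligned, right-angled) element the gradient component along that edge reduces to a single nodal difference, so that $\partial_{x_k}\phi = (\phi(\x_k)-\phi(\x_0))/h_k$ and $\partial_{x_k} I_h(G'_\varepsilon(\phi)) = (G'_\varepsilon(\phi(\x_k))-G'_\varepsilon(\phi(\x_0)))/h_k$, with the \emph{same} edge length $h_k$. Dividing, $h_k$ cancels and the scalar equation is solved exactly by the difference quotient in \eqref{eq:defdiscreteMobilityG}; in the degenerate case $\phi(\x_0)=\phi(\x_k)$ one passes to the limit and recovers $1/G''_\varepsilon(\phi(\x_0))$, which is legitimate since $G_\varepsilon\in C^2(\mathbb{R})$. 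This is exactly the point where the structured hypothesis enters: the cancellation of $h_k$, and hence the exactness of \eqref{eq:MG} as a $\mathbb{P}_0$ equality, relies on each difference quotient involving a vertex pair separated in one coordinate direction only.

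Nonnegativity then comes for free: by \eqref{eq:G2trunc} we have $G''_\varepsilon=1/M_\varepsilon\ge0$, so $G_\varepsilon$ is convex and $G'_\varepsilon$ nondecreasing, whence the numerator and denominator of \eqref{eq:defdiscreteMobilityG} always carry the same sign and $M^G_{kk}|_{I_i}\ge0$. To close the argument I would record the converse, which is the content of the Remark: on a general (unstructured) simplex the edges are oblique, each constant gradient component mixes nodal values along several non-axis-aligned edges through the inverse element Jacobian, and there is no vertex pair whose separation is purely in the $k$-th direction. Consequently the nodal difference quotient \eqref{eq:defdiscreteMobilityG} cannot even be written down, and a diagonal matrix can no longer reproduce $\nabla\phi$ from $\nabla I_h(G'_\varepsilon(\phi))$ while remaining a sign-correct, consistent approximation of the scalar $M_\varepsilon(\phi)$; hence structured triangulations are required.

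I expect the delicate step to be the componentwise verification in the second paragraph: one must check that on a structured element the $\mathbb{P}_1$ gradients of $\phi$ and of $I_h(G'_\varepsilon(\phi))$ genuinely decouple along the axes with a \emph{common} scalar factor $h_k$, since it is this decoupling---not the mere existence of some diagonal matrix relating two given vectors---that makes \eqref{eq:defdiscreteMobilityG} simultaneously exact, geometry-independent beyond axis alignment, and the natural discrete counterpart of $M_\varepsilon=1/G''_\varepsilon$. The nonnegativity and the limiting case $\phi(\x_0)=\phi(\x_k)$ are then routine.
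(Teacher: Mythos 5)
Your argument is correct and coincides with the paper's own (largely implicit) justification: the paper offers no formal proof of this remark, merely the definition \eqref{eq:defdiscreteMobilityG} together with the observations that \eqref{eq:MG} holds as a $\mathbb{P}_0$ equality because $\Phi_h=\mathbb{P}_1$ and that $M^G_{kk}\big|_{I_i}\ge 0$ by convexity of $G_\varepsilon$, and your componentwise computation (cancellation of the common edge length $h_k$ along each axis-aligned vertex pair, sign agreement of numerator and denominator, and the limiting case $\phi(\x_0)=\phi(\x_k)$) is precisely the reasoning that underlies those assertions. Your closing discussion of why the construction breaks down on unstructured simplices is a reasonable elaboration of the point the remark is making, and does not depart from the paper's approach.
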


\subsubsection{Conservation of volume and energy-stability}

\begin{lem} Any solution $(\phi^{n+1},\mu^{n+1}) \in \Phi_h\times M_h$ of the 
G$_\varepsilon$-scheme \eqref{eq:SchemeG} satisfies the conservation property
\beq\label{eq:conservationG}
\int_\Omega \phi^{n+1} d\x
\,=\,
\int_\Omega \phi^{n} d\x
\eeq
and the following discrete version of the energy law \eqref{eq:Energylawmod}:
%
%
\beq\label{eq:EnergylawGscheme}
\delta_t E_h(\phi^{n+1})
+ \dis\int_\Omega \left| \sqrt{M^G_\varepsilon(\phi^{n+1})} \nabla\mu^{n+1} \right|^2 d\x
\leq 0\,,
\eeq
where 
$$
E_h(\phi)=\int_\Omega \frac12|\nabla\phi|^2 + \int_\Omega I_h(F(\phi))\,.
$$

\end{lem}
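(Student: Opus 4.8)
The plan is to reproduce, at the fully discrete level, the continuous argument of Lemma~\ref{lem:energylawmod} by choosing the appropriate test functions in the two equations of \eqref{eq:SchemeG} and combining them. For the conservation property \eqref{eq:conservationG} I would take $\bar\mu = 1$ in \eqref{eq:SchemeG}$_1$, which is admissible since constants belong to $M_h=\mathbb{P}_k$ with $k\ge1$. The diffusive term vanishes because $\nabla 1=0$, leaving $(\phi^{n+1}-\phi^n,1)_h=0$; and since $\phi^{n+1},\phi^n\in\mathbb{P}_1$ one has $(f,1)_h=\int_\Omega I_h(f)\,d\x=\int_\Omega f\,d\x$, so \eqref{eq:conservationG} follows at once.

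For the energy law the natural choice is $\bar\mu=\mu^{n+1}$ in \eqref{eq:SchemeG}$_1$ and $\bar\phi=\delta_t\phi^{n+1}=(\phi^{n+1}-\phi^n)/\Delta t$ in \eqref{eq:SchemeG}$_2$ (an admissible test function since $\Phi_h=\mathbb{P}_1$ is a linear space). The first choice yields the dissipation term $(M^G_\varepsilon(\phi^{n+1})\nabla\mu^{n+1},\nabla\mu^{n+1})$ together with the coupling term $\tfrac1{\Delta t}(\phi^{n+1}-\phi^n,\mu^{n+1})_h$; the second choice produces on its right-hand side the term $(\mu^{n+1},\delta_t\phi^{n+1})_h$, which by symmetry of $(\cdot,\cdot)_h$ is exactly the same coupling term. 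Adding the two relations after moving the coupling term in the second equation to the left-hand side, the two coupling contributions cancel, and I am left with the gradient contribution $(\nabla\phi^{n+1},\nabla\delta_t\phi^{n+1})$, the lumped potential contribution, and the dissipation term.

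Two elementary facts then close the estimate. For the gradient term I would use $a(a-b)=\tfrac12(a^2-b^2)+\tfrac12(a-b)^2$ to write $(\nabla\phi^{n+1},\nabla\delta_t\phi^{n+1})=\delta_t\big(\tfrac12\int_\Omega|\nabla\phi^{n+1}|^2\,d\x\big)+\tfrac1{2\Delta t}\int_\Omega|\nabla(\phi^{n+1}-\phi^n)|^2\,d\x$, whose last term is nonnegative and may be discarded. For the potential term I need the mass-lumped version of the Eyre relation \eqref{eq:eyre}: the pointwise inequality $(F_c'(a)+F_e'(b))(a-b)\ge F(a)-F(b)$, valid for all $a,b\in\mathbb{R}$ because $F_c$ is convex and $F_e$ is concave (see \eqref{eq:convexsplitting2}), applied node-by-node and weighted by the positive lumped-mass weights, gives $\tfrac1{\Delta t}\big(I_h(F_c'(\phi^{n+1}))+I_h(F_e'(\phi^n)),\phi^{n+1}-\phi^n\big)_h\ge\delta_t\int_\Omega I_h(F(\phi^{n+1}))\,d\x$. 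Finally, the sign of the dissipation term is guaranteed because $M^G_\varepsilon(\phi^{n+1})$ is a diagonal matrix with nonnegative entries (by the convexity of $G_\varepsilon$, as noted after \eqref{eq:defdiscreteMobilityG}), so that $(M^G_\varepsilon(\phi^{n+1})\nabla\mu^{n+1},\nabla\mu^{n+1})=\int_\Omega|\sqrt{M^G_\varepsilon(\phi^{n+1})}\nabla\mu^{n+1}|^2\,d\x\ge0$. Collecting these bounds yields \eqref{eq:EnergylawGscheme}.

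The main obstacle, and the only genuinely delicate point, is the transfer of the Eyre convex/concave splitting inequality through the mass-lumping operator: one must verify that $(\cdot,\cdot)_h$ is a sum of nodal evaluations with strictly positive quadrature weights, so that the pointwise inequality survives after summation, and that $(I_h(F(\phi^{n+1})),1)_h=\int_\Omega I_h(F(\phi^{n+1}))\,d\x$, which identifies the discrete free-energy $\int_\Omega I_h(F(\phi))$ in $E_h$. Everything else is a routine discrete transcription of the continuous energy estimate, with the $\mathbb{P}_0$ discrete mobility $M^G_\varepsilon$ replacing $M_\varepsilon(\phi)$ and the lumped inner product replacing the $L^2(\Omega)$ product.
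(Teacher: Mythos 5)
Your proposal is correct and follows essentially the same route as the paper: take $\bar\mu=1$ in \eqref{eq:SchemeG}$_1$ for conservation, then test \eqref{eq:SchemeG}$_1$ by $\bar\mu=\mu^{n+1}$ and \eqref{eq:SchemeG}$_2$ by $\bar\phi=\frac1{\Delta t}(\phi^{n+1}-\phi^n)$ and invoke the Eyre splitting inequality \eqref{eq:eyre}. The additional details you supply (the telescoping identity for the gradient term, the nodal transfer of the convex/concave inequality through the positive lumped-mass weights, and the nonnegativity of the diagonal matrix $M^G_\varepsilon$) are exactly the points the paper leaves implicit, and they are all handled correctly.
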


\begin{proof}
We obtain \eqref{eq:conservationG} by taking  $\bar\mu=1$ in \eqref{eq:SchemeG}$_1$. Moreover, by testing equation \eqref{eq:SchemeG}$_2$ by $\bar\phi=\frac1{\Delta t} (\phi^{n+1} - \phi^n)$, equation \eqref{eq:SchemeG}$_1$ by $\bar\mu=\mu^{n+1}$ and using \eqref{eq:eyre} we arrive at \eqref{eq:EnergylawGscheme}.
\end{proof}

\subsubsection{Approximated maximum principle}
\begin{lem}
If $M_h=\mathbb{P}_1$ (hence $\Phi_h=M_h$), then any solution $(\phi^{n+1},\mu^{n+1}) \in \Phi_h\times M_h$ of the  G$_\varepsilon$-scheme \eqref{eq:SchemeG} satisfies the following discrete version of \eqref{eq:Gestimatemod}:
\beq\label{eq:Gschemeestimate}
\ba{c}\dis
\delta_t\left(\int_\Omega I_h(G_\varepsilon(\phi^{n+1})) d\x\right)
+ \dis\int_\Omega I_h\Big((\omega^{n+1})^2\Big) d\x
+ \int_\Omega \left| \sqrt{R(\phi^{n+1})} \nabla\phi^{n+1} \right|^2 d\x
\\ \hueco
\leq\dis
\frac{1}{8\eta^2}
\left(\int_\Omega|\nabla\phi^{n}|^2 d\x
+\int_\Omega|\nabla\phi^{n+1}|^2 d\x\right)\,,
\ea
\eeq
with $\omega^{n+1}=\mu^{n+1} - \big(I_h(F_c'(\phi^{n+1})) + I_h(F_e'(\phi^{n}))\big)$ and $R(\phi)$ being a positive semidefinite diagonal matrix function (related to $F''_c(\phi))$, defined by
$$
R_{kk}\Big|_{I_i} 
\,:=\,
\left\{\ba{cc}\dis
\frac{ F'_c(\phi(\x_k))  -  F'_c(\phi(\x_{0}))}{ \phi(\x_k) - \phi(\x_{0}) }
& 
 \mbox{ if } \phi(\x_{0}) \neq \phi(\x_k)\,,
\\ \hueco
\dis{F''_c(\phi(\x_{0}))} 
& 
\mbox{ if } \phi(\x_{0}) = \phi(\x_k)\,.
\ea\right.
$$
Note that $R_{kk}\Big|_{I_i}\ge 0$ owing to the convexity of $F_c(\phi)$.

Moreover, using \eqref{eq:EnergylawGscheme} we can deduce 
\beq\label{eq:IhGbound}
\int_\Omega I_h(G_\varepsilon(\phi^{n+1})) d\x
\leq
C\,\frac{T}{\eta^2}\,,
\eeq
where $C$ depends on the initial energy $E(\phi^0)$ and $\int_\Omega I_h(G_\varepsilon(\phi^0)) d\x$. 
\end{lem}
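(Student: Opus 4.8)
The plan is to reproduce at the discrete level the argument of Lemma~\ref{lem:Gestimatemod}, exploiting the fact that the non-centered mobility $M^G_\varepsilon$ was designed precisely so that the identity \eqref{eq:MG} holds. Concretely, I would test the first equation of \eqref{eq:SchemeG} with $\bar\mu=I_h(G'_\varepsilon(\phi^{n+1}))$, which is an admissible test function exactly because the hypothesis $M_h=\mathbb{P}_1$ guarantees $I_h(G'_\varepsilon(\phi^{n+1}))\in M_h$. Using the symmetry of the diagonal matrix $M^G_\varepsilon(\phi^{n+1})$ together with \eqref{eq:MG}, the flux term collapses:
$$
\big(M^G_\varepsilon(\phi^{n+1})\nabla\mu^{n+1},\nabla I_h(G'_\varepsilon(\phi^{n+1}))\big)
=\big(\nabla\mu^{n+1},M^G_\varepsilon(\phi^{n+1})\nabla I_h(G'_\varepsilon(\phi^{n+1}))\big)
=\big(\nabla\mu^{n+1},\nabla\phi^{n+1}\big).
$$
This is the discrete counterpart of replacing $\tfrac{1}{G''_\varepsilon(\phi)}\nabla G'_\varepsilon(\phi)$ by $\nabla\phi$ in the continuous proof, and it is the real crux of the scheme's construction.

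For the temporal term I would invoke the convexity of $G_\varepsilon$ (guaranteed by $G''_\varepsilon=1/M_\varepsilon\ge0$, see \eqref{eq:G2trunc}). Since the mass-lumped product reduces to a nodal sum weighted by the lumped masses, the nodewise inequality $G'_\varepsilon(a)(a-b)\ge G_\varepsilon(a)-G_\varepsilon(b)$ transfers verbatim, giving
$$
\frac{1}{\Delta t}\big(\phi^{n+1}-\phi^n,I_h(G'_\varepsilon(\phi^{n+1}))\big)_h
\ge
\delta_t\!\left(\int_\Omega I_h(G_\varepsilon(\phi^{n+1}))\,d\x\right).
$$
Combining the two previous displays yields $\delta_t\int_\Omega I_h(G_\varepsilon(\phi^{n+1}))\,d\x+(\nabla\mu^{n+1},\nabla\phi^{n+1})\le 0$, so it remains to expand $(\nabla\mu^{n+1},\nabla\phi^{n+1})$.

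Here I would use the second equation of \eqref{eq:SchemeG} rewritten through $\omega^{n+1}=\mu^{n+1}-(I_h(F_c'(\phi^{n+1}))+I_h(F_e'(\phi^n)))$, which lies in $\Phi_h=\mathbb{P}_1$ (again thanks to $M_h=\mathbb{P}_1$) and satisfies the discrete Poisson identity $(\nabla\phi^{n+1},\nabla\bar\phi)=(\omega^{n+1},\bar\phi)_h$ for all $\bar\phi\in\Phi_h$. Splitting $\mu^{n+1}=\omega^{n+1}+I_h(F_c'(\phi^{n+1}))+I_h(F_e'(\phi^n))$ produces three contributions. Choosing $\bar\phi=\omega^{n+1}$ in the Poisson identity turns the first into $(\omega^{n+1},\omega^{n+1})_h=\int_\Omega I_h((\omega^{n+1})^2)\,d\x$. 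For the convex term I would use the matrix $R(\phi^{n+1})$, which by the same element-wise finite-difference computation as for $M^G_\varepsilon$ satisfies $\nabla I_h(F_c'(\phi^{n+1}))=R(\phi^{n+1})\nabla\phi^{n+1}$, so that $(\nabla I_h(F_c'(\phi^{n+1})),\nabla\phi^{n+1})=\int_\Omega|\sqrt{R(\phi^{n+1})}\nabla\phi^{n+1}|^2\,d\x$. Finally, since $F_e'(\phi)=-\tfrac{1}{4\eta^2}\phi$ is linear by \eqref{eq:convexsplitting1}, one has $I_h(F_e'(\phi^n))=-\tfrac{1}{4\eta^2}\phi^n$, and bounding $\tfrac{1}{4\eta^2}(\nabla\phi^n,\nabla\phi^{n+1})$ with Young's inequality $ab\le\tfrac12a^2+\tfrac12b^2$ produces exactly the right-hand side of \eqref{eq:Gschemeestimate}. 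Collecting all terms gives \eqref{eq:Gschemeestimate}.

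For the uniform bound \eqref{eq:IhGbound}, I would sum \eqref{eq:Gschemeestimate} in time: the $\delta_t$ term telescopes to $\int_\Omega I_h(G_\varepsilon(\phi^{n+1}))\,d\x-\int_\Omega I_h(G_\varepsilon(\phi^0))\,d\x$, the two nonnegative dissipation terms are discarded, and the right-hand side is controlled by noting that energy stability \eqref{eq:EnergylawGscheme} together with $F\ge0$ gives $\tfrac12\int_\Omega|\nabla\phi^{k}|^2\,d\x\le E_h(\phi^k)\le E_h(\phi^0)$ for every $k$; hence $\sum_n\Delta t\,\tfrac{1}{8\eta^2}\big(\int_\Omega|\nabla\phi^n|^2\,d\x+\int_\Omega|\nabla\phi^{n+1}|^2\,d\x\big)\le C\,T/\eta^2$. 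The main technical points to verify carefully are the two discrete \emph{chain-rule} identities (the given \eqref{eq:MG} for $M^G_\varepsilon$ and the analogous one for $R$), which hold as $\mathbb{P}_0$ equalities only because of the structured-mesh and $\mathbb{P}_1$ assumptions, together with the repeated use of $M_h=\mathbb{P}_1$ to ensure that $I_h(G'_\varepsilon(\phi^{n+1}))$ and $\omega^{n+1}$ are legitimate test functions; once these are in place, the estimate follows by purely algebraic manipulation.
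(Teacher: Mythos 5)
Your proposal is correct and follows essentially the same route as the paper's proof: testing with $\bar\mu=I_h(G'_\varepsilon(\phi^{n+1}))$ and invoking \eqref{eq:MG}, using nodewise convexity of $G_\varepsilon$ for the time term, introducing $\omega^{n+1}$ and the discrete identity $(\omega^{n+1},\bar\phi)_h=(\nabla\phi^{n+1},\nabla\bar\phi)$ to expand $(\nabla\mu^{n+1},\nabla\phi^{n+1})$, and treating the convex and concave parts via $R(\phi^{n+1})$ and Young's inequality, respectively. The only cosmetic difference is the order in which you split $\mu^{n+1}$ versus apply the discrete Poisson identity, and your final summation argument using \eqref{eq:EnergylawGscheme} matches (and slightly elaborates) the paper's derivation of \eqref{eq:IhGbound}.
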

\begin{proof}
Testing equation \eqref{eq:SchemeG}$_1$ by $\bar\mu=I_h(G'(\phi^{n+1}))$  and using  property \eqref{eq:MG}
we get: 
$$
\int_\Omega I_h\Big(\delta_t \phi^{n+1} \, G_\varepsilon'(\phi^{n+1})\Big) d\x
\,+\, ( \nabla\mu^{n+1},\nabla\phi^{n+1})
\,=\,0\, .
$$
By introducing the auxiliary variable $\omega^{n+1}=\mu^{n+1} - \big(I_h(F_c'(\phi^{n+1})) + I_h(F_e'(\phi^{n}))\big)\in \mathbb{P}_1$ (because $M_h=\mathbb{P}_1$) we can rewrite \eqref{eq:SchemeG}$_2$ as
$$
(\omega^{n+1},\bar\phi)_h
=(\nabla\phi^{n+1},\nabla\bar\phi)
\quad\pato\bar\phi\in\Phi_h\,,
$$
therefore taking first $\bar\phi=\mu^{n+1}\in \mathbb{P}_1$ and afterwards $\bar\phi=I_h(F_c'(\phi^{n+1})) + I_h(F_e'(\phi^{n}))\in \mathbb{P}_1$, one has 
$$
\ba{rcl}\dis
(\nabla\mu^{n+1},\nabla\phi^{n+1} )
&=&\dis
(\omega^{n+1},\mu^{n+1})_h
\\ \hueco 
&=&\dis
\Big( \omega^{n+1},\omega^{n+1} + I_h(F_c'(\phi^{n+1})) + I_h(F_e'(\phi^{n}))\Big)_h
\\ \hueco 
&=&\dis
\int_\Omega I_h\Big((\omega^{n+1})^2\Big) d\x
+ \int_\Omega \nabla \phi^{n+1}\cdot\nabla\big(I_h(F_c'(\phi^{n+1})) + I_h(F_e'(\phi^{n}))\big) d\x\,,
\ea
$$
hence
\beq\label{eq:proofGeps1}
\dis
\int_\Omega 
I_h\Big(\delta_t \phi^{n+1} \,G_\varepsilon'(\phi^{n+1}) \Big)
+ I_h\Big((\omega^{n+1})^2\Big)
+  \nabla\Big(I_h(F_c'(\phi^{n+1})) 
+ I_h(F_e'(\phi^{n}))\Big)\cdot\nabla \phi^{n+1} 
=0
\,.
\eeq
%
By using Taylor expansion and the convexity of $G_\varepsilon(\phi)$ ($G_\varepsilon''(\phi)\geq 0$), we obtain
\beq\label{eq:proofGeps2}
\dis\int_\Omega I_h\Big(\delta_t\phi^{n+1} \, G_\varepsilon'(\phi^{n+1})\Big) d\x
\ge \delta_t \int_\Omega I_h(G_\varepsilon(\phi^{n+1}))  d\x\,.
\eeq
Because $F'_e$ is linear $(I_h(F'_e(\phi)) = F'_e(\phi))$ we can deduce that
\beq\label{eq:proofGeps3}
-\dis\int_\Omega\nabla I_h(F_e'(\phi^{n})) \cdot\nabla\phi^{n+1} d\x
=\frac1{4\eta^2}\dis\int_\Omega\nabla \phi^{n} \cdot\nabla\phi^{n+1} d\x
\leq
\frac{1}{8\eta^2}
\left(\int_\Omega|\nabla\phi^{n}|^2 d\x
+\int_\Omega|\nabla\phi^{n+1}|^2 d\x\right)\,.
\eeq

Now, taking into account that the mesh is structured and $F''_c(\phi)\geq0$ for all $\phi\in\mathbb{R}$, we consider the $\mathbb{P}_0$ positive semidefinite diagonal matrix function
$R(\phi^{n+1})$ which satisfies
$$
\nabla I_h(F_c'(\phi^{n+1})) = R(\phi^{n+1}) \nabla\phi^{n+1}\,,
$$
which implies
\beq\label{eq:proofGeps4}
\nabla I_h(F_c'(\phi^{n+1}))\cdot \nabla\phi^{n+1}
 =
 \Big( R(\phi^{n+1}) \nabla\phi^{n+1}\Big)\cdot\nabla\phi^{n+1} 
 = \left| \sqrt{R(\phi^{n+1})} \nabla\phi^{n+1} \right|^2 .
\eeq
Therefore, using \eqref{eq:proofGeps2}, \eqref{eq:proofGeps3} and \eqref{eq:proofGeps4} in \eqref{eq:proofGeps1} we obtain \eqref{eq:Gschemeestimate}.
\end{proof}
\begin{coro}\label{lem:Gapprxbounds}
If $\Phi_h=M_h= \mathbb{P}_1$, any solution $(\phi^{n+1},\mu^{n+1})\in \Phi_h\times M_h$  of G$_\varepsilon$-scheme \eqref{eq:SchemeG} satisfies the following estimates:
\beq\label{eq:Glowerbound}
\int_\Omega \big(I_h(\phi_-^{n+1})\big)^2 d\x 
\leq
C \, \frac{\varepsilon(1 - \varepsilon)}{\eta^2}
\leq
C \, \frac{\varepsilon}{\eta^2}
\eeq
and 
\beq\label{eq:Gupperbound}
\int_\Omega \Big(I_h\big((\phi^{n+1} - 1)_+\big)\Big)^2 d\x 
\leq
C \, \frac{\varepsilon(1 - \varepsilon)}{\eta^2}
\leq
C \, \frac{\varepsilon}{\eta^2}\,,
\eeq
where $C$ depends on the initial energy $E(\phi^0)$ and $\int_\Omega I_h(G_\varepsilon(\phi^0)) d\x$. 
\end{coro}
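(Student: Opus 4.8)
The plan is to turn the estimate \eqref{eq:IhGbound} for $\int_\Omega I_h(G_\varepsilon(\phi^{n+1}))\,d\x$ into the pointwise boundedness bounds \eqref{eq:Glowerbound}--\eqref{eq:Gupperbound} by exploiting the coercivity of $G_\varepsilon$ near the endpoints recorded in Remark~\ref{rk:boundsG}. The key observation is that the lower bounds there, namely $G_\varepsilon(\phi)\ge \frac{1}{2\varepsilon(1-\varepsilon)}\phi^2$ for $\phi\le\varepsilon$ and $G_\varepsilon(\phi)\ge \frac{1}{2\varepsilon(1-\varepsilon)}(\phi-1)^2$ for $\phi\ge 1-\varepsilon$, directly dominate the quadratic quantities $(\phi_-)^2$ and $((\phi-1)_+)^2$.

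First I would fix the lower-bound inequality \eqref{eq:Glowerbound}. The crucial point is that since $\Phi_h=\mathbb{P}_1$, the interpolation operator $I_h$ commutes with taking values at nodes, so the inequality from Remark~\ref{rk:boundsG} can be lifted to a nodal (hence $I_h$-level) inequality. Concretely, at every node $\x_j$ one has $G_\varepsilon(\phi^{n+1}(\x_j))\ge \frac{1}{2\varepsilon(1-\varepsilon)}\big(\phi_-^{n+1}(\x_j)\big)^2$: indeed when $\phi^{n+1}(\x_j)\le\varepsilon$ (in particular whenever the negative part is nonzero) the Remark applies, and when $\phi^{n+1}(\x_j)>\varepsilon\ge 0$ the right-hand side vanishes while $G_\varepsilon\ge 0$, so the inequality holds trivially. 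Applying $I_h$ (which is monotone on nodal values and exact on $\mathbb{P}_1$ data such as these nodal quadratics) and integrating gives
\beq
\frac{1}{2\varepsilon(1-\varepsilon)}\int_\Omega \big(I_h(\phi_-^{n+1})\big)^2 d\x
\leq
\int_\Omega I_h\big(G_\varepsilon(\phi^{n+1})\big)\,d\x\,.
\eeq
Combining with \eqref{eq:IhGbound} and multiplying through by $2\varepsilon(1-\varepsilon)$ yields \eqref{eq:Glowerbound}, where the constant $C$ absorbs the factor $2$ and the $T$-dependence. The upper bound \eqref{eq:Gupperbound} is obtained identically using the second inequality of Remark~\ref{rk:boundsG}, replacing $\phi_-$ by $(\phi-1)_+$ and using $G_\varepsilon\ge 0$ on the complementary region $\phi<1-\varepsilon$.

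\textbf{The main obstacle} I anticipate is the careful handling of the mass-lumping operator $I_h$ so that the pointwise comparison $G_\varepsilon(\phi)\ge \frac{1}{2\varepsilon(1-\varepsilon)}(\phi_-)^2$ survives integration against the lumped inner product rather than the exact $L^2$ product. The resolution is that both sides, being evaluated through $I_h$, reduce to the same convex combination of nodal values weighted by the (nonnegative) lumped masses; since the nodal inequality holds at each node and the weights are nonnegative, the integrated inequality follows without needing any quadrature estimate. One should also check that $\big(I_h(\phi_-^{n+1})\big)^2$ and $I_h\big((\phi_-^{n+1})^2\big)$ relate correctly — here it suffices to note that the bound is ultimately on the nodal values, so working throughout with $I_h$ applied to nodal quantities avoids any interpolation-of-a-product discrepancy. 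No further regularity or structural hypothesis beyond $\Phi_h=M_h=\mathbb{P}_1$ is needed.
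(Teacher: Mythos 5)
Your argument is correct and follows essentially the same route as the paper: extend the coercivity bounds of Remark~\ref{rk:boundsG} to all of $\mathbb{R}$ via $G_\varepsilon\ge 0$, pass through the lumped interpolant, and combine with \eqref{eq:IhGbound}. The only step worth stating explicitly rather than gesturing at is the Jensen-type inequality $\big(I_h(f)\big)^2\le I_h(f^2)$ (valid pointwise since $I_h(f)$ is a convex combination of nodal values and $s\mapsto s^2$ is convex), which is exactly what the paper invokes to convert $\int_\Omega I_h\big((\phi^{n+1}_-)^2\big)\,d\x$ into $\int_\Omega \big(I_h(\phi^{n+1}_-)\big)^2\,d\x$.
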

\begin{proof} The main ideas of this proof were 
already used in \cite{KMAD19}.  From Remark~\ref{rk:boundsG} and taking into account that $G_\varepsilon(\phi)\geq0$ for all $\phi\in\mathbb{R}$ we have
$$
G_\varepsilon(\phi)
\geq
\dis\frac{1}{2\varepsilon(1 - \varepsilon)} (\phi_-)^2
\quad\mbox{ and }\quad
G_\varepsilon(\phi)
\geq
\dis\frac{1}{2\varepsilon(1 - \varepsilon)} \big((\phi - 1)_+\big)^2
\quad \pato \phi \in\mathbb{R}\,.
$$
Then, using that 
$$
\big(I_h(\phi)\big)^2\leq I_h(\phi^2)
\quad\pato\phi\in C^0(\overline\Omega)\,,
$$
we obtain
$$
\int_\Omega
I_h\big(G_\varepsilon(\phi^{n+1})\big) d\x
\geq
\dis\frac{1}{2\varepsilon(1 - \varepsilon)} 
\int_\Omega I_h\big((\phi^{n+1}_-)^2\big) d\x
\geq
\dis\frac{1}{2\varepsilon(1 - \varepsilon)} 
\int_\Omega \big(I_h(\phi^{n+1}_-)\big)^2 d\x\,
$$
and
$$
\int_\Omega
I_h\big(G_\varepsilon(\phi^{n+1})\big) d\x
\geq
\dis\frac{1}{2\varepsilon(1 - \varepsilon)} 
\int_\Omega I_h\Big( \big((\phi^{n+1} - 1)_+\big)^2\Big) d\x
\geq
\dis\frac{1}{2\varepsilon(1 - \varepsilon)} 
\int_\Omega \Big(I_h \big((\phi^{n+1} - 1)_+\big)\Big)^2 d\x\,.
$$
Finally, combining previous expressions with estimate \eqref{eq:IhGbound} we obtain \eqref{eq:Glowerbound} and \eqref{eq:Gupperbound}.
\end{proof}

\begin{obs}
G$_\varepsilon$-scheme \eqref{eq:SchemeG} satisfies an approximate maximum principle for $\phi^{n+1}$, because estimates \eqref{eq:Glowerbound} and \eqref{eq:Gupperbound} imply in particular  that
$$
I_h(\phi^{n+1}_-) \rightarrow 0
\quad\mbox{ and }\quad
I_h \big((\phi^{n+1} - 1)_+\big) \rightarrow 0
\quad \mbox{ in }  L^2(\Omega),
\quad \mbox{ as }  \quad\varepsilon\rightarrow0\,,
$$
with $\mathcal{O}\left(\sqrt{\varepsilon}/\eta\right)$ accuracy rate.
\end{obs}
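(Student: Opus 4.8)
The plan is to read the left-hand sides of \eqref{eq:Glowerbound} and \eqref{eq:Gupperbound} directly as squared $L^2(\Omega)$-norms and then extract the stated convergence by taking square roots; no new machinery is needed beyond the two estimates already proved in Corollary~\ref{lem:Gapprxbounds}. Since $I_h(\phi^{n+1}_-)\in\Phi_h\subset L^2(\Omega)$, the left side of \eqref{eq:Glowerbound} is exactly $\|I_h(\phi^{n+1}_-)\|_{L^2(\Omega)}^2$, and likewise the left side of \eqref{eq:Gupperbound} equals $\|I_h\big((\phi^{n+1}-1)_+\big)\|_{L^2(\Omega)}^2$. Taking square roots in both estimates then gives
$$
\|I_h(\phi^{n+1}_-)\|_{L^2(\Omega)}\leq \sqrt{C}\,\frac{\sqrt{\varepsilon(1-\varepsilon)}}{\eta}\leq \sqrt{C}\,\frac{\sqrt{\varepsilon}}{\eta}
\quad\mbox{ and }\quad
\|I_h\big((\phi^{n+1}-1)_+\big)\|_{L^2(\Omega)}\leq \sqrt{C}\,\frac{\sqrt{\varepsilon}}{\eta}\,,
$$
which is precisely the claimed $\mathcal{O}(\sqrt{\varepsilon}/\eta)$ accuracy rate.

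The only point that requires a word of care is that the convergence as $\varepsilon\to 0$ demands that the constant $C$ on the right not blow up with $\varepsilon$. As stated in Corollary~\ref{lem:Gapprxbounds}, $C$ depends only on $E(\phi^0)$ and on $\int_\Omega I_h(G_\varepsilon(\phi^0))\,d\x$. The former is independent of $\varepsilon$; for the latter, under the natural assumption $\phi^0\in[0,1]$ one checks from the explicit piecewise form of $G_\varepsilon$ that $\int_\Omega I_h(G_\varepsilon(\phi^0))\,d\x$ stays uniformly bounded as $\varepsilon\to 0$: on the interior branch it reduces to $G(\phi^0)$, which is bounded on $[0,1]$, while the two quadratic end-branches converge to the finite endpoint values $G(0)=G(1)=1$ because $\varepsilon\,G'(\varepsilon)\to 0$ and $\varepsilon^2\,G''(\varepsilon)\to 0$ as $\varepsilon\to 0$. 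Hence $C$ is bounded uniformly in $\varepsilon$, the right-hand sides genuinely tend to zero, and the two $L^2(\Omega)$-limits asserted in the statement follow, with the exhibited rate.

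I expect no genuine obstacle here: the result is a direct reinterpretation of the two already-established estimates. The only feature to keep explicit is that $\eta$, the interfacial-width parameter, is held fixed while $\varepsilon\to 0$, so that $\sqrt{\varepsilon}/\eta$ is the honest convergence rate in the truncation parameter; and that it is exactly the uniform-in-$\varepsilon$ boundedness of the initial functional $\int_\Omega I_h(G_\varepsilon(\phi^0))\,d\x$ (guaranteed by $\phi^0\in[0,1]$) that ensures the right-hand side vanishes in the limit rather than merely staying finite.
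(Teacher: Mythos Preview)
Your proposal is correct and matches the paper's intent: the remark is stated there without a separate proof, as an immediate consequence of taking square roots in \eqref{eq:Glowerbound}--\eqref{eq:Gupperbound}, which is exactly what you do. Your additional verification that $\int_\Omega I_h(G_\varepsilon(\phi^0))\,d\x$ stays uniformly bounded as $\varepsilon\to 0$ when $\phi^0\in[0,1]$ is a useful detail the paper leaves implicit.
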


\subsection{J$_\varepsilon$-scheme}
The \textit{J$_\varepsilon$-scheme} is defined as follows: Given $\phi^n \in \Phi_h=\mathbb{P}_1$, find $(\phi^{n+1},\mu^{n+1})\in \Phi_h\times M_h$ such that:
\beq\label{eq:SchemeJ}
\left\{\ba{rcl}\dis
\frac1{\Delta t}(\phi^{n+1} - \phi^n, \bar\mu)_h 
+(M^J_\varepsilon(\phi ^{n+1})\nabla\mu^{n+1},\nabla\bar\mu) 
&=&0\,,
\\ \hueco
(\nabla\phi^{n+1},\nabla\bar\phi)
+ \big(F'_c(\phi^{n+1}) + F'_e(\phi^{n}),\bar\phi\big)
&=&
(\mu^{n+1},\bar\phi)_h\,,
\ea\right.
\eeq
for all $(\bar\phi,\bar\mu)\in \Phi_h\times M_h$ with $M^J_\varepsilon(\phi^{n+1})$ being an approximation of $\left(1/J''_\varepsilon(\phi^{n+1})\right)^2$. 
In particular, for any $\phi\in \Phi_h=\mathbb{P}_1$,  $M^J_\varepsilon(\phi)$ is a diagonal matrix with elements $M^J_{kk}$ for $k=1,\dots,d$. That is, for each element $I_i$ of the triangulation $\{\mathcal{T}_h\}$ (with nodes $\x_0$, $\x_k$ in the $k$-th axis) we compute the $\mathbb{P}_0$ function
\beq\label{eq:defdiscreteMobilityJ}
M^J_{kk}\Big|_{I_i} 
\,:=\,
\left\{\ba{cc}\dis
\left(\frac{ \phi(\x_k) - \phi(\x_0) }
{ J'_\varepsilon(\phi(\x_k))  -  J'_\varepsilon(\phi(\x_0))}
\right)^2
& 
 \mbox{ if } \phi(\x_0) \neq \phi(\x_k)\,,
\\ \hueco
\dis
\left(\frac1{J''_{\varepsilon}(\phi(\x_0))}\right)^2 
& 
\mbox{ if } \phi(\x_0) = \phi(\x_k)\,.
\ea\right.
\eeq
In fact, using that $\Phi_h=\mathbb{P}_1$, the following $ \mathbb{P}_0$ relation holds:
\beq\label{eq:MJ}
 M^J_\varepsilon(\phi)\,\nabla I_h \Big(J'_\varepsilon(\phi)\Big) = \sqrt{ M^J_\varepsilon(\phi)}\nabla \phi\, ,
 \quad \forall\, \phi \in\Phi_h \,.
\eeq

\begin{obs}

$M^J_\varepsilon(\phi)$ can be understood as a non-centered modification of the mobility  $M_\varepsilon(\phi)$ in such a way that both
practically coincide when $\phi$ is located around the center $\phi=1/2$ of the interval $(0,1)$ but they differ when the values of $\phi$ are close to $0$ and $1$ (see  Figure~\ref{fig:ComparionsMgMjM}).  We observe how this  non-centered mobility  $M^J_\varepsilon(\phi)$ is going to zero at the endpoints $\phi=0$ and $\phi=1$ faster than $M_\varepsilon(\phi)$, as $\varepsilon $ goes to zero, which will help to control the boundedness of the unknown $\phi$ between $0$ and $1$ in the numerical scheme.

\end{obs}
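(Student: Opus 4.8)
The plan is to make the three assertions precise by rewriting the discrete mobility \eqref{eq:defdiscreteMobilityJ} as an average of the continuous profile $M_\varepsilon$ and then comparing this average with the pointwise value of $M_\varepsilon$. The basic tool is the fundamental theorem of calculus applied to $J_\varepsilon$: since $J''_\varepsilon(\phi)=1/\sqrt{M_\varepsilon(\phi)}$ by \eqref{eq:J2trunc} and $J''_\varepsilon$ is continuous and strictly positive on $\mathbb{R}$, for an element $I_i$ with distinct nodal values $a=\phi(\x_0)$, $b=\phi(\x_k)$ one has
\[
\frac{J'_\varepsilon(b)-J'_\varepsilon(a)}{b-a}=\frac1{b-a}\int_a^b\frac{d\phi}{\sqrt{M_\varepsilon(\phi)}},
\]
so that $\sqrt{M^J_{kk}|_{I_i}}$ is exactly the harmonic mean of $\sqrt{M_\varepsilon}$ over $[a,b]$, and by the mean value theorem $M^J_{kk}|_{I_i}=M_\varepsilon(\xi)$ for some $\xi$ between $a$ and $b$. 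When the two nodal values coalesce this recovers $M_\varepsilon(\phi)$ exactly, which both fixes the meaning of the scalar profile $M^J_\varepsilon(\phi)$ plotted in Figure~\ref{fig:ComparionsMgMjM} and shows that any discrepancy between $M^J_\varepsilon$ and $M_\varepsilon$ is driven entirely by the variation of $\sqrt{M_\varepsilon}$ across the element.

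First I would treat the region around $\phi=1/2$. On any compact subinterval $[\delta,1-\delta]\subset(0,1)$ with $\delta>\varepsilon$ the truncation in \eqref{eq:defMeps} is inactive, so $M_\varepsilon(\phi)=\phi(1-\phi)$ is smooth and bounded below by $\delta(1-\delta)>0$; hence $\sqrt{M_\varepsilon}$ is slowly varying there, with its maximum attained at $\phi=1/2$, where its derivative vanishes. A harmonic mean of a slowly varying, strictly positive function departs from its pointwise value only to second order in the oscillation, so a Taylor expansion of $J'_\varepsilon$ about the element midpoint yields $M^J_\varepsilon(\phi)=M_\varepsilon(\phi)+O(h^2)$ uniformly on $[\delta,1-\delta]$. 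This is the precise sense in which the two mobilities \emph{practically coincide} near the centre.

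The endpoints require the explicit asymptotics of $J'_\varepsilon$. Using $\arcsin(\sqrt{1-\phi})=\tfrac{\pi}{2}-\sqrt{\phi}+O(\phi^{3/2})$ as $\phi\to0^+$, the formula for $J'$ gives $J'_\varepsilon(\phi)=J'(0)+2\sqrt{\phi}+O(\phi^{3/2})$ on $(\varepsilon,1-\varepsilon)$, while below $\varepsilon$ the function $J'_\varepsilon$ is the affine extension with slope $J''(\varepsilon)=1/\sqrt{\varepsilon(1-\varepsilon)}$. Because $\sqrt{M_\varepsilon}=\sqrt{\phi(1-\phi)}$ vanishes at the boundary, its harmonic mean over an element reaching the boundary layer is dominated by these smallest values; equivalently, the intermediate point $\xi$ produced by the mean value theorem is pulled toward the endpoint, so $M^J_\varepsilon=M_\varepsilon(\xi)$ falls strictly below the value of $M_\varepsilon$ at the centre of the element. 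This is the quantitative origin of the gap visible in Figure~\ref{fig:ComparionsMgMjM}. To capture the rate as $\varepsilon\to0$ I would fix the representative configuration with one node pinned in the truncated region and the other at a value $\phi$, substitute the $\sqrt{\phi}$ expansion into \eqref{eq:defdiscreteMobilityJ}, and compare the resulting value with $M_\varepsilon(0)=\varepsilon(1-\varepsilon)$; the analysis at $\phi=1$ then follows from the symmetry $\phi\mapsto1-\phi$, under which both $M_\varepsilon$ and $J''_\varepsilon$ are invariant.

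The main obstacle is that $M^J_\varepsilon$ is intrinsically a two-node object, so the comparison only becomes a clean inequality once the representative element is fixed; the heart of the work is to choose that configuration so that the induced scalar profile is both faithful to the boundedness mechanism of the scheme and amenable to the $\arcsin$ expansion, and to control the $C^2$-junction at $\phi=\varepsilon$ where $M_\varepsilon$ switches from the smooth branch $\phi(1-\phi)$ to the constant value $\varepsilon(1-\varepsilon)$. Once the harmonic-mean representation and the endpoint expansion are in place, the monotone ordering of the means---the domination of the harmonic mean by the smallest values of $\sqrt{M_\varepsilon}$---delivers the inequality $M^J_\varepsilon<M_\varepsilon$ in the boundary layer and the claimed faster vanishing at $\phi=0$ and $\phi=1$ as $\varepsilon\to0$.
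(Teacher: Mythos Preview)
The paper offers no proof of this remark: it is an informal observation supported only by the numerical plot in Figure~\ref{fig:ComparionsMgMjM}, with no accompanying argument. Your proposal therefore goes well beyond what the paper does, and in that sense there is no ``paper's own proof'' to compare against.

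That said, your harmonic-mean representation is correct and genuinely explanatory. Writing
\[
\sqrt{M^J_{kk}\big|_{I_i}}=\left(\frac{1}{b-a}\int_a^b\frac{d\phi}{\sqrt{M_\varepsilon(\phi)}}\right)^{-1}
\]
for distinct nodal values $a,b$ identifies $\sqrt{M^J_{kk}}$ as the harmonic mean of $\sqrt{M_\varepsilon}$ on $[a,b]$, which immediately gives $M^J_{kk}\le\max_{[a,b]}M_\varepsilon$ and makes precise why the two mobilities agree to $O(h^2)$ away from the endpoints (where $M_\varepsilon$ is smooth and bounded below) while the harmonic mean is dragged down near $0$ and $1$ (where $\sqrt{M_\varepsilon}$ collapses). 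This is a cleaner mechanism than anything the paper states.

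You are also right to flag the main difficulty: $M^J_\varepsilon$ is intrinsically an element-wise, two-node object, so any scalar comparison with $M_\varepsilon(\phi)$ requires fixing a convention for the second node. The paper never specifies what scalar profile is actually plotted in Figure~\ref{fig:ComparionsMgMjM}, so the ``faster vanishing as $\varepsilon\to0$'' claim is heuristic there as well. Note in particular that if both nodes lie in the truncated zone $(-\infty,\varepsilon]$, then $J''_\varepsilon$ is constant and $M^J_{kk}=M_\varepsilon(\varepsilon)=\varepsilon(1-\varepsilon)$ exactly, so the two coincide; the strict inequality $M^J_{kk}<M_\varepsilon$ you describe only appears when the element straddles the smooth and truncated regimes. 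Your proposal handles this honestly, but be aware that the ``faster vanishing'' statement cannot be turned into a uniform rate without committing to a specific nodal configuration---which neither you nor the paper fully pins down.
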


\begin{obs}
In order to be able to construct the matrix $M^J_\varepsilon(\phi)$ we need to use the requirement of considering structured triangulations of $\Omega$.
\end{obs}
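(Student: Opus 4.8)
The content to establish is that the diagonal $\mathbb{P}_0$ matrix produced by \eqref{eq:defdiscreteMobilityJ} does realize the algebraic identity \eqref{eq:MJ}, and that this is possible precisely on structured meshes. The plan is to regard \eqref{eq:MJ} as a single elementwise vector identity: since $\phi\in\Phi_h=\mathbb{P}_1$ and $I_h(J'_\varepsilon(\phi))\in\mathbb{P}_1$, both $\nabla\phi$ and $\nabla I_h(J'_\varepsilon(\phi))$ are constant on each element $I_i$, so with $M^J_\varepsilon(\phi)=\mathrm{diag}(M^J_{11},\dots,M^J_{dd})$ the relation \eqref{eq:MJ} decouples into the $d$ scalar equations $M^J_{kk}\,\partial_k I_h(J'_\varepsilon(\phi))=\sqrt{M^J_{kk}}\,\partial_k\phi$, one per coordinate direction. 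The whole question is then whether each such scalar equation can be solved by a quantity built, as in \eqref{eq:defdiscreteMobilityJ}, from a single axis-aligned nodal difference.

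For sufficiency I would use the defining feature of a structured triangulation: each element admits a vertex $\x_0$ together with, for every direction $k$, a neighbouring vertex $\x_k$ differing from $\x_0$ only in its $k$-th coordinate, so that for any $\mathbb{P}_1$ function $v$ one has $\partial_k v=(v(\x_k)-v(\x_0))/h_k$ with $h_k=|\x_k-\x_0|$. Applying this to $v=\phi$ and to $v=I_h(J'_\varepsilon(\phi))$, the spacing $h_k$ cancels and the scalar equation is solved exactly by $M^J_{kk}=(\partial_k\phi/\partial_k I_h(J'_\varepsilon(\phi)))^2$, which is precisely \eqref{eq:defdiscreteMobilityJ}. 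A point to check here is the sign: because $J''_\varepsilon=1/\sqrt{M_\varepsilon}>0$, the map $J'_\varepsilon$ is strictly increasing, so $\partial_k\phi$ and $\partial_k I_h(J'_\varepsilon(\phi))$ carry the same sign and the nonnegative square root in \eqref{eq:MJ} is the correct branch, making the identity hold as stated. The identical argument, dropping the square and replacing $J'_\varepsilon$ by $G'_\varepsilon$, recovers \eqref{eq:MG} from \eqref{eq:defdiscreteMobilityG}, so the reasoning is uniform across both schemes.

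For necessity, which is the heart of the remark, I would argue that both ingredients of \eqref{eq:defdiscreteMobilityJ} fail off the structured class. First, the formula literally refers to nodes $\x_0,\x_k$ lying on the $k$-th axis; such an axis-aligned pair exists inside every element only when the element edges are parallel to the coordinate directions, i.e.\ exactly under the structured hypothesis. Second, even granting a general diagonal ansatz, on a non-axis-aligned element the $k$-th component of $\nabla v$ is not a single nodal difference but a fixed linear combination of all vertex values of $v$, dictated by the inverse of the element Jacobian; hence $\partial_k\phi$ and $\partial_k I_h(J'_\varepsilon(\phi))$ mix every vertex and cannot be recovered from one axis-aligned quotient. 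I would make this concrete with a minimal obtuse triangle in $d=2$ and show, by evaluating \eqref{eq:MJ} on two independent $\mathbb{P}_1$ data $\phi$, that the two diagonal entries become overdetermined, so no diagonal $M^J_\varepsilon(\phi)$ can satisfy \eqref{eq:MJ} simultaneously for all admissible $\phi$.

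The step I expect to be the main obstacle is this necessity direction: one must fix a precise meaning for ``structured'' (alignment of element edges with the coordinate axes) and then demonstrate a genuine algebraic failure off that class, rather than merely the ill-definedness of the formula \eqref{eq:defdiscreteMobilityJ}. I anticipate settling it through the explicit obtuse-triangle overdetermination above, or more abstractly through the observation that \eqref{eq:MJ} is equivalent to $\nabla\phi=\sqrt{M^J_\varepsilon(\phi)}\,\nabla I_h(J'_\varepsilon(\phi))$, i.e.\ that $\nabla\phi$ is the image of $\nabla I_h(J'_\varepsilon(\phi))$ under a fixed diagonal matrix in the standard basis — a constraint compatible with the elementwise finite-difference structure only when the edge directions coincide with the coordinate axes.
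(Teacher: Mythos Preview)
The paper offers no proof of this remark at all: it is stated as a bare observation, with no accompanying argument. Your proposal therefore does not match the paper's proof, because there is none to match; what you have written is an independent justification that goes well beyond the paper.

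On content: your sufficiency argument is correct and is exactly the mechanism implicit in the paper --- on an axis-aligned element the $k$-th partial of any $\mathbb{P}_1$ function is the nodal difference along the $k$-th edge, the spacing cancels, and \eqref{eq:defdiscreteMobilityJ} delivers \eqref{eq:MJ}, with the sign issue resolved by the strict monotonicity of $J'_\varepsilon$. That part is fine and is really all the remark is gesturing at.

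Your necessity discussion, however, overshoots the target. The remark is about \emph{the} matrix $M^J_\varepsilon(\phi)$, namely the specific object \eqref{eq:defdiscreteMobilityJ}, and for that the necessity is immediate: the formula literally invokes a pair of vertices $\x_0,\x_k$ differing only in the $k$-th coordinate, and such pairs exist in every element precisely when the triangulation is structured. Your overdetermination argument instead attacks the stronger statement that \emph{no} elementwise diagonal matrix can realise \eqref{eq:MJ} on unstructured meshes. That stronger statement is actually false as you have phrased it: for any fixed $\phi\in\mathbb{P}_1$ and any element, the two constant vectors $\nabla\phi$ and $\nabla I_h(J'_\varepsilon(\phi))$ are related componentwise, so one can always set $M^J_{kk}=\bigl(\partial_k\phi/\partial_k I_h(J'_\varepsilon(\phi))\bigr)^2$ and satisfy \eqref{eq:MJ} on that element for that $\phi$; what is lost off the structured class is only the \emph{nodal-difference representation} \eqref{eq:defdiscreteMobilityJ}, not the existence of a diagonal solution. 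So the obtuse-triangle overdetermination you propose would not in fact go through, and you should drop that line and keep only the direct observation that \eqref{eq:defdiscreteMobilityJ} is well-defined exactly when axis-aligned vertex pairs are available.
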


\subsubsection{Conservation of volume and energy-stability}

\begin{lem} Any solution of the $J_\varepsilon$-scheme \eqref{eq:SchemeJ} satisfies
the conservation property
\beq\label{eq:conservationJ}
\int_\Omega \phi^{n+1} d\x
\,=\,
\int_\Omega \phi^{n} d\x
\eeq
and
 the following discrete version of the energy law \eqref{eq:Energylawmod}:
\beq\label{eq:EnergylawJscheme}
\delta_tE(\phi^{n+1})
\,+\,\dis\int_\Omega  M^J_\varepsilon(\phi^{n+1})|\nabla\mu^{n+1}|^2 d\x
\,\leq\,0\,,
\eeq
where $E(\phi)$ is defined in \eqref{eq:CHenergy}.
\end{lem}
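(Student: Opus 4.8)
The plan is to mirror the argument used for the G$_\varepsilon$-scheme energy law, adapting it to the fact that here the potential terms $F'_c(\phi^{n+1})+F'_e(\phi^{n})$ are discretized with the exact $L^2$-product $(\cdot,\cdot)$ rather than the lumped one. Conservation \eqref{eq:conservationJ} follows immediately by choosing $\bar\mu=1$ in \eqref{eq:SchemeJ}$_1$: the second term vanishes since $\nabla\bar\mu=0$, and because $\phi^{n+1}-\phi^n\in\mathbb{P}_1$ one has $(\phi^{n+1}-\phi^n,1)_h=\int_\Om(\phi^{n+1}-\phi^n)\,d\x$, so that $\int_\Om\phi^{n+1}\,d\x=\int_\Om\phi^{n}\,d\x$.

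For the energy law I would test \eqref{eq:SchemeJ}$_1$ with $\bar\mu=\mu^{n+1}$ and \eqref{eq:SchemeJ}$_2$ with $\bar\phi=\delta_t\phi^{n+1}=\frac1{\Delta t}(\phi^{n+1}-\phi^n)$. From the first equation,
$$
\frac1{\Delta t}(\phi^{n+1}-\phi^n,\mu^{n+1})_h
=-\int_\Om M^J_\varepsilon(\phi^{n+1})|\nabla\mu^{n+1}|^2\,d\x\,,
$$
where the right-hand side is the quadratic form associated with the diagonal matrix $M^J_\varepsilon$, which is nonnegative because each entry $M^J_{kk}$ in \eqref{eq:defdiscreteMobilityJ} is a square. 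Testing the second equation, its right-hand side produces exactly $\frac1{\Delta t}(\mu^{n+1},\phi^{n+1}-\phi^n)_h$, which by symmetry of $(\cdot,\cdot)_h$ coincides with the quantity above; this matching of the two lumped products across the two equations is the structural point that makes the cross term cancel exactly.

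It then remains to bound below the left-hand side of the tested second equation. For the gradient term I would use the elementary identity $a\cdot(a-b)\ge\tfrac12|a|^2-\tfrac12|b|^2$ (equivalently, dropping the nonnegative remainder $\tfrac12|a-b|^2$) to get $\frac1{\Delta t}(\nabla\phi^{n+1},\nabla(\phi^{n+1}-\phi^n))\ge\delta_t\big(\tfrac12\int_\Om|\nabla\phi^{n+1}|^2\,d\x\big)$. For the potential term, since it is written with the exact $L^2$-product, I would apply the convex-splitting inequality \eqref{eq:eyre} of Eyre directly, with no interpolation involved, giving $\frac1{\Delta t}\big(F'_c(\phi^{n+1})+F'_e(\phi^{n}),\phi^{n+1}-\phi^n\big)\ge\delta_t\int_\Om F(\phi^{n+1})\,d\x$. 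Adding these two lower bounds reconstructs $\delta_t E(\phi^{n+1})$ with the exact energy \eqref{eq:CHenergy}, and combining with the dissipation identity from the first equation yields \eqref{eq:EnergylawJscheme}.

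I do not expect a genuine obstacle: the argument is the standard test-and-combine scheme for convex-splitting discretizations. The only points requiring care are (i) keeping the lumped product $(\cdot,\cdot)_h$ consistent in both the mass term of \eqref{eq:SchemeJ}$_1$ and the right-hand side of \eqref{eq:SchemeJ}$_2$, so the chemical-potential cross term cancels exactly, and (ii) verifying that $M^J_\varepsilon(\phi^{n+1})$ induces a nonnegative quadratic form, which holds since each diagonal entry in \eqref{eq:defdiscreteMobilityJ} is a square. Note that, unlike the G$_\varepsilon$-scheme whose stability was expressed through the lumped energy $E_h$, the exact treatment of $F$ here yields energy stability directly in terms of the continuous energy $E$ of \eqref{eq:CHenergy}.
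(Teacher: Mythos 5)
Your proposal is correct and follows essentially the same route as the paper: conservation by taking $\bar\mu=1$, then testing \eqref{eq:SchemeJ}$_1$ with $\bar\mu=\mu^{n+1}$ and \eqref{eq:SchemeJ}$_2$ with $\bar\phi=\frac1{\Delta t}(\phi^{n+1}-\phi^n)$, cancelling the cross terms via the lumped product and invoking the convex-splitting inequality \eqref{eq:eyre}. Your observation that the exact $L^2$ treatment of the potential yields stability for the continuous energy $E$ rather than the lumped $E_h$ is exactly why the lemma is stated with $E$ from \eqref{eq:CHenergy}.
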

\begin{proof}
We obtain \eqref{eq:conservationJ} by taking  $\bar\mu=1$ in \eqref{eq:SchemeJ}$_1$.
Moreover, testing equation \eqref{eq:SchemeJ}$_1$ by $\bar\mu=\mu^{n+1}$,  equation \eqref{eq:SchemeJ}$_2$ by $\bar\phi=\frac1{\Delta t}(\phi^{n+1} - \phi^n)$ and using \eqref{eq:eyre} we arrive at \eqref{eq:EnergylawJscheme}.
\end{proof}

\subsubsection{Approximated maximum principle}

\begin{lem} If $M_h=\mathbb{P}_1$ (hence $\Phi_h=M_h$), then 
any solution of the $J_\varepsilon$-scheme \eqref{eq:SchemeJ} satisfies 
 the following discrete version of estimate \eqref{eq:Jestimatemod}:
\beq\label{eq:Jschemeestimate}
\delta_t\left(\int_\Omega I_h (J_\varepsilon(\phi^{n+1}))\right)
\,\leq\,
\int_\Omega  M^J_\varepsilon(\phi ^{n+1})|\nabla\mu^{n+1}|^2 d\x
+ \dis\int_\Omega |\nabla\phi^{n+1}|^2 d\x\,.
\eeq
Moreover, using \eqref{eq:EnergylawJscheme} we can deduce 
\beq\label{eq:IhJbound}
\int_\Omega I_h(J_\varepsilon(\phi^{n+1})) d\x
\leq
C_1 + C_2\,T\,,
\eeq
where $C_1$ and $C_2$ depend on the initial energy $E(\phi^0)$ and of $\int_\Omega I_h (J_\varepsilon(\phi^0))$.
\end{lem}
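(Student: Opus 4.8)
The plan is to mirror both the continuous argument of Lemma~\ref{lem:Jestimatemod} and the discrete argument already carried out for the G$_\varepsilon$-scheme, replacing the test function $I_h(G'_\varepsilon(\phi^{n+1}))$ by $I_h(J'_\varepsilon(\phi^{n+1}))$ and exploiting the discrete identity \eqref{eq:MJ} in place of \eqref{eq:MG}. First I would test \eqref{eq:SchemeJ}$_1$ with $\bar\mu = I_h(J'_\varepsilon(\phi^{n+1}))$; this is legitimate precisely because the hypothesis $M_h=\mathbb{P}_1$ guarantees $I_h(J'_\varepsilon(\phi^{n+1}))\in M_h$. The mobility term $(M^J_\varepsilon(\phi^{n+1})\nabla\mu^{n+1},\nabla I_h(J'_\varepsilon(\phi^{n+1})))$ is then rewritten using the symmetry of the diagonal matrix $M^J_\varepsilon$ and the $\mathbb{P}_0$ relation \eqref{eq:MJ}, which collapses it to $(\sqrt{M^J_\varepsilon(\phi^{n+1})}\,\nabla\mu^{n+1},\nabla\phi^{n+1})$. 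This produces the discrete counterpart of the first identity in the proof of Lemma~\ref{lem:Jestimatemod}.

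Next I would treat the discrete time-derivative term. Since $\phi^{n+1}-\phi^n$ and $I_h(J'_\varepsilon(\phi^{n+1}))$ both lie in $\mathbb{P}_1$, the mass-lumped product reproduces the nodal values, so $(\phi^{n+1}-\phi^n, I_h(J'_\varepsilon(\phi^{n+1})))_h = \int_\Omega I_h\big((\phi^{n+1}-\phi^n)J'_\varepsilon(\phi^{n+1})\big)d\x$. A nodewise Taylor expansion together with the convexity $J''_\varepsilon\ge0$ (Remark~\ref{rk:boundsJ}) gives $J'_\varepsilon(\phi^{n+1})(\phi^{n+1}-\phi^n)\ge J_\varepsilon(\phi^{n+1})-J_\varepsilon(\phi^n)$ at every node, hence, after dividing by $\Delta t$, interpolating and integrating,
$$
\frac1{\Delta t}\big(\phi^{n+1}-\phi^n, I_h(J'_\varepsilon(\phi^{n+1}))\big)_h \ge \delta_t\int_\Omega I_h(J_\varepsilon(\phi^{n+1}))\,d\x,
$$
exactly as in \eqref{eq:proofGeps2}. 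Combining this with the rewritten mobility term and applying Young's inequality $ab\le a^2/2+b^2/2$ to $(\sqrt{M^J_\varepsilon(\phi^{n+1})}\,\nabla\mu^{n+1},\nabla\phi^{n+1})$ yields \eqref{eq:Jschemeestimate} (with room to spare, since the $1/2$ factors only weaken the bound).

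Finally, to obtain \eqref{eq:IhJbound} I would multiply \eqref{eq:Jschemeestimate} by $\Delta t$ and sum from the initial step up to $n$; the left-hand side telescopes to $\int_\Omega I_h(J_\varepsilon(\phi^{n+1}))d\x - \int_\Omega I_h(J_\varepsilon(\phi^0))d\x$. The accumulated mobility term $\sum\Delta t\int_\Omega M^J_\varepsilon(\phi^{m+1})|\nabla\mu^{m+1}|^2$ is controlled by $E(\phi^0)$ after summing the discrete energy law \eqref{eq:EnergylawJscheme}, contributing to the $T$-independent constant $C_1$. For the accumulated Dirichlet term, the energy law also gives $E(\phi^{m+1})\le E(\phi^0)$, and since $F\ge0$ one has $\frac12\int_\Omega|\nabla\phi^{m+1}|^2\le E(\phi^0)$, so $\sum\Delta t\int_\Omega|\nabla\phi^{m+1}|^2\le 2E(\phi^0)\,(n+1)\Delta t\le 2E(\phi^0)\,T$, which is the $C_2T$ term. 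This fixes $C_1=\int_\Omega I_h(J_\varepsilon(\phi^0))d\x+E(\phi^0)$ and $C_2=2E(\phi^0)$, both depending only on $E(\phi^0)$ and $\int_\Omega I_h(J_\varepsilon(\phi^0))d\x$, as claimed.

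I expect the only genuinely delicate points to be (i) verifying that the non-centered $\mathbb{P}_0$ construction \eqref{eq:defdiscreteMobilityJ} indeed satisfies \eqref{eq:MJ} on each structured element, since this is what makes the square root appear cleanly and distinguishes the $J_\varepsilon$-scheme from a naive discretization, and (ii) checking that mass lumping is compatible with the nodewise convexity inequality, i.e.\ that the interpolation operator preserves the Taylor bound after integration. Both reduce to bookkeeping on a structured mesh using identities already established for the G$_\varepsilon$-scheme, so I anticipate no real obstacle beyond care with these discrete identities.
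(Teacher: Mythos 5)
Your proposal is correct and follows essentially the same route as the paper: test \eqref{eq:SchemeJ}$_1$ with $\bar\mu=I_h(J'_\varepsilon(\phi^{n+1}))$ (admissible since $M_h=\mathbb{P}_1$), collapse the mobility term via \eqref{eq:MJ} to $\big(\sqrt{M^J_\varepsilon(\phi^{n+1})}\nabla\mu^{n+1},\nabla\phi^{n+1}\big)$, apply Young's inequality, and handle the time-increment term by nodewise Taylor expansion and convexity of $J_\varepsilon$ exactly as in \eqref{eq:proofGeps2}. Your telescoping argument for \eqref{eq:IhJbound}, bounding the accumulated dissipation and $\int_\Omega|\nabla\phi^{m+1}|^2$ terms through \eqref{eq:EnergylawJscheme} and $F\ge 0$, is a correct completion of the step the paper leaves implicit.
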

\begin{proof}
Testing equation \eqref{eq:SchemeJ}$_1$ by $\bar\mu=I_h(J'(\phi^{n+1}))$ and using relation \eqref{eq:MJ} we obtain:
$$
\ba{rcl}\dis
\int_\Omega I_h\Big(\delta_t \phi^{n+1} \, J'(\phi^{n+1})\Big)d\x 
&=&
\dis-\int_\Omega  \sqrt{ M^J_\varepsilon(\phi ^{n+1})}\nabla\mu^{n+1}\cdot\nabla\phi^{n+1} d\x
\\ \hueco
&\leq&
\dis\int_\Omega   \left| \sqrt{M^J_\varepsilon(\phi ^{n+1})} \nabla\mu^{n+1} \right|^2 d\x
+ \dis\int_\Omega |\nabla\phi^{n+1}|^2 d\x\,.
\ea
$$
By using Taylor expansion and the convexity of $J_\eps(\phi)$ (as for $G_\eps(\phi)$ in \eqref{eq:proofGeps2}), we deduce \eqref{eq:Jschemeestimate}.
\end{proof}
\begin{coro}\label{lem:Japprxbounds}
Any  solution $(\phi^{n+1},\mu^{n+1})$ of J$_\varepsilon$-scheme \eqref{eq:SchemeJ} satisfies the  following estimates:
\beq\label{eq:Jlowerbound}
\int_\Omega \big(I_h(\phi_-^{n+1})\big)^2 d\x 
\leq
C \, \sqrt{\varepsilon(1 - \varepsilon)}
\leq
C \, \sqrt{\varepsilon}
\eeq
and
\beq\label{eq:Jupperbound}
\int_\Omega \Big(I_h\big((\phi^n - 1)_+\big)\Big)^2 d\x 
\leq
C \, \sqrt{\varepsilon(1 - \varepsilon)}
\leq
C \, \sqrt{\varepsilon}\,,
\eeq
where $C$ depends on the initial energy $E(\phi^0)$ and $\int_\Omega I_h (J_\varepsilon(\phi^0))$. 
\end{coro}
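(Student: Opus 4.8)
The plan is to transcribe, almost verbatim, the argument used for the G$_\varepsilon$-scheme in Corollary~\ref{lem:Gapprxbounds}, replacing the quadratic lower bounds on $G_\varepsilon$ by the corresponding ones for $J_\varepsilon$ recorded in Remark~\ref{rk:boundsJ}. First I would observe that, since $J_\varepsilon(\phi)\ge0$ for all $\phi\in\mathbb{R}$, the two one-sided inequalities of Remark~\ref{rk:boundsJ} extend to the global pointwise bounds
$$
J_\varepsilon(\phi)\geq\frac{1}{2\sqrt{\varepsilon(1-\varepsilon)}}(\phi_-)^2
\quad\mbox{and}\quad
J_\varepsilon(\phi)\geq\frac{1}{2\sqrt{\varepsilon(1-\varepsilon)}}\big((\phi-1)_+\big)^2
\quad\pato\phi\in\mathbb{R}.
$$

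Next I would apply the nodal interpolation inequality $(I_h(\phi))^2\le I_h(\phi^2)$, valid for $\phi\in C^0(\overline{\Omega})$ and already used in the proof of Corollary~\ref{lem:Gapprxbounds}. Applying $I_h$ to the pointwise inequalities above at $\phi=\phi^{n+1}$ and integrating over $\Omega$ yields
$$
\int_\Omega I_h\big(J_\varepsilon(\phi^{n+1})\big)d\x
\geq\frac{1}{2\sqrt{\varepsilon(1-\varepsilon)}}\int_\Omega\big(I_h(\phi^{n+1}_-)\big)^2d\x,
$$
where the passage from $I_h\big((\phi^{n+1}_-)^2\big)$ to $\big(I_h(\phi^{n+1}_-)\big)^2$ uses precisely $(I_h(\cdot))^2\le I_h((\cdot)^2)$ applied to the continuous function $\phi^{n+1}_-$; the analogous chain holds for $(\phi^{n+1}-1)_+$, and the same bound transfers to the index $n$ by shifting the time step.

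Finally, I would invoke the uniform-in-time estimate~\eqref{eq:IhJbound}, namely $\int_\Omega I_h(J_\varepsilon(\phi^{n+1}))\,d\x\le C_1+C_2T$, to close the argument: multiplying through by $2\sqrt{\varepsilon(1-\varepsilon)}$ gives $\int_\Omega(I_h(\phi^{n+1}_-))^2d\x\le C\sqrt{\varepsilon(1-\varepsilon)}\le C\sqrt{\varepsilon}$, and likewise for the $(\cdot-1)_+$ part, which are exactly~\eqref{eq:Jlowerbound} and~\eqref{eq:Jupperbound}. I do not expect any genuine obstacle, since the whole proof is a line-by-line copy of the G$_\varepsilon$ case; the only points worth emphasising are bookkeeping ones. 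First, the constants $C_1,C_2$ in~\eqref{eq:IhJbound} are independent of $\eta$ (in contrast to the factor $T/\eta^2$ in the G$_\varepsilon$ bound~\eqref{eq:IhGbound}), so the resulting $J_\varepsilon$-estimates carry no $\eta$-dependence. Second, because the square-root mobility produces the weight $1/\big(2\sqrt{\varepsilon(1-\varepsilon)}\big)$ rather than $1/\big(2\varepsilon(1-\varepsilon)\big)$, the convergence rate of the approximate maximum principle degrades: from $\mathcal{O}(\sqrt{\varepsilon}/\eta)$ for the G$_\varepsilon$-scheme to $\mathcal{O}(\varepsilon^{1/4})$ here as $\varepsilon\to0$.
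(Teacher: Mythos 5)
Your proposal is correct and follows exactly the route the paper takes: its proof of this corollary is literally ``Following the same steps presented in Lemma~\ref{lem:Gapprxbounds}, but now using Remark~\ref{rk:boundsJ} and estimate \eqref{eq:IhJbound}.'' Your extension of the one-sided bounds to $(\phi_-)^2$ and $((\phi-1)_+)^2$ via $J_\varepsilon\ge 0$, the use of $(I_h(\cdot))^2\le I_h((\cdot)^2)$, and the final appeal to \eqref{eq:IhJbound} reproduce the intended argument, including the correct $\eta$-independence and the $\mathcal{O}(\varepsilon^{1/4})$ rate.
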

\begin{proof}
Following the same steps presented in Lemma~\ref{lem:Gapprxbounds}, but now using Remark~\ref{rk:boundsJ} and estimate \eqref{eq:IhJbound}.
\end{proof}

\begin{obs}
J$_\varepsilon$-scheme \eqref{eq:SchemeJ} satisfies an approximate minimum principle for $\phi^{n+1}$, because estimates \eqref{eq:Jlowerbound} and \eqref{eq:Jupperbound} imply in particular  that
$$
I_h(\phi^{n+1}_-) \rightarrow 0
\quad\mbox{ and }\quad
I_h \big((\phi^{n+1} - 1)_+\big) \rightarrow 0
\quad \mbox{ in }  L^2(\Omega)
\quad \mbox{ as } \quad \varepsilon\rightarrow0\,,
$$
with $\mathcal{O}\left(\sqrt[4]{\varepsilon}\right)$ accuracy rate.
\end{obs}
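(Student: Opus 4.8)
The plan is to reproduce the argument of the companion result for the G$_\varepsilon$-scheme (Corollary~\ref{lem:Gapprxbounds}), replacing the coercivity estimates of $G_\varepsilon$ by those of $J_\varepsilon$ recorded in Remark~\ref{rk:boundsJ}, and the uniform bound \eqref{eq:IhGbound} by its $J_\varepsilon$-analogue \eqref{eq:IhJbound}. The only structural inputs I need are already available: the pointwise lower bounds
$$
J_\varepsilon(\phi) \geq \frac{1}{2\sqrt{\varepsilon(1-\varepsilon)}}(\phi_-)^2, \qquad J_\varepsilon(\phi) \geq \frac{1}{2\sqrt{\varepsilon(1-\varepsilon)}}\big((\phi-1)_+\big)^2 \quad \pato \phi \in \mathbb{R},
$$
together with $J_\varepsilon(\phi)\ge 0$ everywhere, and the a priori control $\int_\Omega I_h(J_\varepsilon(\phi^{n+1})) \leq C_1 + C_2 T$ furnished by \eqref{eq:IhJbound}.

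First I would observe that both lower bounds hold pointwise for every real argument, so they remain valid after evaluation at the mesh nodes; applying the nodal $\mathbb{P}_1$-interpolant $I_h$ (monotone on nodal values, linear on each element) then preserves them in the integrated, mass-lumped sense, giving
$$
\int_\Omega I_h\big(J_\varepsilon(\phi^{n+1})\big) d\x \geq \frac{1}{2\sqrt{\varepsilon(1-\varepsilon)}} \int_\Omega I_h\big((\phi^{n+1}_-)^2\big) d\x,
$$
and likewise with $(\phi^{n+1}-1)_+$ in place of $\phi^{n+1}_-$. The second ingredient is the discrete Jensen-type inequality $\big(I_h(\phi)\big)^2 \leq I_h(\phi^2)$ for $\phi \in C^0(\overline\Omega)$, already used in the proof of Corollary~\ref{lem:Gapprxbounds}, which lets me pass from $I_h\big((\phi^{n+1}_-)^2\big)$ to $\big(I_h(\phi^{n+1}_-)\big)^2$ under the integral sign.

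Combining these two steps with \eqref{eq:IhJbound} and rearranging yields
$$
\int_\Omega \big(I_h(\phi^{n+1}_-)\big)^2 d\x \leq 2\sqrt{\varepsilon(1-\varepsilon)}\,(C_1 + C_2 T) =: C\sqrt{\varepsilon(1-\varepsilon)} \leq C\sqrt{\varepsilon},
$$
which is \eqref{eq:Jlowerbound}, and the estimate \eqref{eq:Jupperbound} follows identically from the second lower bound applied to $(\phi^{n+1}-1)_+$. I do not anticipate a genuine obstacle: the computation is a routine transcription of the G$_\varepsilon$ case, and the only point demanding care is the bookkeeping of the scaling factor. Because $J_\varepsilon$ carries the weight $1/(2\sqrt{\varepsilon(1-\varepsilon)})$ rather than $G_\varepsilon$'s $1/(2\varepsilon(1-\varepsilon))$, the bound degrades to $\mathcal{O}(\sqrt{\varepsilon})$ (hence $\mathcal{O}(\sqrt[4]{\varepsilon})$ in the $L^2(\Omega)$ norm); but crucially the constant $C$ is now independent of the interface parameter $\eta$, in contrast to the $\eta^{-2}$ factor appearing in \eqref{eq:Glowerbound}--\eqref{eq:Gupperbound}. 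I would emphasise this $\eta$-independence as the qualitative payoff of the $J_\varepsilon$-scheme relative to the sharper-in-$\varepsilon$ but $\eta$-dependent G$_\varepsilon$-scheme.
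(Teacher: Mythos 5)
Your argument is correct and follows exactly the route the paper takes: its proof of Corollary~\ref{lem:Japprxbounds} is precisely ``follow the steps of Lemma~\ref{lem:Gapprxbounds}, replacing Remark~\ref{rk:boundsG} by Remark~\ref{rk:boundsJ} and \eqref{eq:IhGbound} by \eqref{eq:IhJbound}'', which is what you carry out, and the $\mathcal{O}\left(\sqrt[4]{\varepsilon}\right)$ rate in $L^2(\Omega)$ then follows by taking square roots in \eqref{eq:Jlowerbound}--\eqref{eq:Jupperbound} just as you note. Your closing observation that the constant is independent of $\eta$ (unlike the $\eta^{-2}$ in the G$_\varepsilon$ bounds) matches the discussion the paper itself makes when interpreting the numerical experiments.
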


\section{Numerical simulations}\label{sec:simulations}
The implementation of the numerical schemes follows the same ideas in any spatial dimension, but for the sake of simplicity to illustrate the properties of the schemes, most of the numerical experiments reported in this section have been carried out in a one dimensional domain $\Omega=(0,1)$ using MATLAB \cite{matlab}. At the end of the section we present one numerical experiment in a $2D$-domain using Freefem++ \cite{FREEFEM} to evidence that the proposed ideas also work in higher dimensions. In fact, although the implementation of $M^G_\varepsilon(\phi)$ and $M^J_\varepsilon(\phi)$ 
might seem rather complicated due to the need of comparing values of the unknown $\phi$ in different nodes of the each element of the triangulation, it can be easily done by checking the values of the corresponding $\mathbb{P}_0$ functions that form $\nabla\phi$.

Moreover, we remind the reader that parameter $\eta$ is associated with the width of the interface, so the size of the spatial mesh should always be small enough to capture the interface, hence one must consider at least $h \le \eta$. On the other hand, parameter $\varepsilon$ determine the separation of the mobility from zero, which does not have any relation with the spatial mesh size. In fact, due to the advancement on computing resources, we can take very small values of the parameter $\varepsilon$ (the lowest in this work is $\varepsilon=10^{-20}$) without being close to the machine precision of the computer.

\subsection{M$_0$-scheme}
In order to better comprehend the properties of the proposed schemes, we have compared them with the corresponding  FE scheme with the truncated by zero mobility $M_0(\phi)$, which has been approximated implicitly in time. The scheme reads: Find $(\phi^{n+1},\mu^{n+1})\in \Phi_h\times M_h$ solving
\beq\label{eq:SchemeM0}
\left\{\ba{rcl}\dis
\frac1{\Delta t}(\phi^{n+1} - \phi^n, \bar\mu)
+(M_0(\phi ^{n+1})\nabla\mu^{n+1},\nabla\bar\mu) 
&=&0\,,
\\ \hueco
(\nabla\phi^{n+1},\nabla\bar\phi)
+ \big(F'_c(\phi^{n+1}) + F'_e(\phi^{n}),\bar\phi\big)
&=&
(\mu^{n+1},\bar\phi)\,,
\ea\right.
\eeq
for all $(\bar\phi,\bar\mu)\in \Phi_h\times M_h$.
This type of FE scheme is the standard by default used in this problem, in order to be sure that even if $\phi$ goes outside of the interval $[0,1]$ the mobility $M_0(\phi)$ will never become negative. Moreover, this scheme is energy stable since the following result holds.
\begin{lem} Any solution of the 
M$_0$-scheme \eqref{eq:SchemeM0} satisfies the following discrete version of the energy law \eqref{eq:Energylawmod}:
$$
\delta_tE(\phi^{n+1})
\,+\,\dis\int_\Omega  M_0(\phi^{n+1})|\nabla\mu^{n+1}|^2 d\x
\,\leq\,0\,.
$$
\end{lem}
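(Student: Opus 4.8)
The plan is to reproduce the convex--concave (Eyre) energy-stability argument already used for the G$_\varepsilon$- and J$_\varepsilon$-schemes \eqref{eq:EnergylawGscheme}, \eqref{eq:EnergylawJscheme}, but now with the standard $L^2$-product $(\cdot,\cdot)$ everywhere (the M$_0$-scheme \eqref{eq:SchemeM0} involves no mass-lumping). First I would test \eqref{eq:SchemeM0}$_1$ with $\bar\mu=\mu^{n+1}$ and \eqref{eq:SchemeM0}$_2$ with $\bar\phi=\frac1{\Delta t}(\phi^{n+1}-\phi^n)=\delta_t\phi^{n+1}$. The first test isolates the dissipation $\int_\Omega M_0(\phi^{n+1})|\nabla\mu^{n+1}|^2\,d\x$ and yields
$$
\frac1{\Delta t}(\phi^{n+1}-\phi^n,\mu^{n+1})=-\int_\Omega M_0(\phi^{n+1})|\nabla\mu^{n+1}|^2\,d\x,
$$
while the second produces exactly the coupling $(\mu^{n+1},\delta_t\phi^{n+1})=\frac1{\Delta t}(\phi^{n+1}-\phi^n,\mu^{n+1})$. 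Substituting, the chemical-potential coupling is replaced by minus the dissipation term.

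Next I would estimate the two remaining contributions from \eqref{eq:SchemeM0}$_2$ from below by a discrete time derivative of the energy. For the gradient part I use the elementary inequality $\nabla\phi^{n+1}\cdot\nabla(\phi^{n+1}-\phi^n)\ge\frac12|\nabla\phi^{n+1}|^2-\frac12|\nabla\phi^{n}|^2$, so that
$$
(\nabla\phi^{n+1},\nabla\delta_t\phi^{n+1})\ge\delta_t\int_\Omega\tfrac12|\nabla\phi^{n+1}|^2\,d\x.
$$
For the potential part I invoke the Eyre relation \eqref{eq:eyre}, which gives
$$
\big(F_c'(\phi^{n+1})+F_e'(\phi^{n}),\delta_t\phi^{n+1}\big)\ge\delta_t\int_\Omega F(\phi^{n+1})\,d\x.
$$
Adding these two lower bounds and using the cancellation obtained in the first step produces $\delta_t E(\phi^{n+1})+\int_\Omega M_0(\phi^{n+1})|\nabla\mu^{n+1}|^2\,d\x\le0$, with $E(\phi)$ as in \eqref{eq:CHenergy}, which is precisely the claimed inequality.

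I do not expect a genuine obstacle: the argument is the direct continuous-inner-product analogue of the J$_\varepsilon$-scheme energy law, and is in fact slightly simpler because no discrete semi-inner product $(\cdot,\cdot)_h$ or non-centered mobility matrix appears. The only structural point to check is that the dissipation term has the correct sign, which is immediate from $M_0(\phi)\ge0$ for every $\phi\in\mathbb{R}$ by definition \eqref{def:mobility0}; unlike in the G$_\varepsilon$/J$_\varepsilon$ analysis, positivity of a discrete mobility built from $G_\varepsilon''$ or $J_\varepsilon''$ is not needed here.
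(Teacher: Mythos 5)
Your proposal is correct and follows exactly the paper's own argument: test \eqref{eq:SchemeM0}$_1$ with $\bar\mu=\mu^{n+1}$, test \eqref{eq:SchemeM0}$_2$ with $\bar\phi=\frac1{\Delta t}(\phi^{n+1}-\phi^n)$, cancel the coupling term, and bound the gradient and potential contributions from below via the elementary convexity inequality and the Eyre relation \eqref{eq:eyre}. The paper states this in one line; you have simply written out the same steps in full detail.
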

\begin{proof}
Testing equation \eqref{eq:SchemeM0}$_1$ by $\bar\mu=\mu^{n+1}$,  equation \eqref{eq:SchemeM0}$_2$ by $\bar\phi=\frac1{\Delta t}(\phi^{n+1} - \phi^n)$ and using \eqref{eq:eyre}.
\end{proof}
\begin{obs}
No results about the boundedness of $\phi^n$ in $[0,1]$ are known for M$_0$-scheme \eqref{eq:SchemeM0}.
\end{obs}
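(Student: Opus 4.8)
Since this Remark asserts the \emph{absence} of a boundedness result rather than a positive statement, I would support it by carrying out the only natural attempt at such a proof --- the test-function strategy that succeeds for the G$_\varepsilon$- and J$_\varepsilon$-schemes --- and identifying precisely where it fails for \eqref{eq:SchemeM0}. The plan is to locate the estimate that produces the coercive functional bounding $(\phi^{n+1})_-$ and $(\phi^{n+1}-1)_+$ in Corollary~\ref{lem:Gapprxbounds}, and to show that the corresponding term is structurally unavailable when the centered, implicitly-evaluated mobility $M_0(\phi^{n+1})$ is used.

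First I would try to imitate the proof of \eqref{eq:Gschemeestimate}: test \eqref{eq:SchemeM0}$_1$ by $\bar\mu=I_h(G'_\varepsilon(\phi^{n+1}))$ and attempt to convert the flux term $(M_0(\phi^{n+1})\nabla\mu^{n+1},\nabla I_h(G'_\varepsilon(\phi^{n+1})))$ into a clean quantity such as $(\nabla\mu^{n+1},\nabla\phi^{n+1})$. The decisive ingredient for the G$_\varepsilon$-scheme is the discrete identity \eqref{eq:MG}, $M^G_\varepsilon(\phi)\,\nabla I_h(G'_\varepsilon(\phi))=\nabla\phi$, which holds because $M^G_\varepsilon$ is defined in \eqref{eq:defdiscreteMobilityG} as the element-wise secant slope of $\phi$ against $G'_\varepsilon(\phi)$. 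I would show that no analogue of \eqref{eq:MG} is available for $M_0$: the centered nodal value $M_0(\phi(\x_0))=\phi(\x_0)(1-\phi(\x_0))$ does not coincide with that secant slope, so testing by $I_h(G'_\varepsilon(\phi^{n+1}))$ no longer telescopes and one cannot recover a bound on $\int_\Omega I_h(G_\varepsilon(\phi^{n+1}))\,d\x$ (and, since \eqref{eq:SchemeM0}$_1$ uses the full $L^2$-product rather than the lumped $(\cdot,\cdot)_h$, even the node-wise convexity step \eqref{eq:proofGeps2} is unavailable). The same failure occurs for $J_\varepsilon$ and \eqref{eq:MJ}.

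The deeper obstruction, which I would present as the main point, is that $M_0(\phi)=0$ whenever $\phi\notin[0,1]$. On any element where $\phi^{n+1}$ leaves the interval the flux $M_0(\phi^{n+1})\nabla\mu^{n+1}$ degenerates to zero, so \eqref{eq:SchemeM0} contains no diffusive mechanism acting on the excess $(\phi^{n+1})_-$ or $(\phi^{n+1}-1)_+$. This is exactly why the regularized functionals of \eqref{eq:G2trunc},~\eqref{eq:J2trunc} --- whose purpose is to remain finite and coercive \emph{outside} $[0,1]$ (Remarks~\ref{rk:boundsG},~\ref{rk:boundsJ}) --- cannot be supplied with an estimate here: the term that generates $\int_\Omega I_h(G_\varepsilon(\phi^{n+1}))\,d\x$ for the modified schemes is produced precisely by the non-vanishing, non-centered mobility and is simply absent. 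The only a priori control that \emph{does} survive is the energy law of the preceding Lemma, which yields $H^1$-type bounds but no pointwise information.

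The hard part is that turning this heuristic into a rigorous impossibility theorem would require either exhibiting an explicit discrete solution of \eqref{eq:SchemeM0} with $\phi^{n+1}\notin[0,1]$, or proving a genuine lower bound on the excess; both are delicate because \eqref{eq:SchemeM0} is a coupled nonlinear system for which energy stability provides no coercive functional bounding $(\phi)_-$ and $(\phi-1)_+$. I therefore expect the honest content of the Remark to be exactly this: the discrete chain-rule identities \eqref{eq:MG},~\eqref{eq:MJ} that furnish such a functional for the G$_\varepsilon$- and J$_\varepsilon$-schemes have no centered-mobility counterpart, and the degeneracy $M_0\equiv 0$ outside $[0,1]$ removes the only flux that could control the excess --- which is the precise sense in which no boundedness result is available.
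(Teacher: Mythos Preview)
The paper provides no proof for this Remark at all; it is simply asserted as a one-line observation, without any supporting argument, immediately after the energy-stability lemma for the M$_0$-scheme. Your proposal therefore goes well beyond what the paper does: you supply a detailed heuristic explanation of \emph{why} the test-function machinery of \eqref{eq:MG} and \eqref{eq:MJ} breaks down for the centered mobility $M_0$, whereas the authors are content to record that no boundedness result is available and move on.

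Your analysis is essentially sound as an explanation --- the failure of the discrete chain-rule identity when $M_0$ replaces the non-centered $M^G_\varepsilon$, the absence of mass-lumping in \eqref{eq:SchemeM0}$_1$, and the degeneracy $M_0\equiv 0$ outside $[0,1]$ are indeed the structural reasons the argument of Corollary~\ref{lem:Gapprxbounds} cannot be reproduced. You are also right that this does not constitute a proof that boundedness \emph{fails}, only that the present technique does not apply; the paper makes no stronger claim either. In short, there is nothing to compare against: the paper has no proof, and what you wrote is a correct but unsolicited commentary rather than a reconstruction of the authors' argument.
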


\subsection{Picard iterative algorithms}

Now we describe the iterative algorithms considered to approximate the nonlinear G$_\varepsilon$-scheme \eqref{eq:SchemeG}, J$_\varepsilon$-scheme \eqref{eq:SchemeJ} and 
M$_0$-scheme \eqref{eq:SchemeM0}. 
Let $(\phi^{n},\mu^{n})$ and $(\phi^{\ell},\mu^{\ell})$ be known 
(we consider initially $(\phi^{\ell=0},\mu^{\ell=0})=(\phi^{n},\mu^{n})$), compute 
$(\phi^{\ell + 1},\mu^{\ell + 1})$ such that:
\subsubsection{Iterative algorithm for G$_\varepsilon$-scheme}
$$
\left\{\ba{rcl}
\dis
\frac1{\Delta t}(\phi^{\ell+1}, \bar\mu)_h
+\Big(M^G_\varepsilon(\phi^{l})\nabla\mu^{l+1},\nabla\bar\mu\Big)  
&=&
\dis\frac1{\Delta t}(\phi^{n}, \bar\mu)_h
\,,
\\ \hueco
(\mu^{\ell+1}, \bar\phi)_h 
-(\nabla\phi^{\ell+1},\nabla \bar\phi)
&=&
\Big(I_h(F_c'(\phi^{l})) + I_h(F_e'(\phi^{n})),\bar\phi\Big)_h \,.
\ea\right.
$$

\subsubsection{Iterative algorithm for J$_\varepsilon$-scheme}


$$
\left\{\ba{rcl}\dis
\frac1{\Delta t}(\phi^{\ell+1}, \bar\mu)_h 
+(M^J_\varepsilon(\phi ^{\ell})\nabla\mu^{\ell+1},\nabla\bar\mu) 
&=&
\dis\frac1{\Delta t}(\phi^{n}, \bar\mu)_h \,,
\\ \hueco
(\mu^{n+1},\bar\phi)_h
- (\nabla\phi^{\ell+1},\nabla\bar\phi)
&=&
 (F'_c(\phi^{\ell}) + F'_e(\phi^{n}),\bar\phi) \,.
\ea\right.
$$

\subsubsection{Iterative algorithm for M$_0$-scheme}


$$
\left\{\ba{rcl}\dis
\frac1{\Delta t}(\phi^{\ell+1}, \bar\mu)
+(M_0(\phi ^{\ell})\nabla\mu^{\ell+1},\nabla\bar\mu) 
&=&
\dis\frac1{\Delta t}(\phi^{n}, \bar\mu) \,,
\\ \hueco
(\mu^{n+1},\bar\phi)
-
(\nabla\phi^{\ell+1},\nabla\bar\phi)
&=&
 (F'_c(\phi^{\ell}) + F'_e(\phi^{n}),\bar\phi) \,.
\ea\right.
$$

\

In all cases, we iterate until 
$$
\frac{\|(\phi^{\ell +1} - \phi^{\ell},\mu^{\ell +1} - \mu^{\ell})\|_{L^2\times L^2}}{\|(\phi^{\ell +1},\mu^{\ell +1})\|_{L^2\times L^2}} \leq tol\,=\,10^{-8}\,.
$$

\subsection{Example I. Discrete boundedness}
In the first example we study how each of the schemes behave when the values of $\phi$ are close to the endpoints of the interval $[0,1]$. To this end we have considered as initial condition the configuration presented in Figure~\ref{fig:exI_initial_final}, that is, two ``balls'' that are defined as:
\beq\label{eq:initialexa1}
\phi_0(x)=
-\frac12\left[\tanh\left(\frac{(x-0.3)-0.08}{\sqrt{2}\eta}\right) 
+\tanh\left(\frac{(x-0.72)-0.15}{\sqrt{2}\eta}\right)\right]
+ 1\,.
\eeq

\begin{figure}
\begin{center}
\includegraphics[scale=0.110]{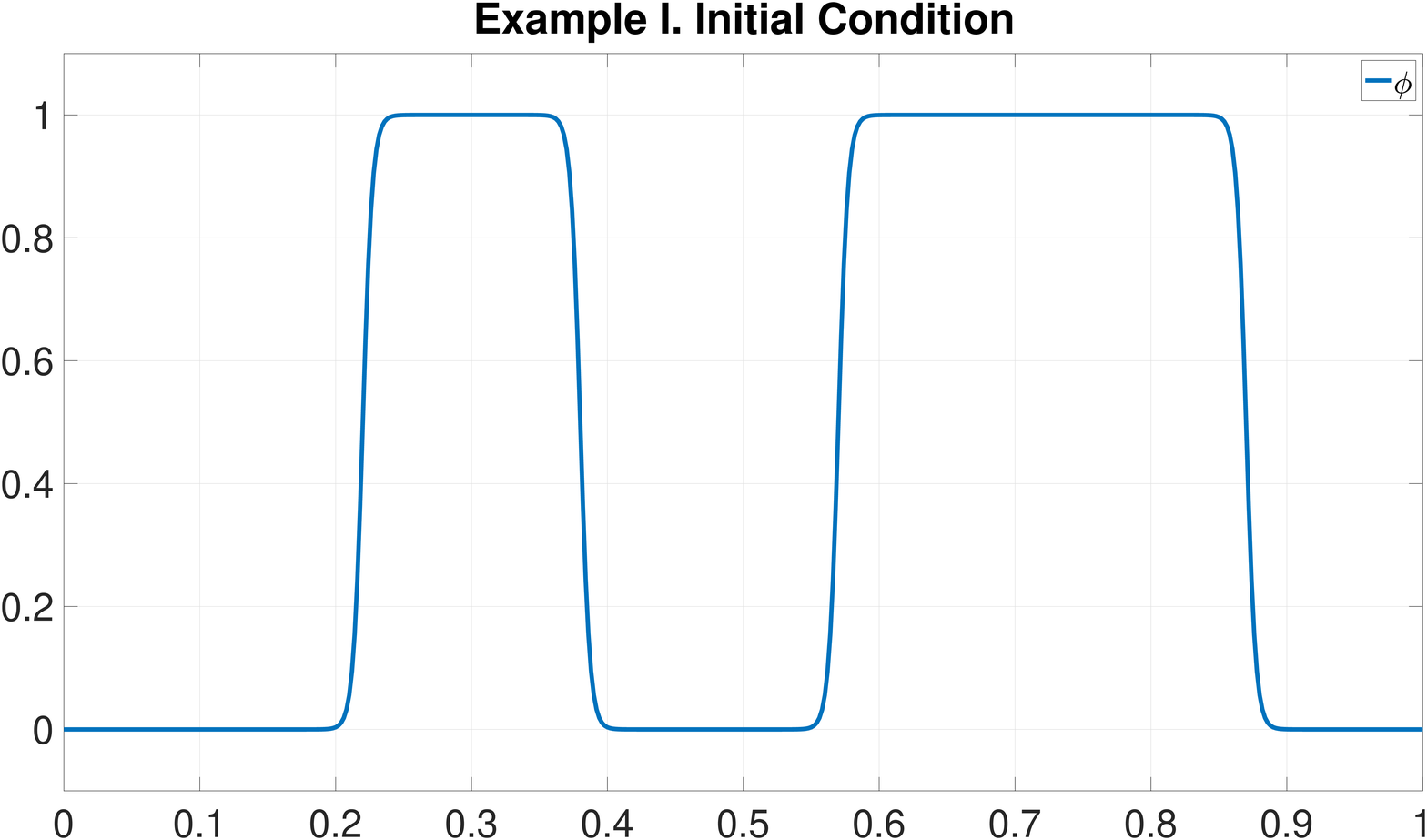}
\includegraphics[scale=0.110]{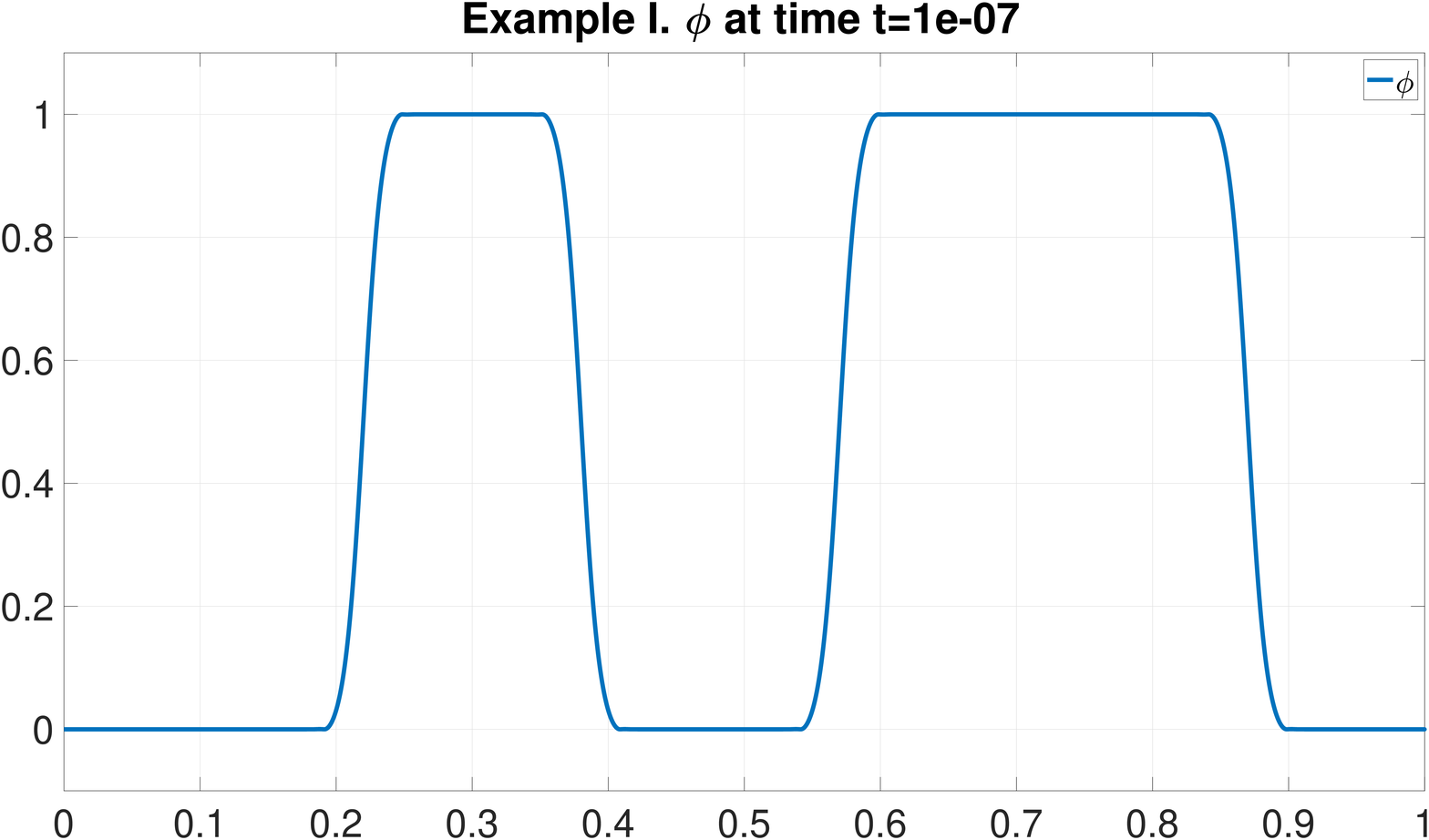}
\end{center}
\caption{Example I. Left: Initial condition $\phi_0(x)$ at $t=0$. Right: $\phi(x,t)$ at $t=10^{-7}$.}\label{fig:exI_initial_final}
\end{figure}

In all the simulations in this section we have considered the time interval $[0,T]=[0,10^{-7}]$ and time step $\Delta t=10^{-10}$. The expected dynamic of the system is that the pure phases $\phi=0$ and $\phi=1$  will not merge in a larger pure phase region because they are too far apart to interact between them, in fact they will just accommodate the width of the interface to reach an equilibrium. This dynamic is completely different from the one associated with constant mobility, where the two pure phases will end up forming larger regions due to the coarsening effects. 

We have separated the presentation of the results into two parts: first we present the comparison of the three schemes with fixed values of $\eta$ and $\varepsilon$ while varying the size of the spatial discretization, with $N (=h^{-1})$ denoting the number of points in the (equally distributed) spatial mesh. In the second step we compare the G$_\varepsilon$ and J$_\varepsilon$ schemes with fixed value of $N$ and varying the values of $\eta$ and $\varepsilon$ in order to see if the estimates derived in Lemmas~\ref{lem:Gapprxbounds} and \ref{lem:Japprxbounds} become apparent in our numerical experiments.

\subsubsection{Comparison of G$_\varepsilon$, J$_\varepsilon$ and M$_0$ schemes. Fixed $\eta=0.005$ and $\varepsilon=10^{-20}$}
In Figure~\ref{fig:ExI_part1} we present comparisons of the evolution in time of the maximum and minimum of $\phi$ for each scheme when different size of the discretization of the spatial mesh is considered (i.e. different values of the number of points $N$). In particular,
since $\eta=0.005$ we focus on cases such that $N\ge 200$, hence $h\le \eta$.

We can observe how M$_0$-scheme is always much less effective in maintaining the values of $\phi$ inside the interval $[0,1]$ than the G$_\varepsilon$ and J$_\varepsilon$ schemes. Moreover, 
the larger $N$
the better the behavior of M$_0$-scheme with respect to the boundedness of $\phi$, achieving equivalent performance to the G$_\varepsilon$ and J$_\varepsilon$ schemes only for the demanding requirement  $N\geq10^4$. This fact is expected because taking $h$ to zero we are getting closer to the only discrete in time version of scheme  \eqref{eq:SchemeM0}, which satisfies the boundedness estimates.

\begin{figure}
\begin{center}
\includegraphics[scale=0.1125]{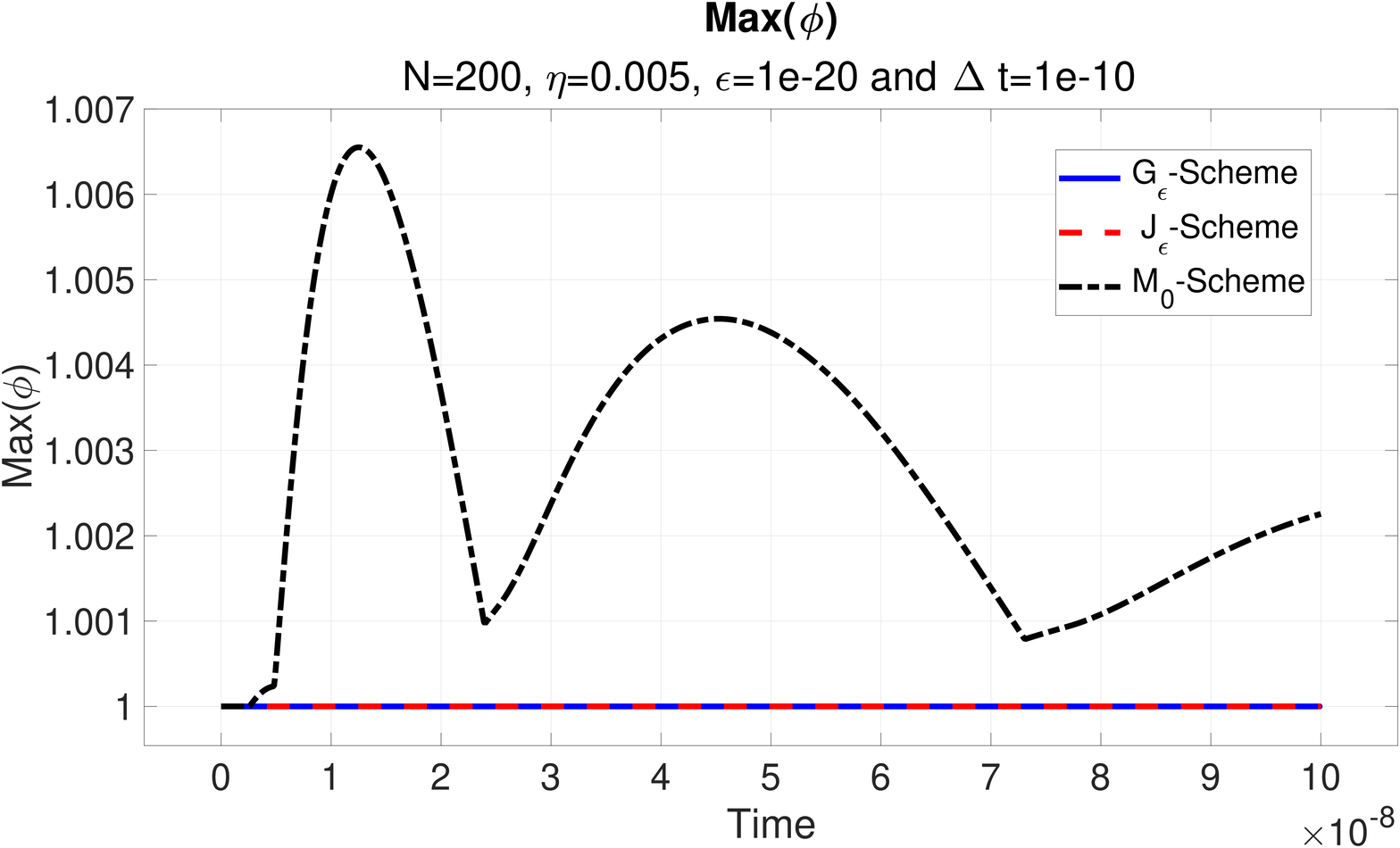}
\includegraphics[scale=0.1125]{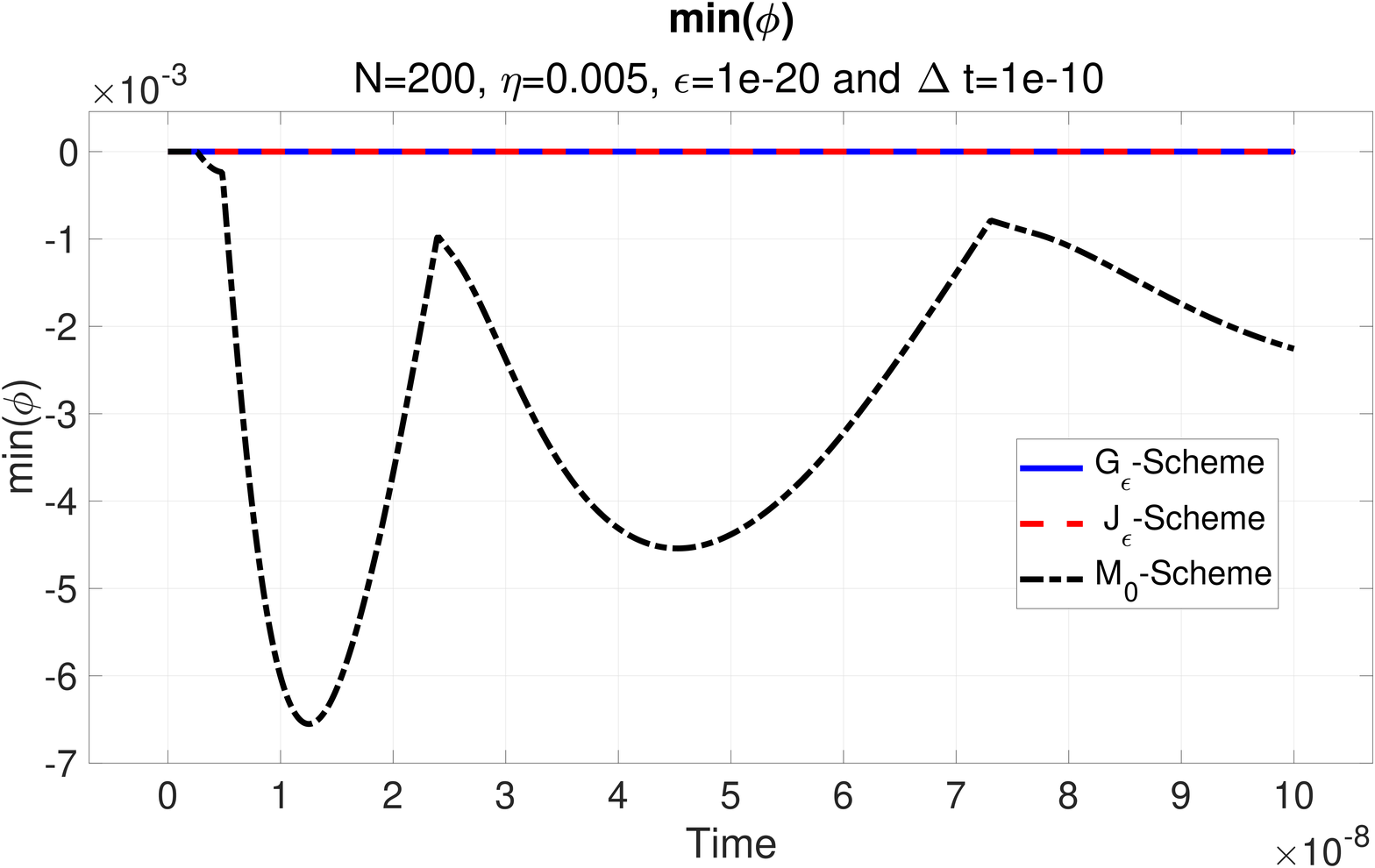}
\\
\includegraphics[scale=0.1125]{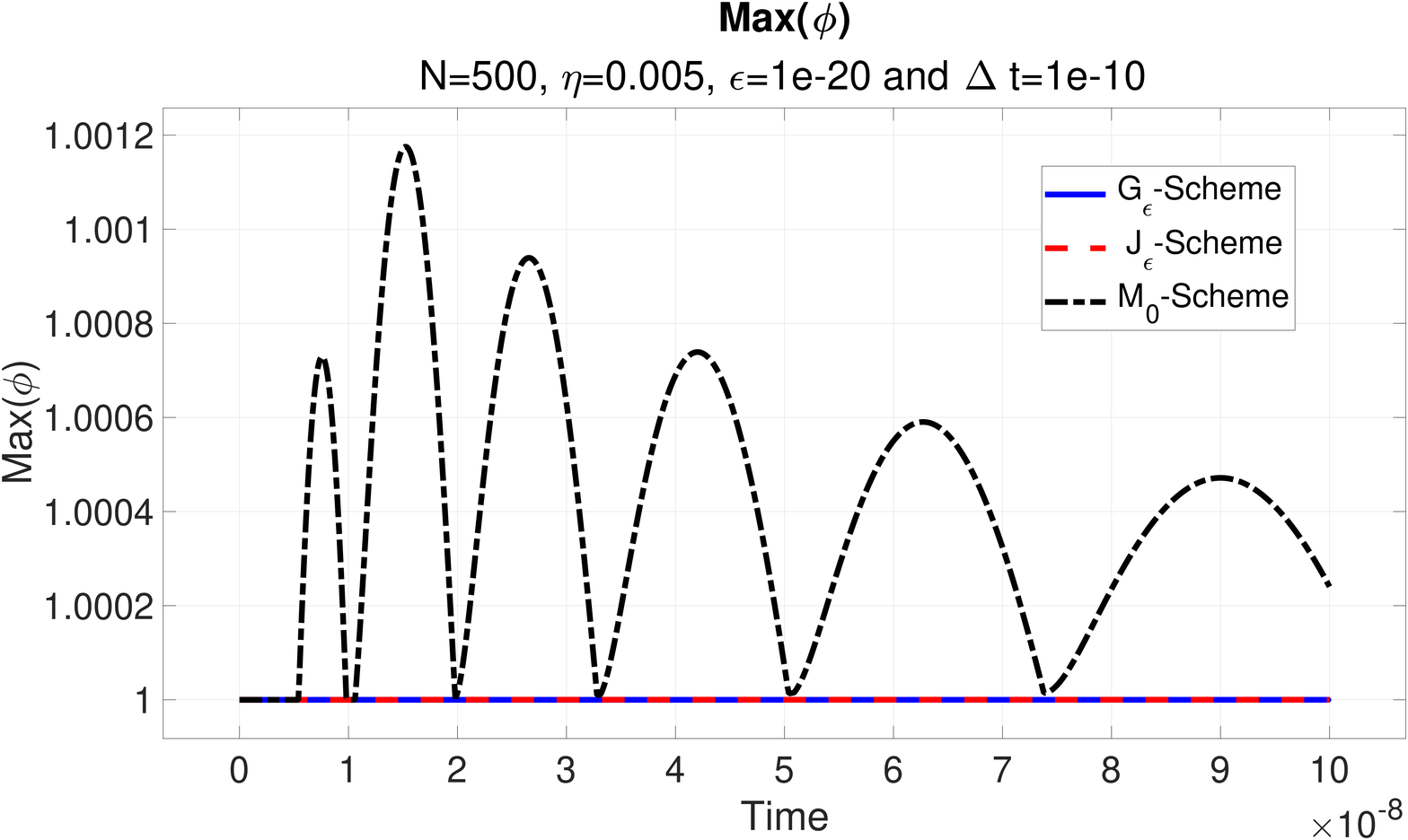}
\includegraphics[scale=0.1125]{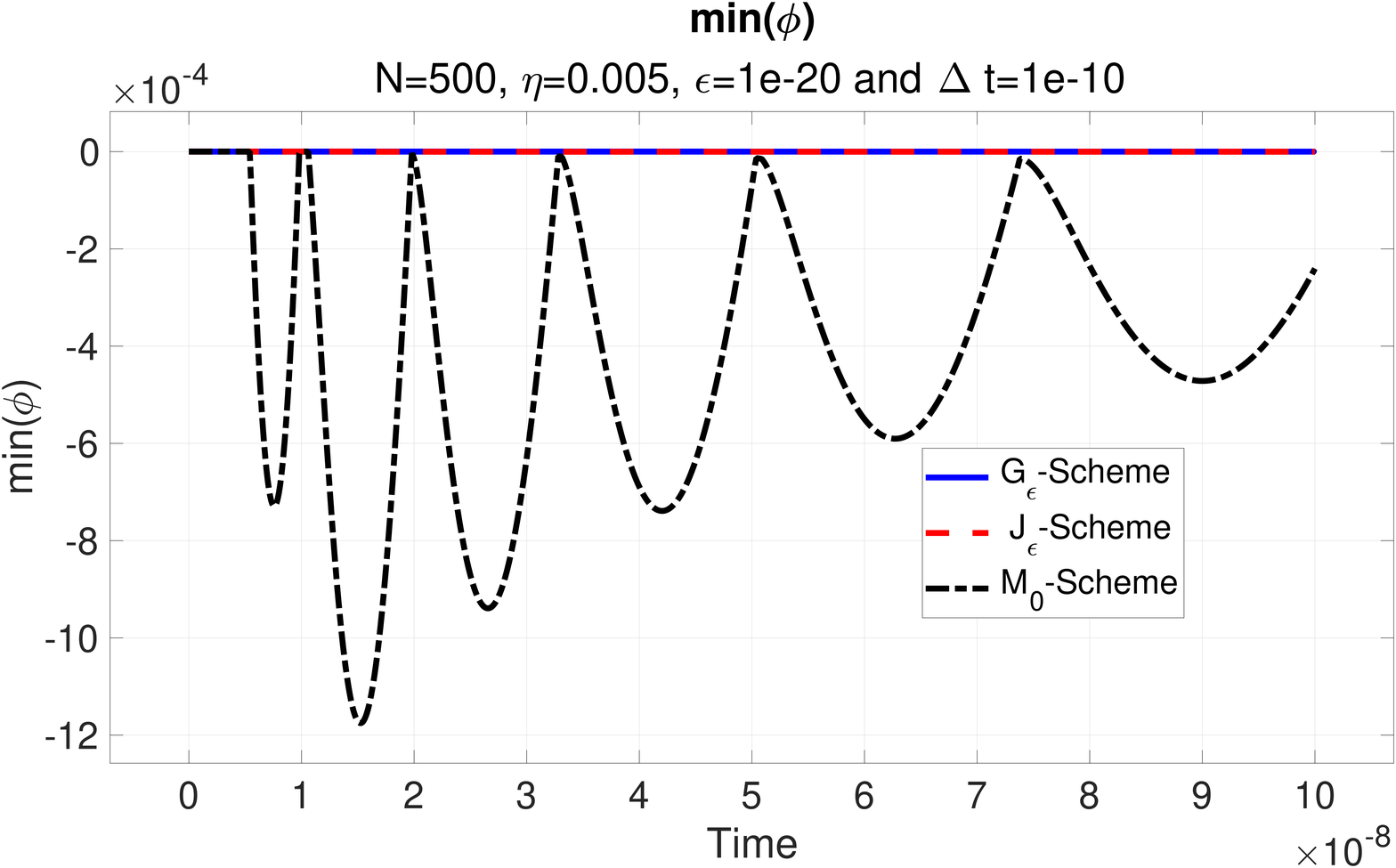}
\\
\includegraphics[scale=0.1125]{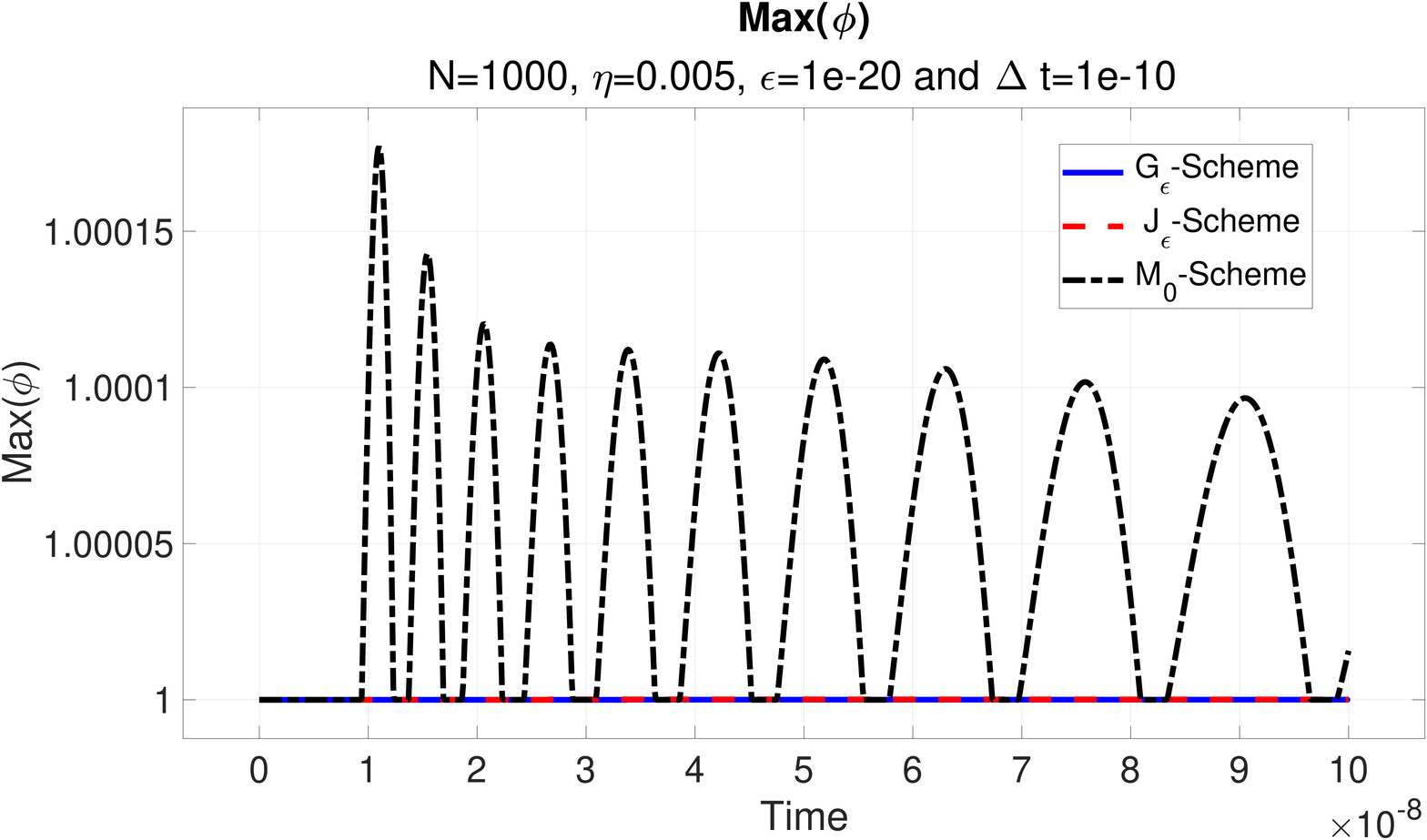}
\includegraphics[scale=0.1125]{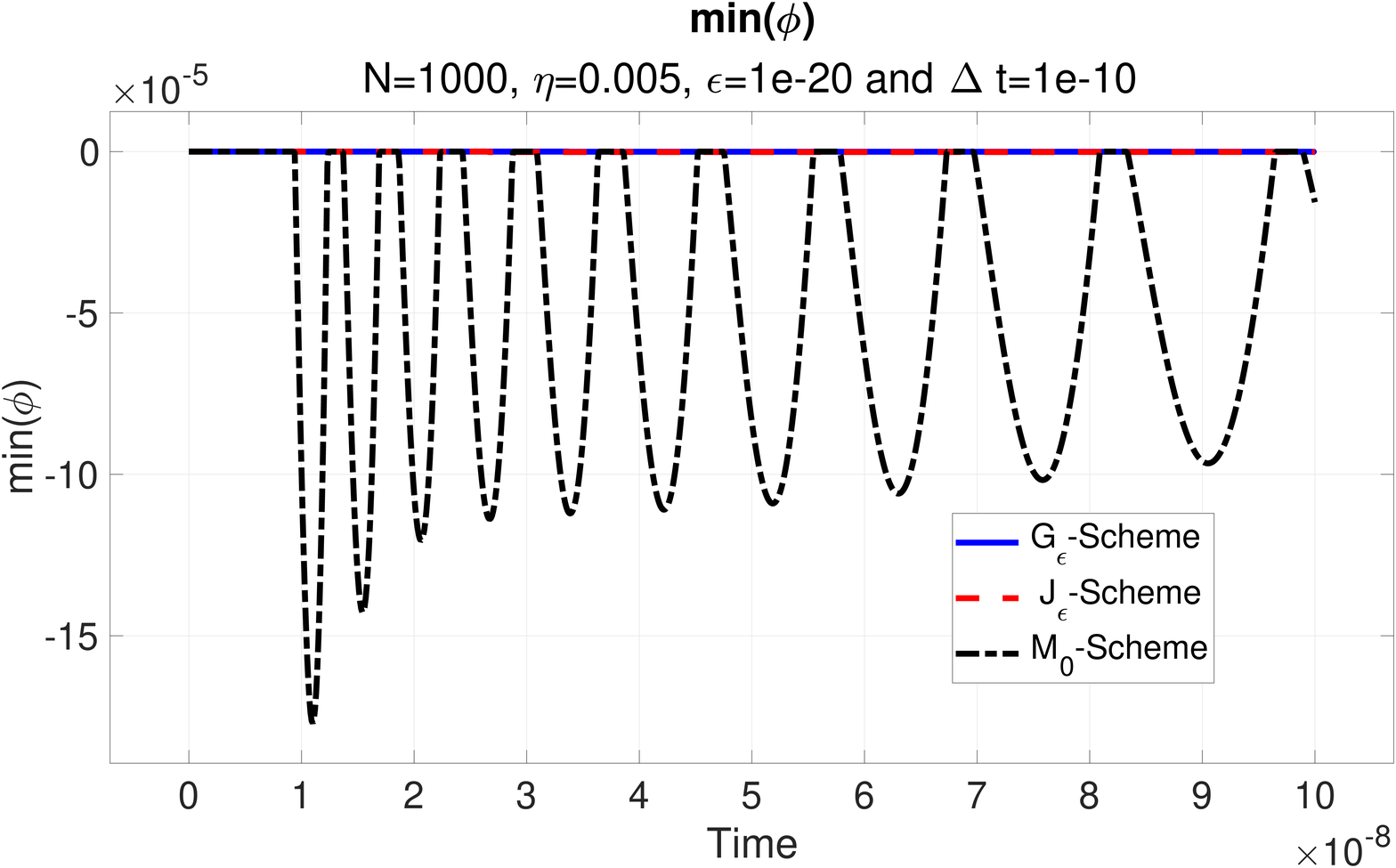}
\\
\includegraphics[scale=0.1125]{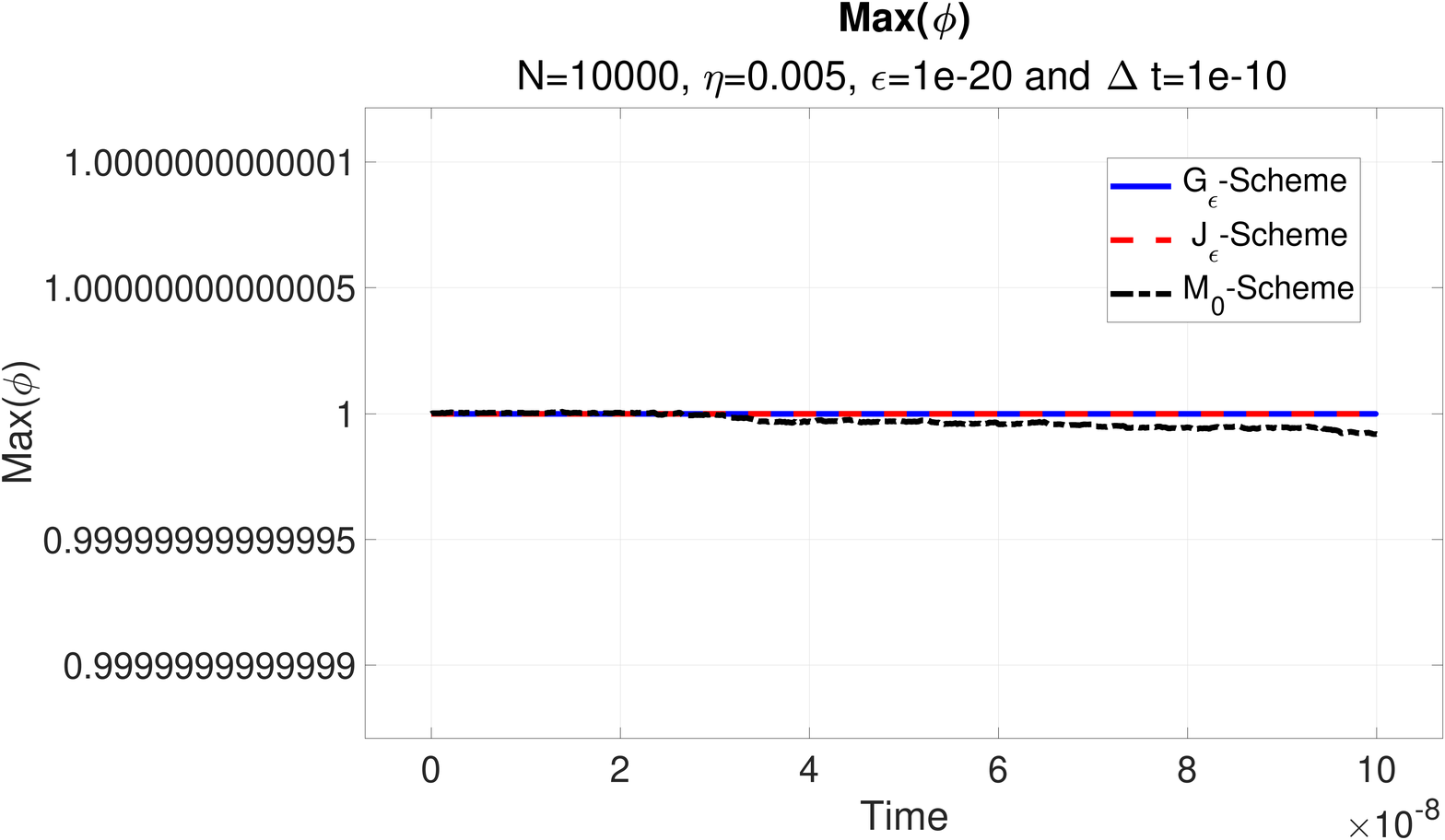}
\includegraphics[scale=0.1125]{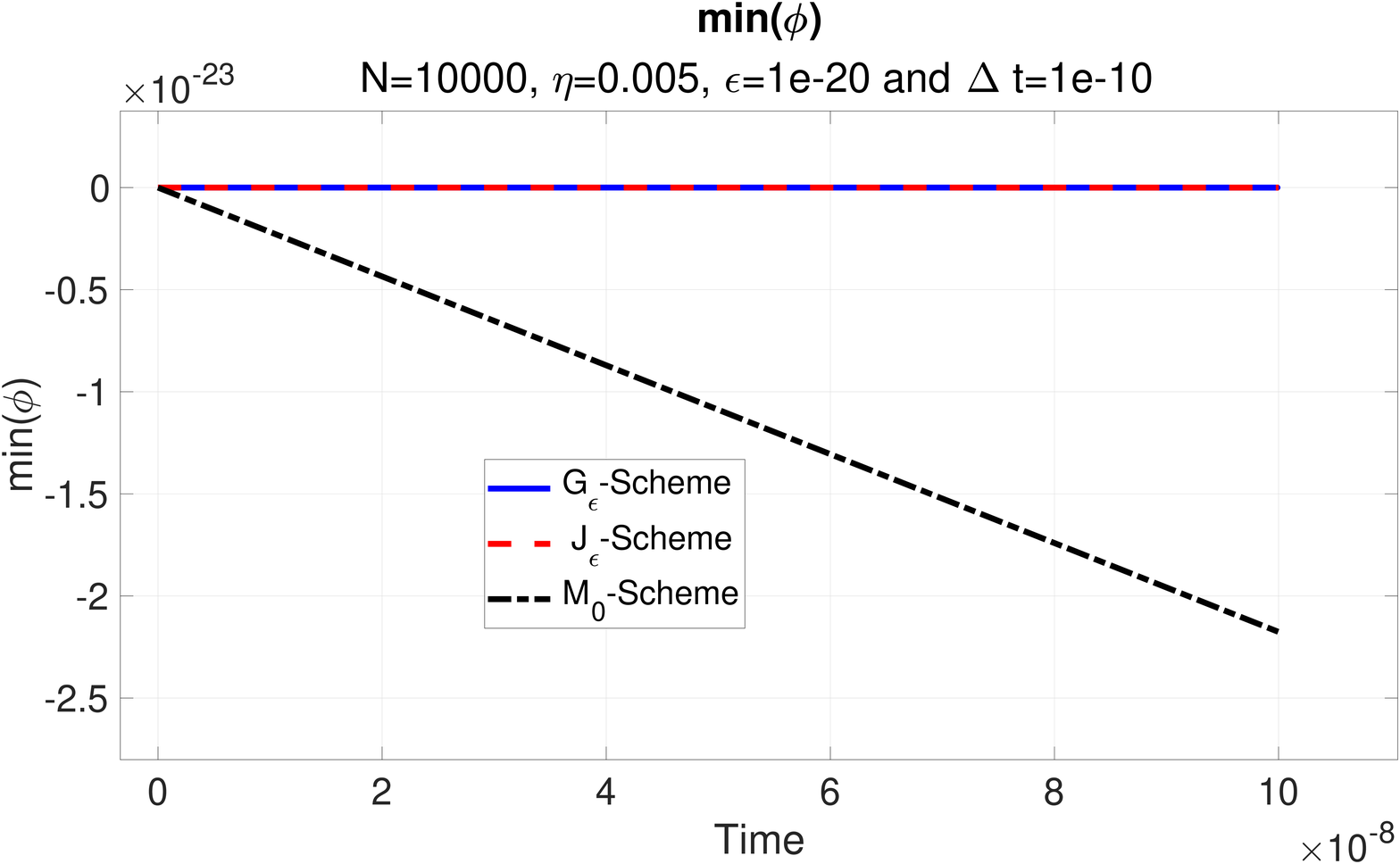}
\end{center}
\caption{Example I. Evolution in time of $\max(\phi)$ (left column) and $\min(\phi)$ (right column) for G$_\varepsilon$-scheme, J$_\varepsilon$-scheme and M$_0$-scheme considering $N=200, 500, 1000$ and $10000$ (from top to bottom).}\label{fig:ExI_part1}
\end{figure}

%

\subsubsection{Study of the influence of $\eta$ and $\varepsilon$ in the boundedness estimates (fixed $N=10^4$ and $\Delta t=10^{-10}$)}

We present in Tables~\ref{tab:Gscheme_boundedness} and \ref{tab:Jscheme_boundedness} the results obtained using G$_\varepsilon$ and J$_\varepsilon$ schemes with different values  of $\eta$ and $\varepsilon$. For $\eta=0.01$ (the largest value of $\eta$ considered) both schemes achieve $\phi<1$ and very small negative  values of $\phi$ when $\varepsilon\leq10^{-3}$. Decreasing the value of $\eta$ from $\eta=0.01$ to $\eta=0.005$ produces that the value of $\varepsilon$ needed to achieve equivalent sharpness of the bounds also decreases, in particular, it is needed to consider $\varepsilon\leq10^{-4}$ with $\eta=0.075$ and $\varepsilon\leq10^{-5}$ with $\eta=0.005$ for both schemes. The main difference here is that larger values of $\varepsilon$ using G$_\varepsilon$-scheme result in no reliable simulations (for some of the larger values of $\varepsilon$ the iterative algorithm does not even converge).
On the other hand, for J$_\varepsilon$-scheme the obtained results are not as sharp as for smaller values of $\varepsilon$, although convergence of the algorithm is achieved in all cases. The different convergence behavior for both schemes can be explained by the boundedness estimates in Corollaries \ref{lem:Gapprxbounds} and \ref{lem:Japprxbounds}, because the bounds for G$_\varepsilon$-scheme degenerate as $\eta$ decreases while the bounds for J$_\varepsilon$-scheme are independent of $\eta$ .

When the value of $\eta$ is reduced even further the situation become more challenging, because a smaller value of $\eta$ results in a narrower interface thickness while using the same discretization parameters $N=10^4$ and $\Delta t =10^{-10}$. For the cases $\eta=0.0025$ and $\eta=0.001$, G$_\varepsilon$-scheme needs to consider $\varepsilon\leq10^{-6}$ to be reliable (again for some of the larger values of $\varepsilon$ the iterative algorithm does not converge). By comparison, J$_\varepsilon$-scheme seems to be much more reliable in these situations for  $\varepsilon\leq10^{-2}$, although the obtained bounds in J$_\varepsilon$-scheme are not as sharp as the ones obtained with larger values $\eta$.

The results of these numerical experiments corroborate that the boundedness estimates for both schemes improve when considering lower values of $\varepsilon$ and in the case of G$_\varepsilon$-scheme the truncation parameter $\varepsilon$ should be smaller than $\eta^2$ to be effective.

\begin{table}
\begin{tabular}{|c|cc|cc|cc|}                      
\hline                                                                    
G$_\varepsilon(\phi)$&  \multicolumn{2}{c|}{$\eta=0.01$}  & \multicolumn{2}{c|}{$\eta=0.0075$} & \multicolumn{2}{c|}{$\eta=0.005$} 
\\ 
\hline                                                                    
$\varepsilon$& $\dis\max_{t\in[0,T]}\max_{x\in\Omega}\phi$  & $\dis\min_{t\in[0,T]}\min_{x\in\Omega}\phi$  & $\dis\max_{t\in[0,T]}\max_{x\in\Omega}\phi$  & $\dis\min_{t\in[0,T]}\min_{x\in\Omega}\phi$ & $\dis\max_{t\in[0,T]}\max_{x\in\Omega}\phi$  & $\dis\min_{t\in[0,T]}\min_{x\in\Omega}\phi$ 
\\ \hline                                                                                                                                                               
$10^{-2}$ & $1.00001$ & $  -1.9\times10^{-3}  $ & $1.0004 $ & $ -6.0\times10^{-3} $ & $1.0033 $ & $ -7.05\times10^{-3} $ 
\\ \hline
$10^{-3}$ & $0.99999$ & $ -3.07\times10^{-14} $ & $\times $ & $ \times$ & $\times $ & $ \times $  
\\ \hline
$10^{-4}$ & $0.99999$ & $  -3.07\times10^{-14} $ & $0.99999 $ & $ -7.33\times10^{-18} $ & $\times $ & $ \times $ 
\\ \hline
$10^{-5}$ & $0.99999$ & $ -3.07\times10^{-14}$ & $ 0.99999$ & $ -5.79\times10^{-18} $ & $1.0 + 10^{-14} $ & $ -6.73\times10^{-18} $ 
\\ \hline
$10^{-6}$ & $0.99999$ & $ -3.07\times10^{-14} $ & $0.99999 $ & $ -5.85\times10^{-18} $ & $1.000000 $ & $ -5.81\times10^{-18} $  
\\ \hline
$10^{-7}$ & $0.99999$ & $ -3.07\times10^{-14} $ & $0.99999 $ & $ -5.76\times10^{-18} $ & $1.000000 $ & $ -5.84\times10^{-18} $ 
\\ \hline
$10^{-8}$ & $0.99999$ & $ -3.07\times10^{-14} $ & $0.99999 $ & $ -5.72\times10^{-18} $ & $1.000000 $ & $ -5.72\times10^{-18} $ 
\\ \hline
\end{tabular}
\begin{tabular}{|c|cc|cc|}       
\hline               
 G$_\varepsilon(\phi)$& \multicolumn{2}{c|}{$\eta=0.0025$} & \multicolumn{2}{c|}{$\eta=0.001$} 
 \\ \hline    
 $\varepsilon$&$\dis\max_{t\in[0,T]}\max_{x\in\Omega}\phi$  & $\dis\min_{t\in[0,T]}\min_{x\in\Omega}\phi$  & $\dis\max_{t\in[0,T]}\max_{x\in\Omega}\phi$  & $\dis\min_{t\in[0,T]}\min_{x\in\Omega}\phi$ 
\\ \hline                                                                                                                                                               
$10^{-2}$ & $1.024$ & $  - 6.9\times10^{-3}  $ & $\times$ & $ \times $
\\ \hline
$10^{-3}$ & $1.0008$ & $ - 2.0\times10^{-3} $ & $\times $ & $ \times$ 
\\ \hline
$10^{-4}$ & $\times$ & $  \times $ & $\times $ & $  \times  $\\ \hline
$10^{-5}$ & $1.0943$ & $ - 5.6\times10^{-5}$ & $1.0244 $ & $  -6.38\times10^{-5} $ 
\\ \hline
$10^{-6}$ & $1.000009$ & $ -9.72 \times10^{-6} $ & $1.0007 $ & $ -1.08\times10^{-5}  $
\\ \hline
$10^{-7}$ & $1.0000014$ & $ - 1.43\times10^{-6} $ & $1.000024 $ & $ -6.07\times10^{-6}  $
\\ \hline
$10^{-8}$ & $1.0000012$ & $ -1.26 \times10^{-6} $ & $1.000006 $ & $ -6.73\times10^{-6}  $
\\ \hline
\end{tabular}
\caption{Example I. G$_\varepsilon$-scheme. Evolution in time of the minimum and maximum values achieved by $\phi$ in the domain $\Omega$ in the whole interval $[0,T]$ for different values of $\eta$ and $\varepsilon$ with fixed values of the discretization parameters $N=10000$ and $\Delta t=10^{-10}$. Symbol $\times$ denotes that at some point the iterative method did not converge.}\label{tab:Gscheme_boundedness}
\end{table}
\begin{table}
\begin{tabular}{|c|cc|cc|cc|}                      
\hline                                                                    
J$_\varepsilon(\phi)$&  \multicolumn{2}{c|}{$\eta=0.01$}  & \multicolumn{2}{c|}{$\eta=0.0075$} & \multicolumn{2}{c|}{$\eta=0.005$} 
\\ 
\hline                                                                    
$\varepsilon$& $\dis\max_{t\in[0,T]}\max_{x\in\Omega}\phi$  & $\dis\min_{t\in[0,T]}\min_{x\in\Omega}\phi$  &$\dis\max_{t\in[0,T]}\max_{x\in\Omega}\phi$  & $\dis\min_{t\in[0,T]}\min_{x\in\Omega}\phi$ & $\dis\max_{t\in[0,T]}\max_{x\in\Omega}\phi$  & $\dis\min_{t\in[0,T]}\min_{x\in\Omega}\phi$
\\ \hline                                                                                                                                                               
$10^{-2}$ & $1.00195$ & $  -1.9\times10^{-3}  $ & $1.00594 $ & $ -5.94\times10^{-3} $ & $1.00693 $ & $ -6.93\times10^{-3} $ 
\\ \hline
$10^{-3}$ & $0.999999$ & $  -3.7\times10^{-14}  $ & $1.00112 $ & $ -1.12\times10^{-3} $ & $1.00201 $ & $ -2.01\times10^{-3} $  
\\ \hline
$10^{-4}$ & $0.999999$ & $  -3.7\times10^{-14}  $ & $0.999999 $ & $ -7.31\times10^{-18} $ & $1.00022 $ & $ -2.26\times10^{-4} $ 
\\ \hline
$10^{-5}$ & $0.999999$ & $  -3.7\times10^{-14}  $ & $0.999999 $ & $ -5.79\times10^{-18} $ & $1.0 + 7\times10^{-15} $ & $ -6.73\times10^{-18} $ 
\\ \hline
$10^{-6}$ & $0.999999$ & $  -3.7\times10^{-14}  $ & $0.999999 $ & $ -5.85\times10^{-18} $ & $1.00000 $ & $ -5.81\times10^{-18} $   
\\ \hline
$10^{-7}$ & $0.999999$ & $  -3.7\times10^{-14}  $ & $0.999999 $ & $ -5.76\times10^{-18} $ & $1.00000 $ & $ -5.84\times10^{-18} $  
\\ \hline
$10^{-8}$ & $0.999999$ & $  -3.7\times10^{-14}  $ & $0.999999 $ & $ -5.72\times10^{-18} $ & $1.00000 $ & $ -5.72\times10^{-18} $  
\\ \hline
\end{tabular}
\begin{tabular}{|c|cc|cc|}       
\hline               
 J$_\varepsilon(\phi)$& \multicolumn{2}{c|}{$\eta=0.0025$} & \multicolumn{2}{c|}{$\eta=0.001$} 
 \\ \hline    
 $\varepsilon$& $\dis\max_{t\in[0,T]}\max_{x\in\Omega}\phi$  & $\dis\min_{t\in[0,T]}\min_{x\in\Omega}\phi$  &$\dis\max_{t\in[0,T]}\max_{x\in\Omega}\phi$  & $\dis\min_{t\in[0,T]}\min_{x\in\Omega}\phi$ 
\\ \hline                                                                                                                                                               
$10^{-2}$ & $1.00679$ & $  -6.79\times10^{-3}  $ & $1.0044 $ & $ -0.0044 $ 
\\ \hline
$10^{-3}$ & $ 1.00203$ & $  -2.03\times10^{-3}  $ & $1.0017 $ & $ -0.0017 $ 
\\ \hline
$10^{-4}$ & $1.00033$ & $  -3.39\times10^{-4}  $ & $1.0004 $ & $ -3.66\times10^{-4}  $ 
\\ \hline
$10^{-5}$ & $1.000054$ & $  -5.45\times10^{-5}  $ & $1.0001 $ & $ -6.47\times10^{-5} $
\\ \hline
$10^{-6}$ & $ 1.000009$ & $  -9.7\times10^{-6}  $ &$1.000012 $ & $ -1.207\times10^{-5} $
\\ \hline
$10^{-7}$ & $1.000001$ & $  -1.43\times10^{-6}  $ & $1.000015 $ & $ -1.58\times10^{-5} $ 
\\ \hline
$10^{-8}$ & $1.000001$ & $  -1.25\times10^{-6}  $ & $1.0000088 $ & $ -8.81\times10^{-6} $
\\ \hline
\end{tabular}
\caption{Example I. J$_\varepsilon$-scheme. Evolution in time of the minimum and maximum values achieved by $\phi$ in the domain $\Omega$ in the whole interval $[0,T]$ for different values of $\eta$ and $\varepsilon$ with fixed values of the discretization parameters $N=10000$ and $\Delta t=10^{-10}$.}\label{tab:Jscheme_boundedness}\end{table}

\subsection{Example II. Accuracy study}\label{sec:sim_orderconver}
In this second example we estimate numerically the order of convergence in space of the presented numerical schemes. We
consider the parameters values $\eta=0.005$ and $\varepsilon=10^{-20}$. We now introduce some additional notation. The individual errors using discrete norms and the convergence rate between two 
spatial meshes of size $h$ and $\widetilde{h}$ are defined as
$$
e_2(\phi):=\|\phi_{exact} - \phi_h\|_{L^2(\Omega)}\,,
\quad
\mbox{ and }
\quad
r_2(\phi):=\left[\log\left(\frac{e_2(\phi)}{\tilde{e}_2(\phi)}\right)\right]\Big/ \left[\log\left(\frac{h}{\widetilde{h}}\right)\right]\,.
$$

We consider the time step set to $\Delta t=10^{-10}$ and the time interval $[0,T]=[0,10^{-7}]$. The initial configuration  $\phi_0$ is determined by \eqref{eq:initialexa1}, the same configuration that was used in Example I (initial and final configuration of $\phi$ are presented in Figure~\ref{fig:exI_initial_final}).
We have considered this challenging configuration where there are regions with $\phi=0$ and $\phi=1$.
We compute the EOC using as reference (or exact) solution the one obtained by solving the system using the spatial mesh size $h=10^{-5}$ using G$_\varepsilon$-scheme. The errors and orders of convergence are presented in Table~\ref{tab:Exa_II_Exp_II_order}. The results from the two experiments show that  G$_\varepsilon$-scheme and J$_\varepsilon$-scheme achieve the optimal order of $\mathcal{O}(h^2)$ as the classical $M_0(\phi)$-scheme, that is, the construction of the proposed boundedness preserving numerical schemes do not cause to lose order of convergence.

\begin{table}
\begin{tabular}{|c|cc|cc|cc|}                      
\hline                                                                    
&  \multicolumn{2}{c|}{G$_\varepsilon$-Scheme}  & \multicolumn{2}{c|}{J$_\varepsilon$-Scheme}  & \multicolumn{2}{c|}{M$_0$-Scheme} 
\\ 
\hline                                                                    
$N (=h^{-1})$& $e_2(\phi)$  & $r_2(\phi)$  & $e_2(\phi)$ & $r_2(\phi)$ & $e_2(\phi)$ & $r_2(\phi)$ 
\\ \hline                                                                                                                                                               
$2000$ 
& $0.000013314$ & $ - $ 
& $0.000008683$ & $ - $ 
& $0.000009283$ & $ - $
\\ \hline
$3000$ 
& $0.000005907$ & $2.004047$ 
& $0.000003863$ & $1.997343$ 
& $0.000004086$ & $2.024024$
\\ \hline
$4000$ 
& $0.000003330$ & $1.992257$ 
& $0.000002192$ & $1.970207$ 
& $0.000002309$ & $1.982729$
\\ \hline
$5000$ 
& $0.000002142$ & $1.976693$ 
& $0.000001416$ & $1.957809$ 
& $0.000001494$ & $1.950956$
\\ \hline
$6000$ 
& $0.000001500 $ & $1.953188$ 
& $0.000000990 $ & $1.963118$ 
& $0.000001053 $ & $1.917623$ 
\\ \hline
\end{tabular}
\caption{Example II.  Experimental absolute errors and accuracy order.
}\label{tab:Exa_II_Exp_II_order}
\end{table}

\subsection{Example III. Spinodal decomposition}

In this example we illustrate the validity of the schemes to capture realistic dynamics. To this end we have only used G$_\varepsilon$-scheme with parameter $\varepsilon=10^{-20}$, because we have seen in Example I that with this choice of the truncation parameter both schemes G$_\varepsilon$ and J$_\varepsilon$ behaves equivalently. The time interval is $[0,10^{-3}]$ and the rest of the considered parameters are $\eta=0.005$, $N=10^3$ and $\Delta t=10^{-8}$
We have simulated a spinodal decomposition dynamics, that is, we consider initially that the two components of our system are very mixed by taking as initial condition a random perturbation of amplitude $0.01$ of the constant value $\phi=0.5$. In order to decrease the energy of the system, the dynamics are expected to pull the values of $\phi$ close to the end points of the interval $[0,1]$ and at the same time try to reduce the amount of interface in the system. We can observe in Figure~\ref{fig:ExIII_dynamic} how the results of our simulations correspond with the expected dynamics. Moreover we can observe in Figure~\ref{fig:ExIII_plots} the evolution in time of: the energy (always decreasing), the volume (always constant),  $\dis\max_{x\in\Omega}\phi$ (always $\leq 1$) and $\dis\min_{x\in\Omega}\phi$ (always $\geq 0$), in fact in this example
$$
\max_{t\in[0,10^{-3}]}\max_{x\in\Omega}\phi= 0.999624
\quad\quad\quad
\mbox{ and }
\quad\quad\quad
\min_{t\in[0,10^{-3}]}\min_{x\in\Omega}\phi=1.58\times10^{-4}\,.
$$

\begin{figure}
\begin{center}
\includegraphics[scale=0.114]{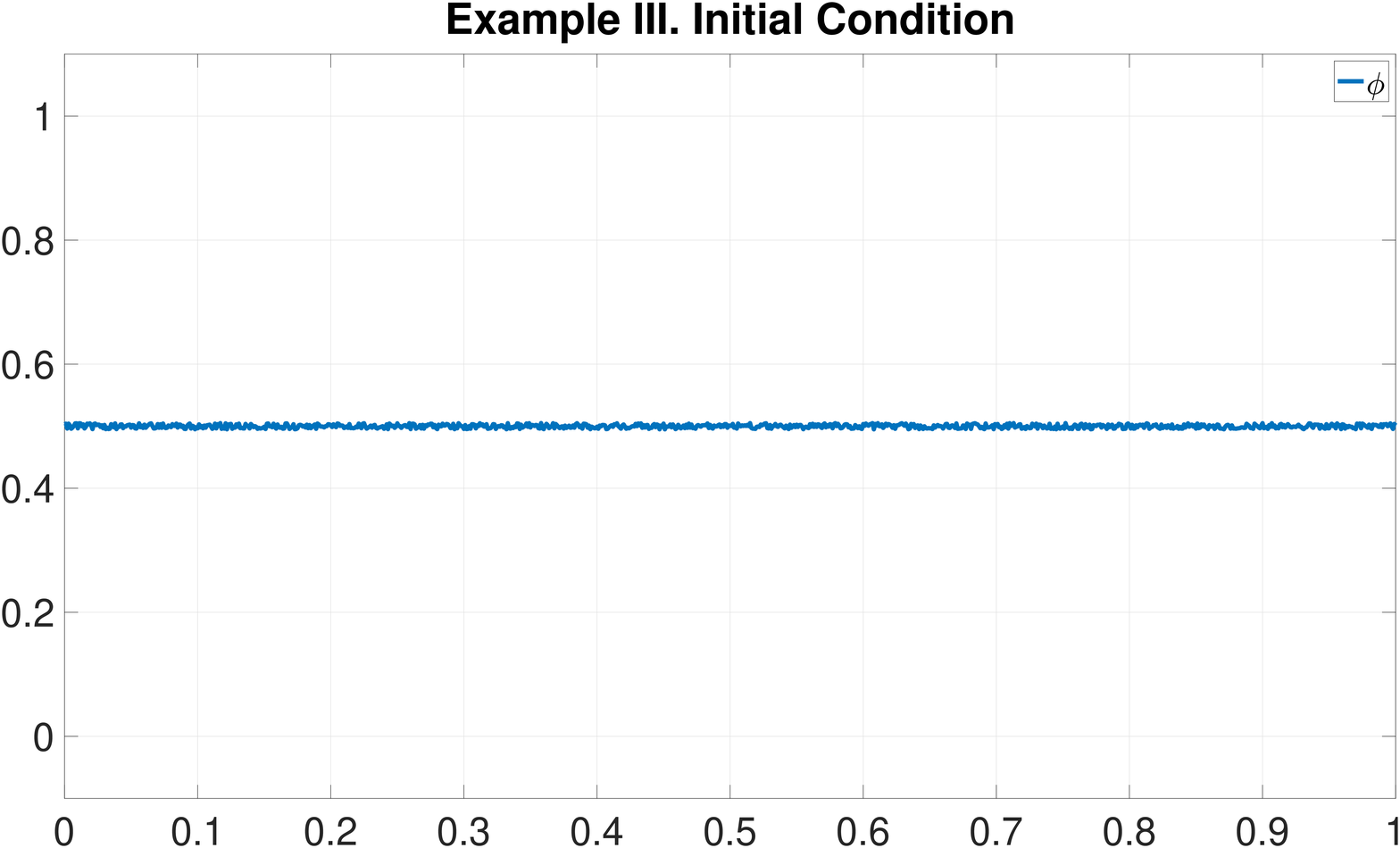}
\includegraphics[scale=0.114]{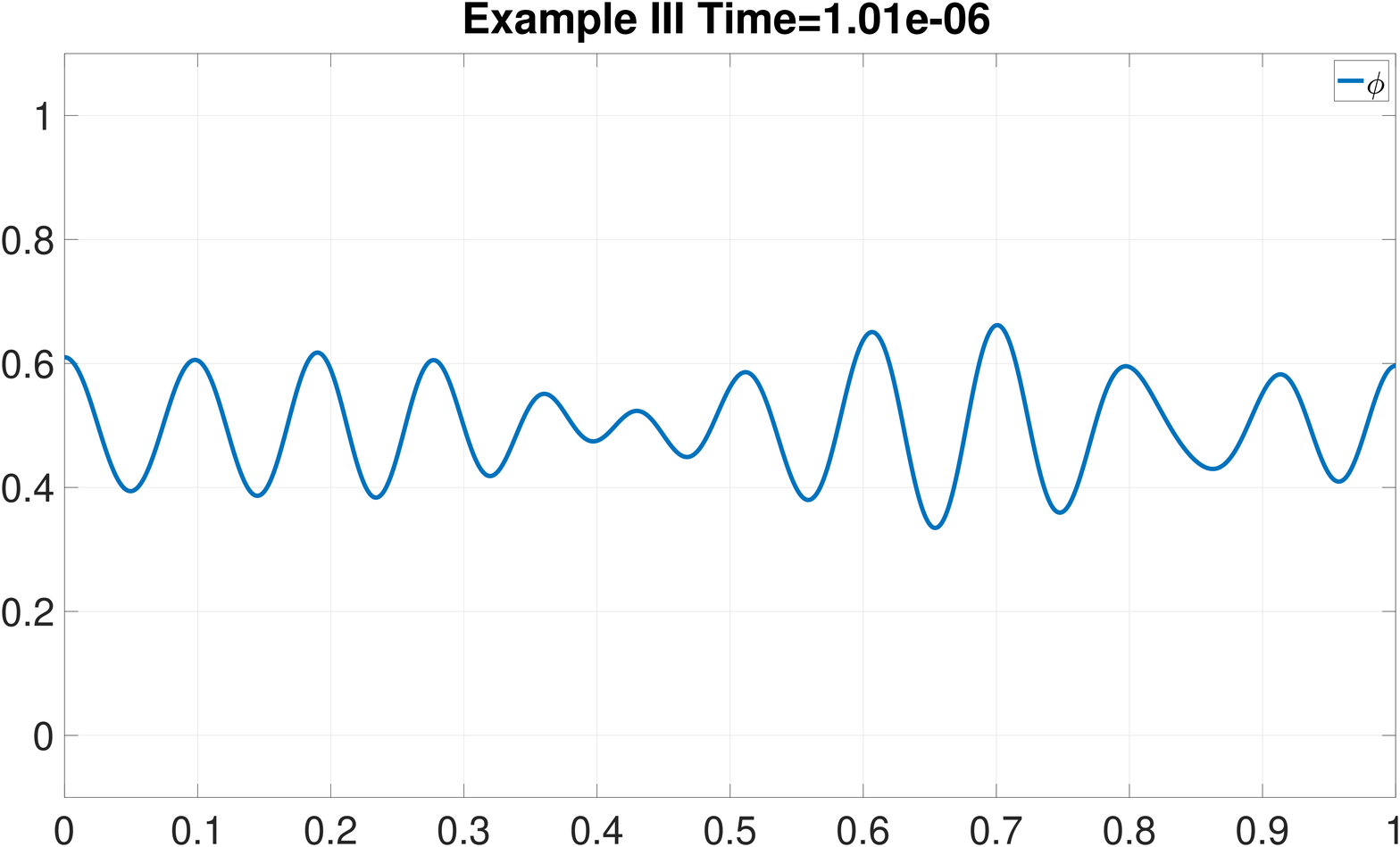}
\\
\includegraphics[scale=0.114]{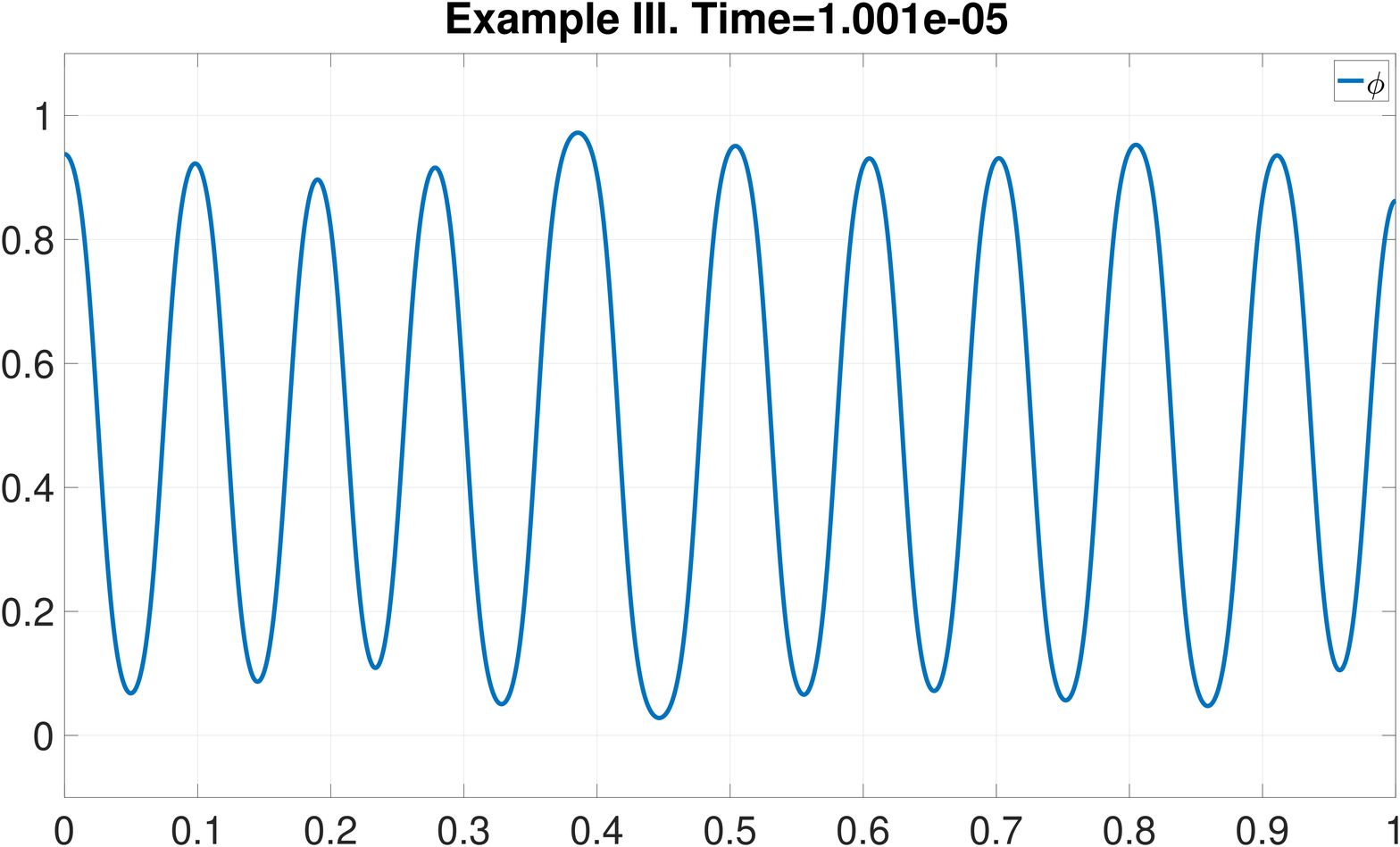}
\includegraphics[scale=0.114]{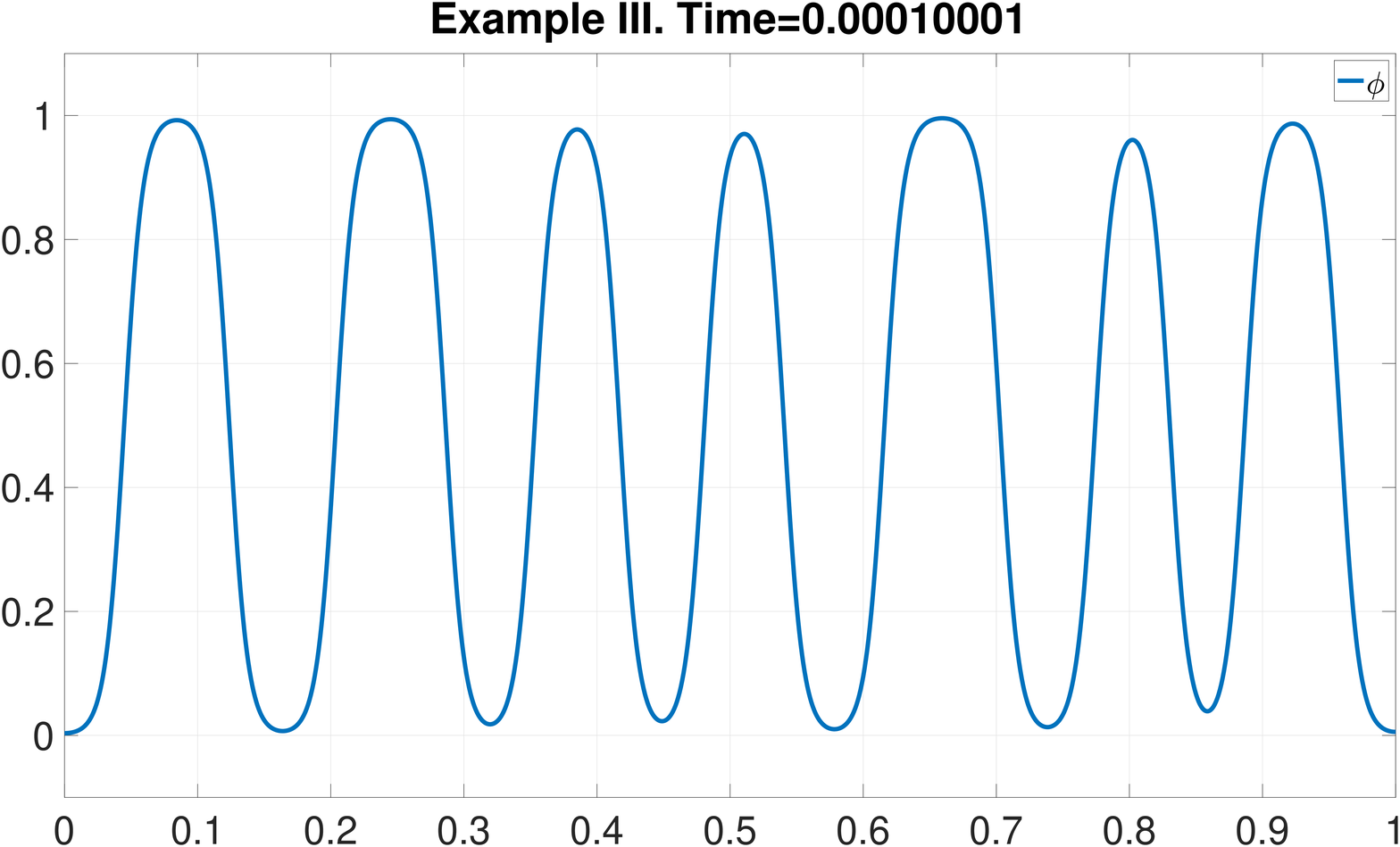}
\\
\includegraphics[scale=0.114]{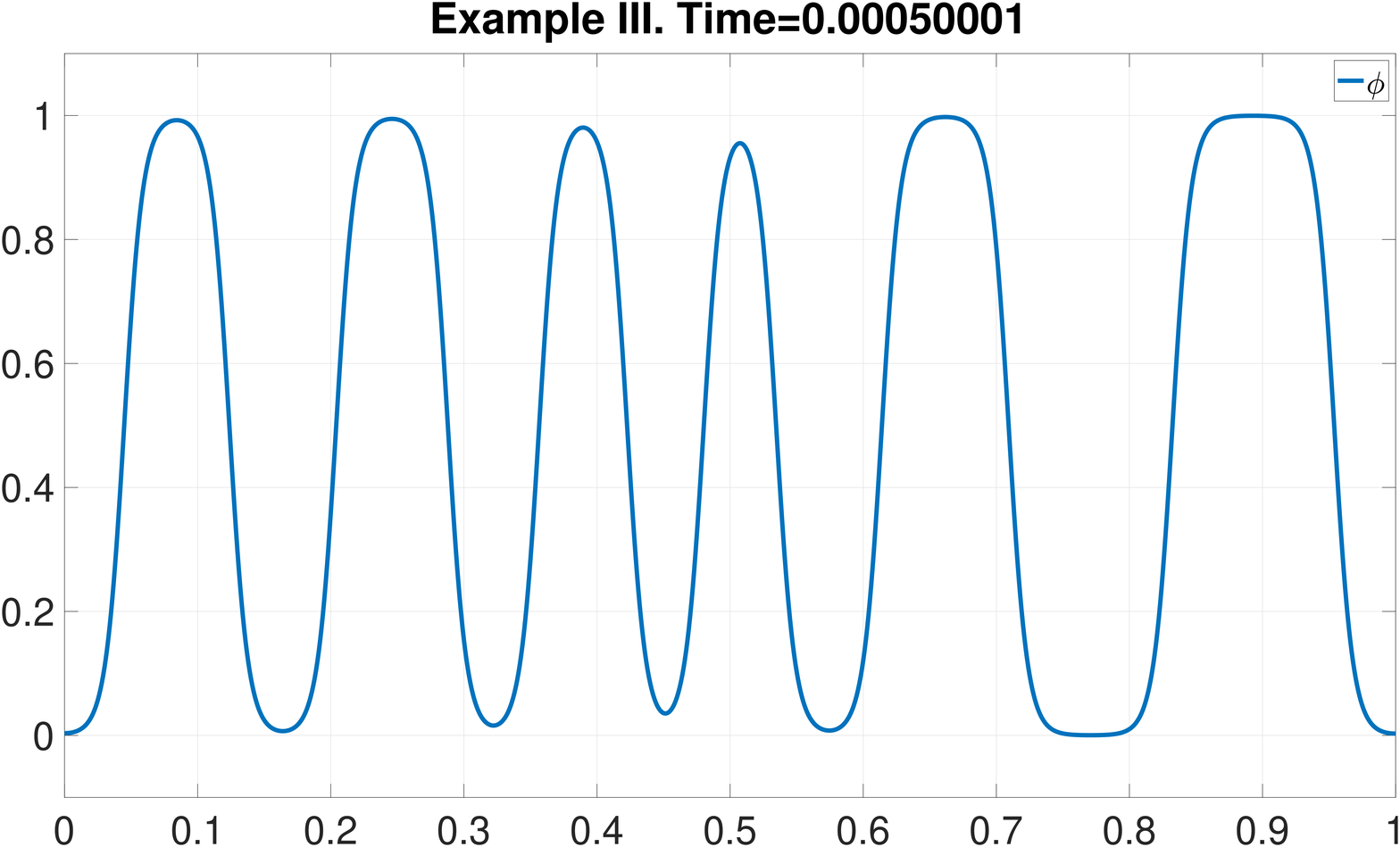}
\includegraphics[scale=0.114]{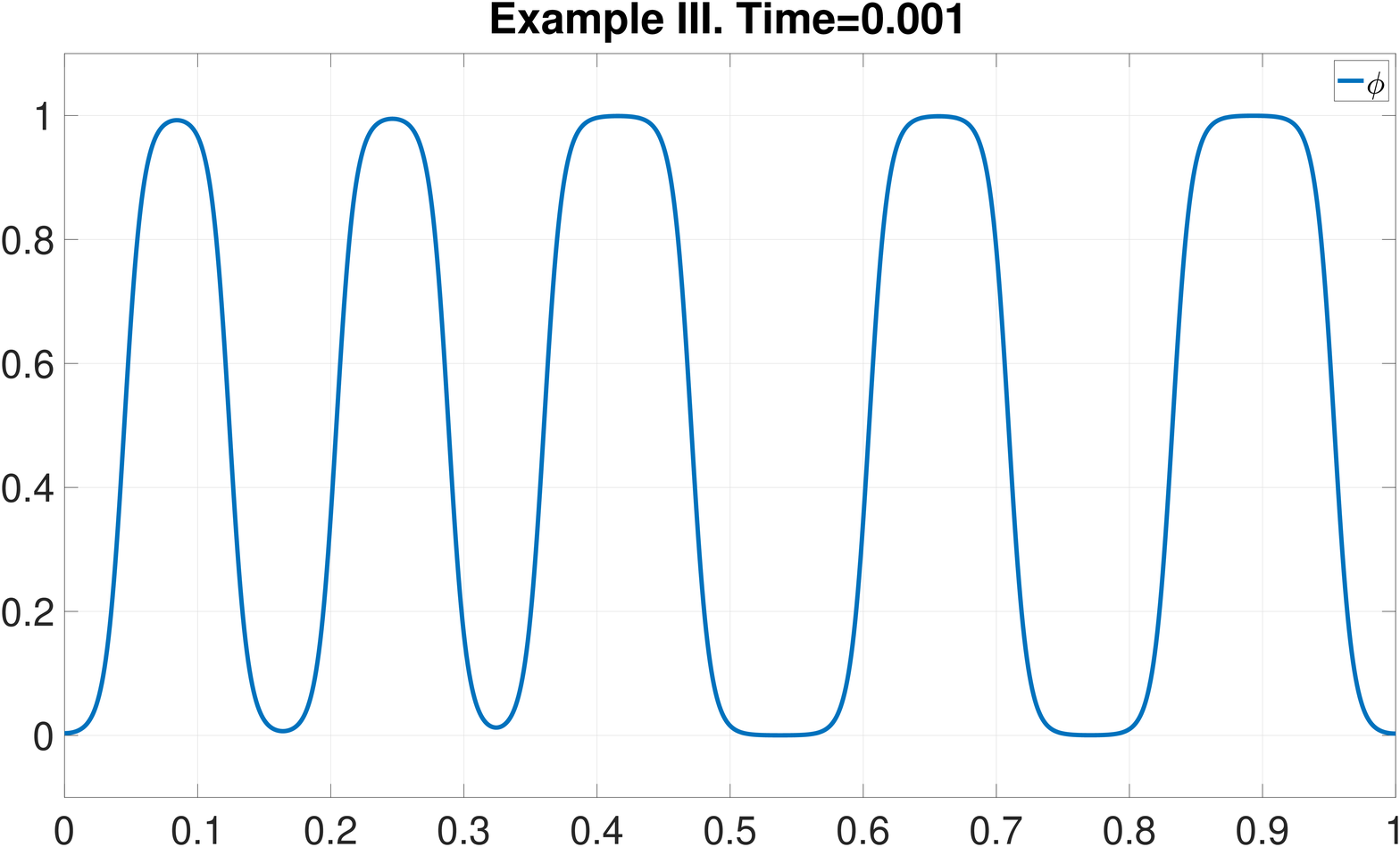}
\end{center}
\caption{Example III. Evolution in time of $\phi$ for G$_\varepsilon$-scheme}\label{fig:ExIII_dynamic}
\end{figure}

\begin{figure}
\begin{center}
\includegraphics[scale=0.114]{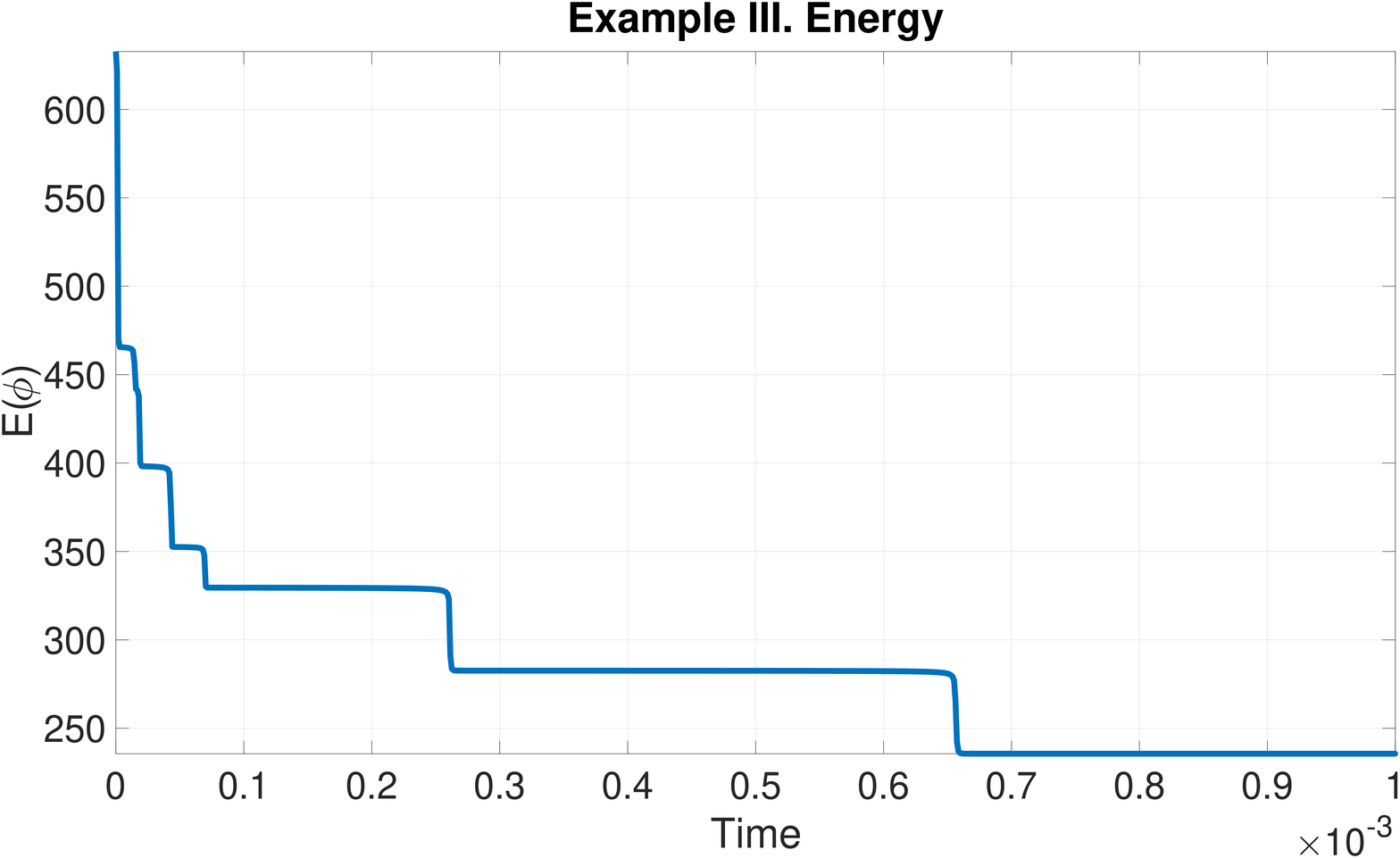}
\includegraphics[scale=0.114]{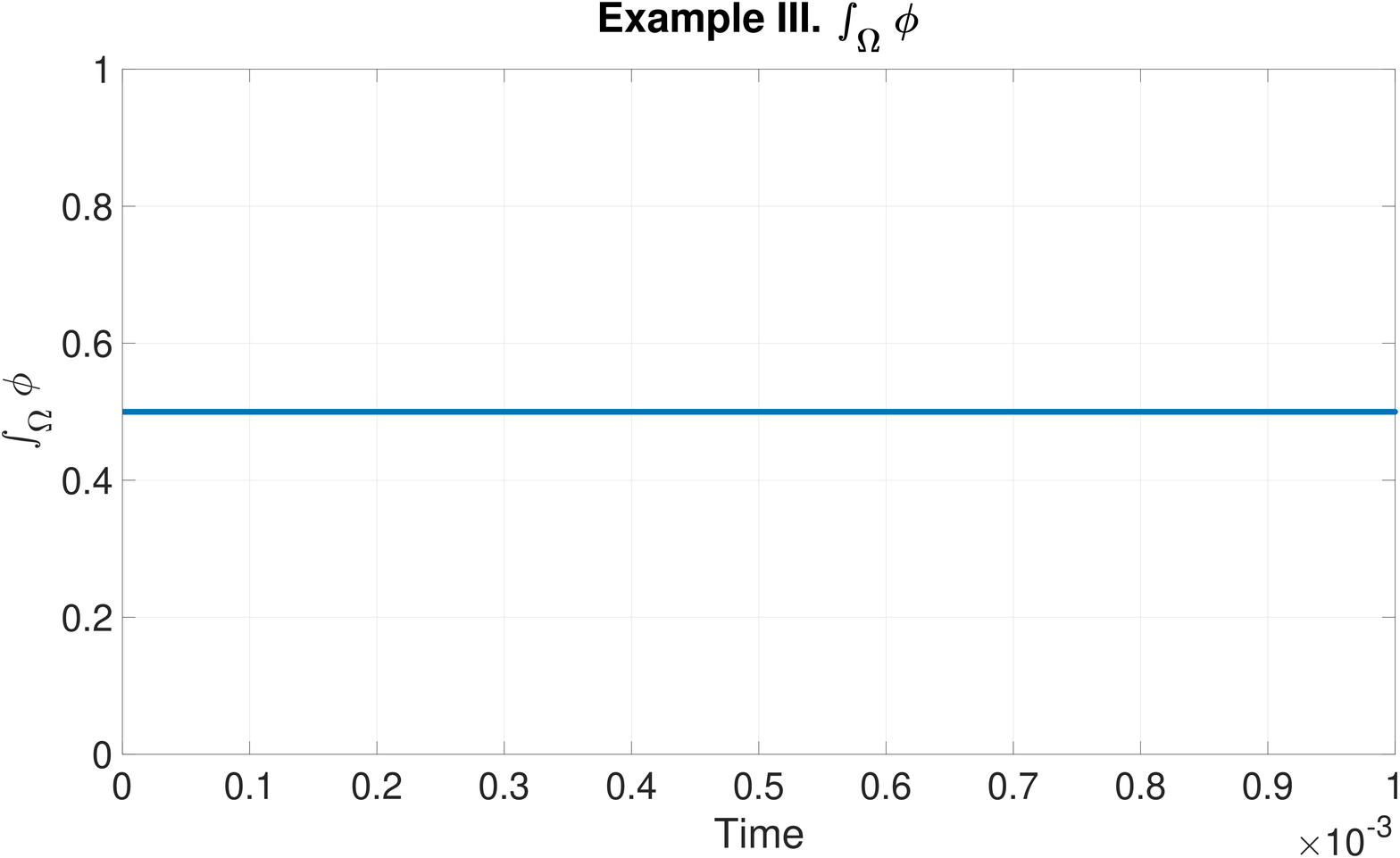}
\\
\includegraphics[scale=0.114]{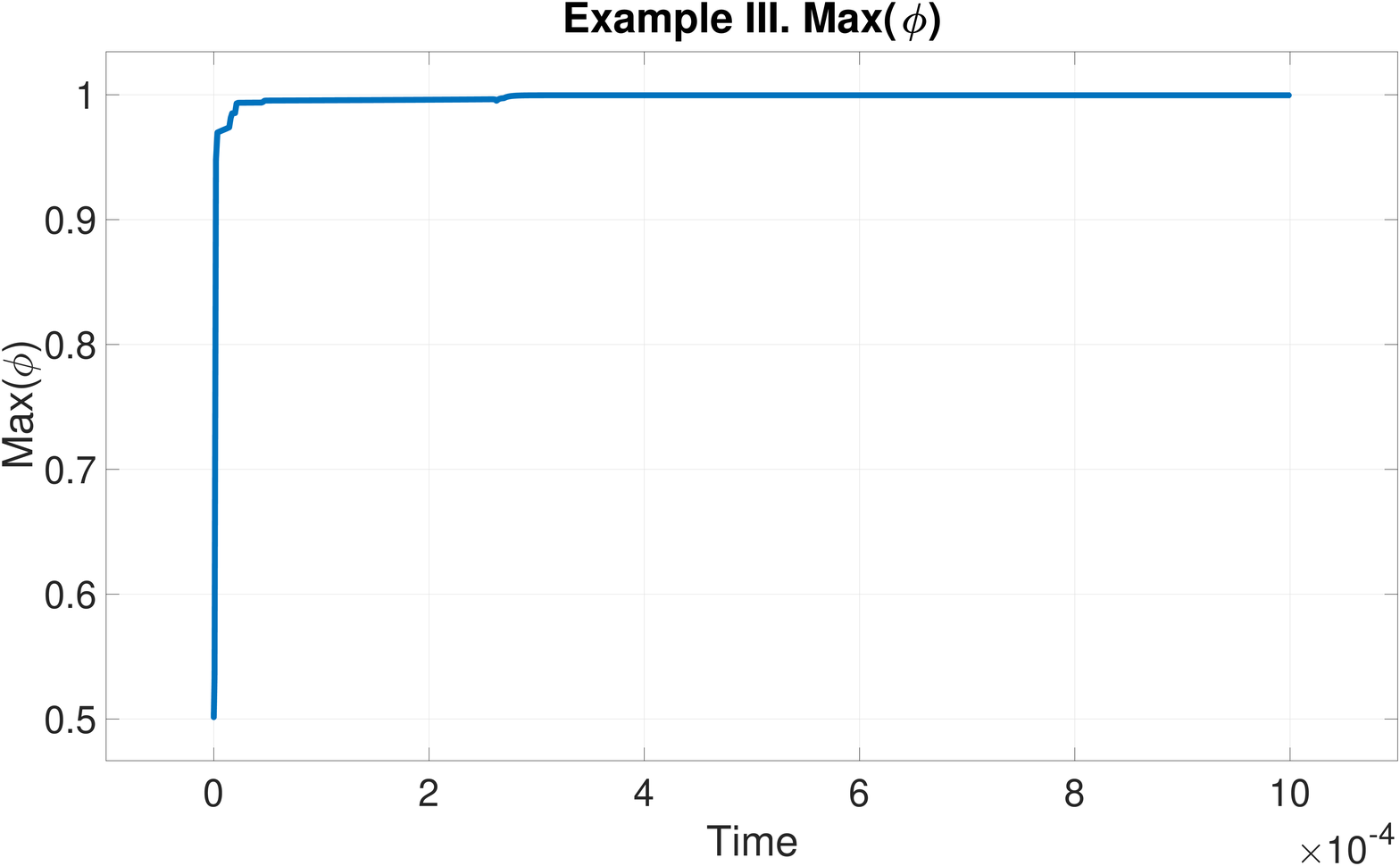}
\includegraphics[scale=0.114]{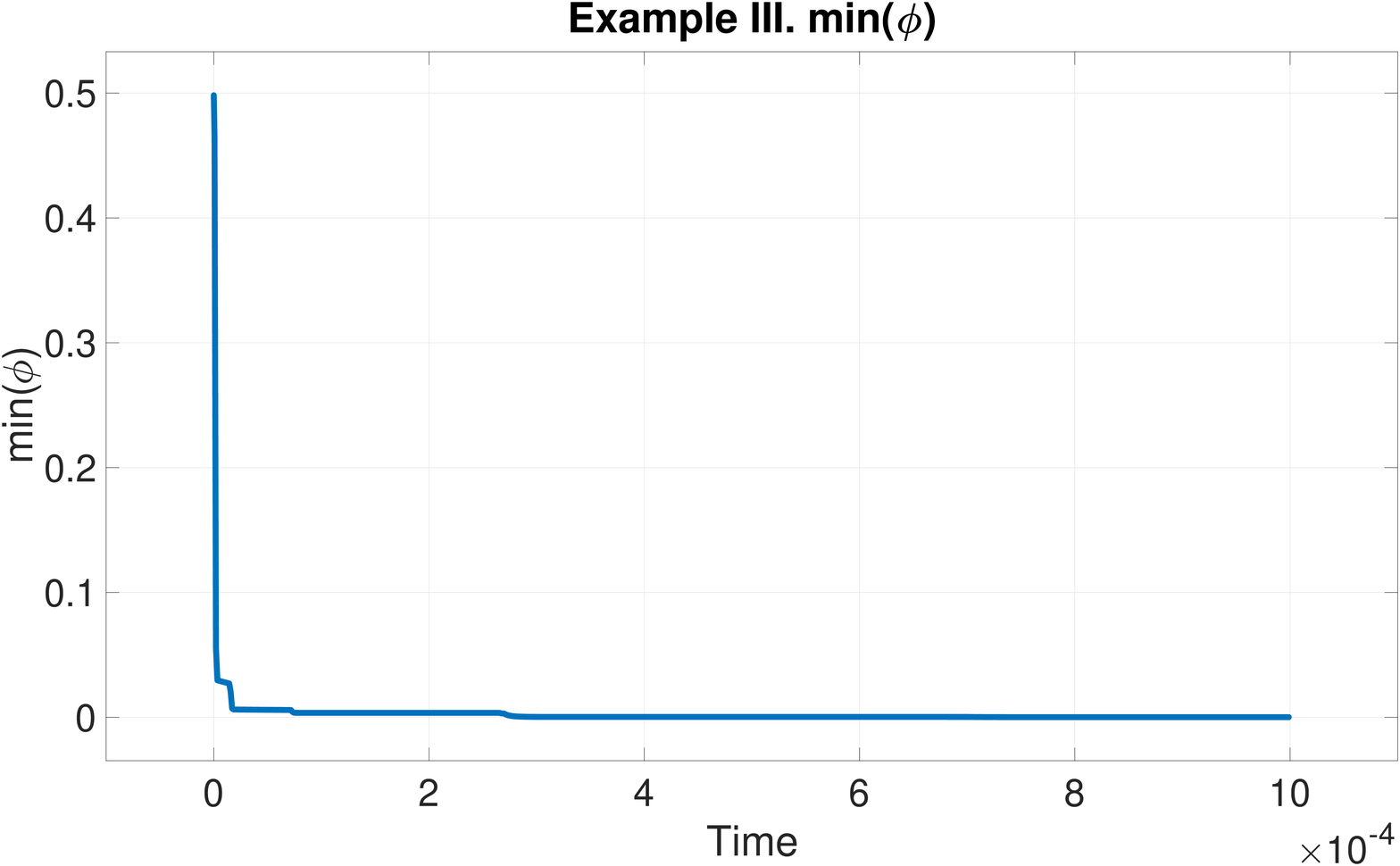}
\end{center}
\caption{Example III. Evolution in time of: $E(\phi)$ (top left), $\int_\Omega\phi dx$ (top right), $\max(\phi)$ (bottom left) and $\min(\phi)$ (bottom right).}\label{fig:ExIII_plots}
\end{figure}

\subsection{Example IV. Spinodal decomposition in $2D$}
In this section we perform simulations in the two dimensional domain $\Omega=[0,1]\times[0,1]$ in order to evidence that the effective implementation of the code in higher dimension can be performed in structured meshes and to compare the different dynamics obtained using constant and non-constant mobilities. 
To this end we have compared G$_\varepsilon$-scheme and J$_\varepsilon$-scheme (both with parameter $\varepsilon=10^{-8}$ and non-constant mobility) with M$_0$-scheme \eqref{eq:SchemeM0}  (with constant mobility $M_0(\phi)=1$).
 The time interval is $[0,5\times10^{-5}]$ and the rest of the considered parameters are $\eta=0.01$, $N=10^2$ and $\Delta t=10^{-9}$. 
As in the previous example, we have considered initially that the two components of our system are very mixed by taking as initial condition a random perturbation of amplitude $0.01$ of the constant value $\phi=0.5$ to simulate a spinodal decomposition dynamic. The dynamics of the three simulations are presented in Figure~\ref{fig:ExIV_dynamics}, and we can observe (as expected) that the separation process occurs faster in the case of considering a constant mobility (this fact is also illustrated in the results presented in \cite{DaiDu16}). 
\begin{figure}
\begin{center}
\includegraphics[scale=0.074]{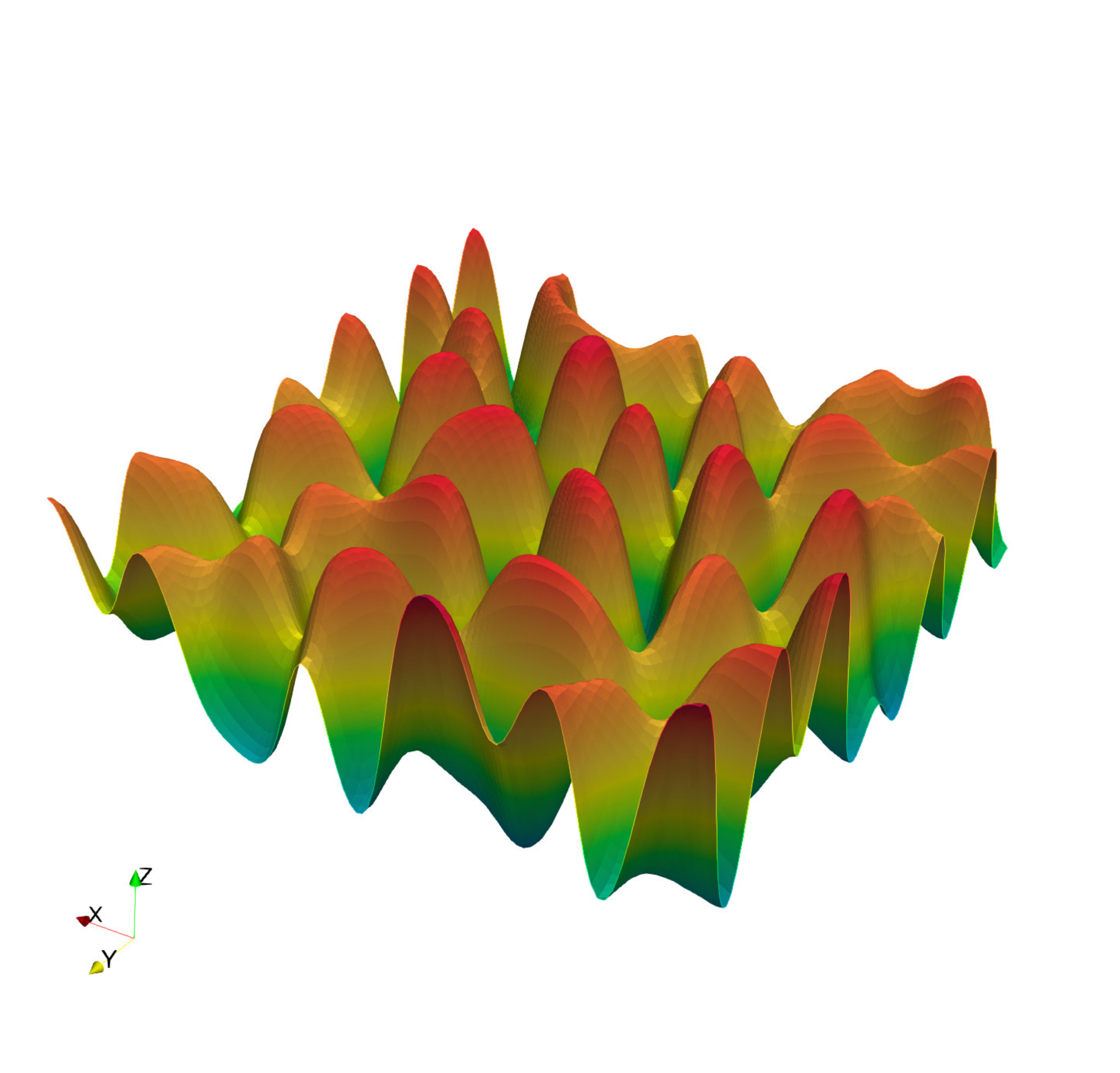}
\includegraphics[scale=0.074]{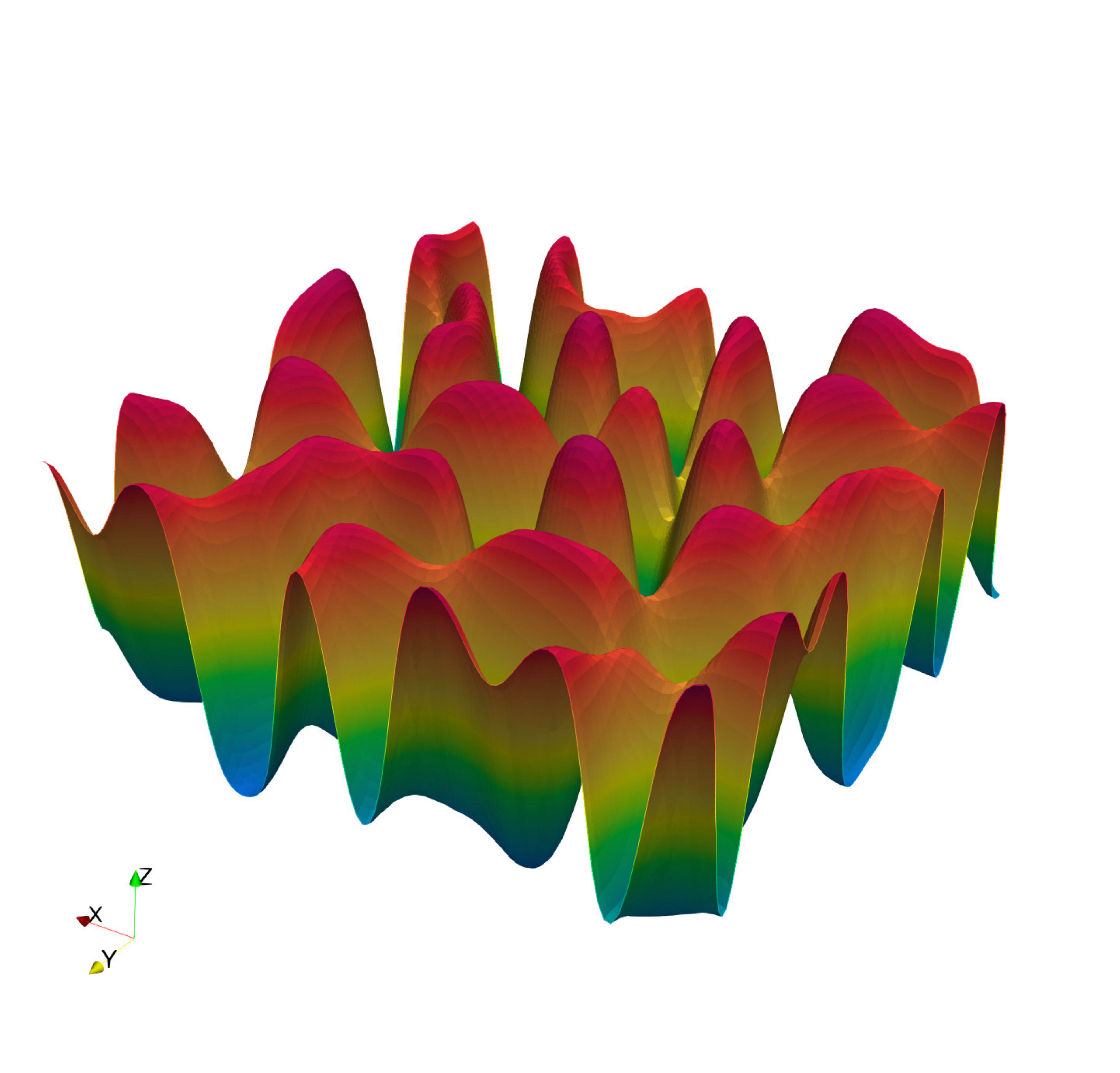}
\includegraphics[scale=0.074]{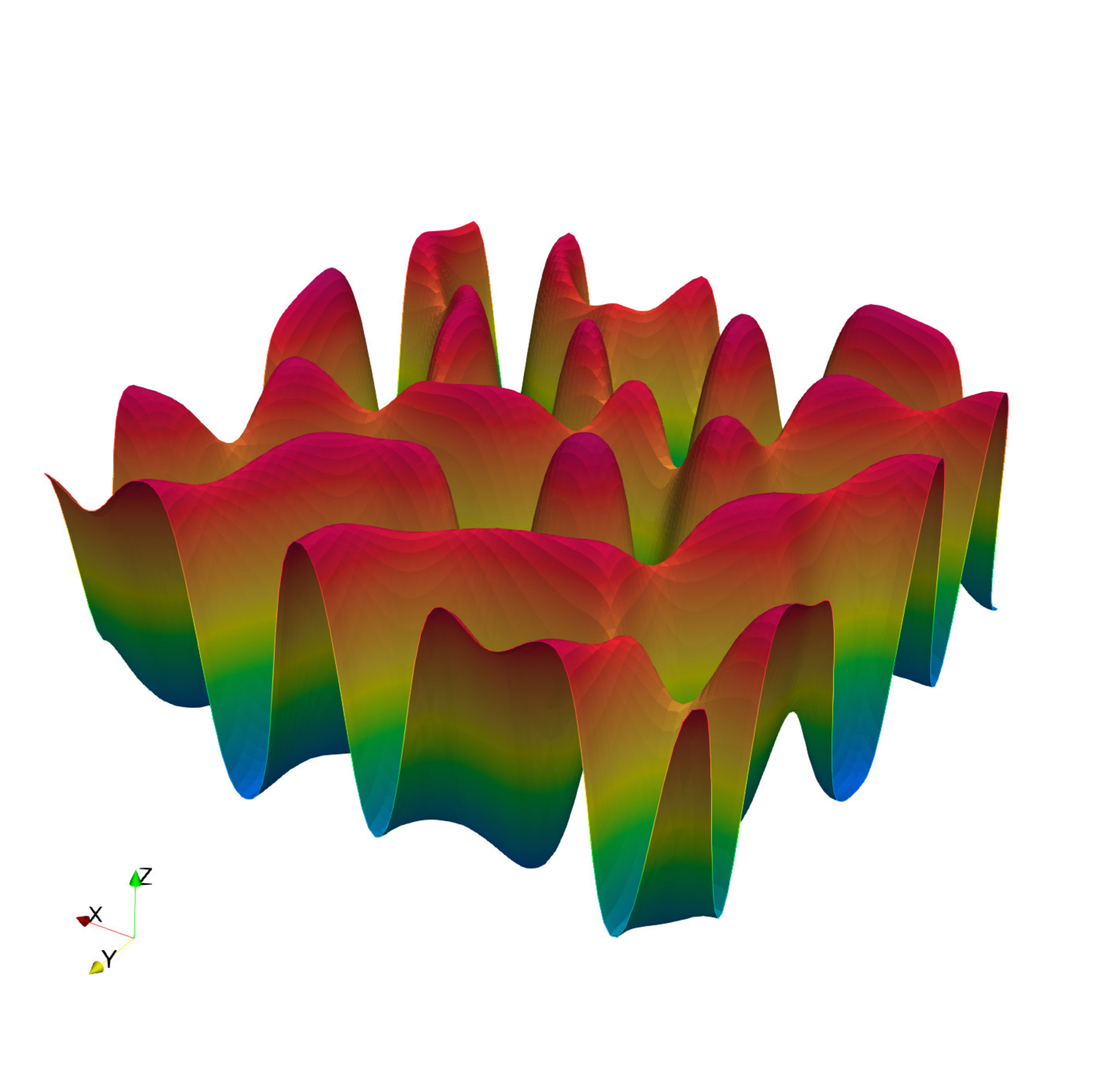}
\includegraphics[scale=0.074]{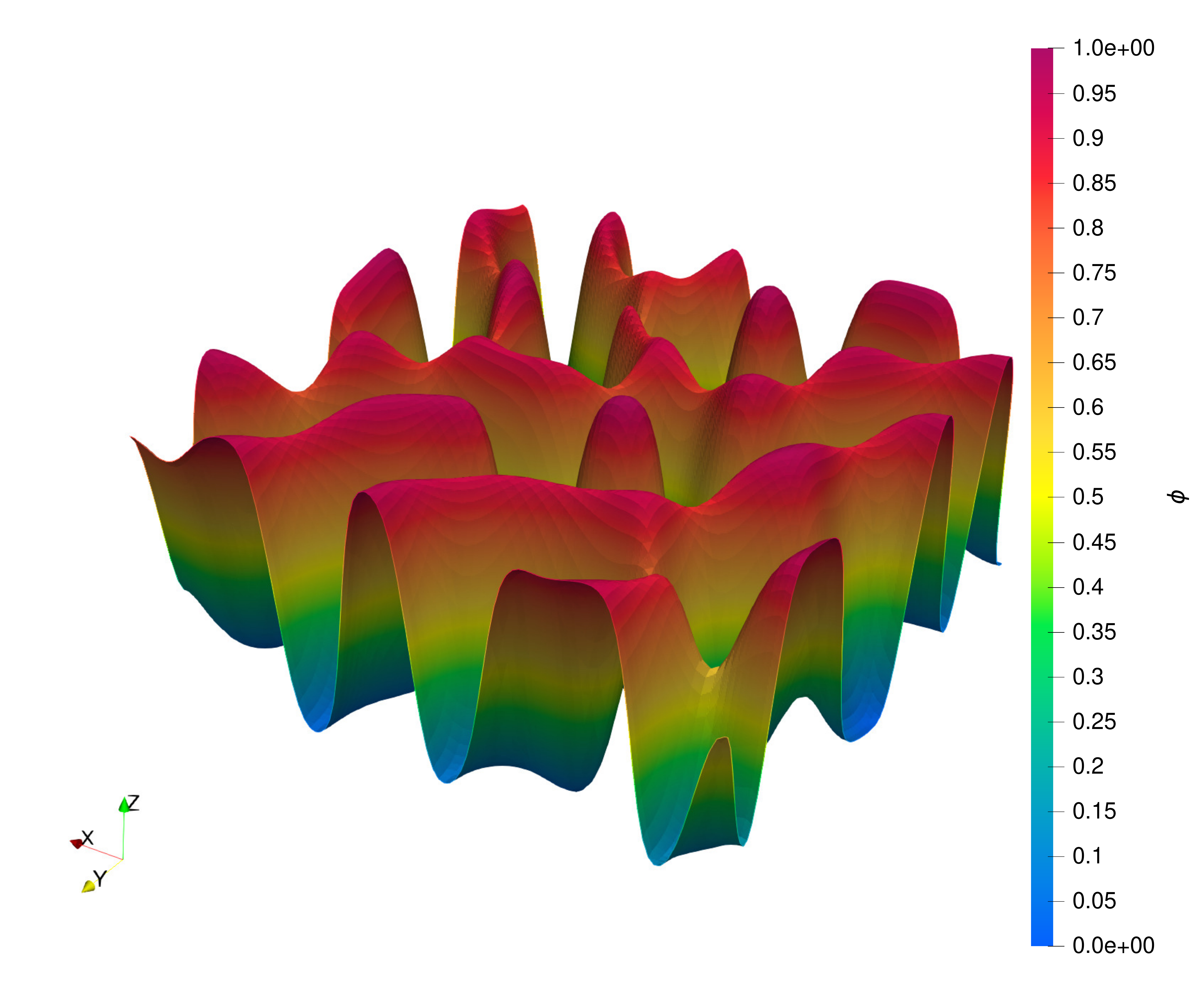}
\\
\includegraphics[scale=0.074]{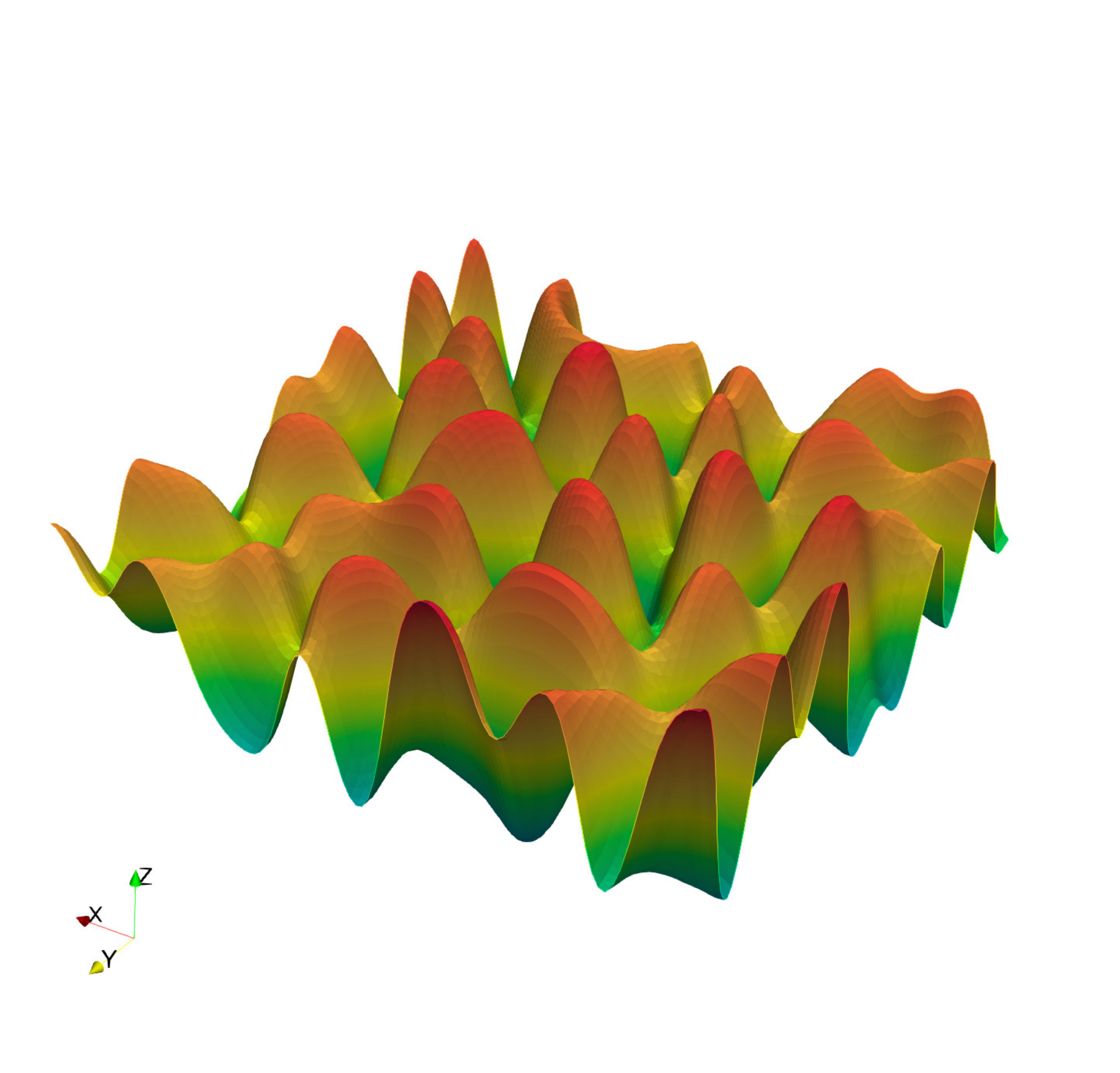}
\includegraphics[scale=0.074]{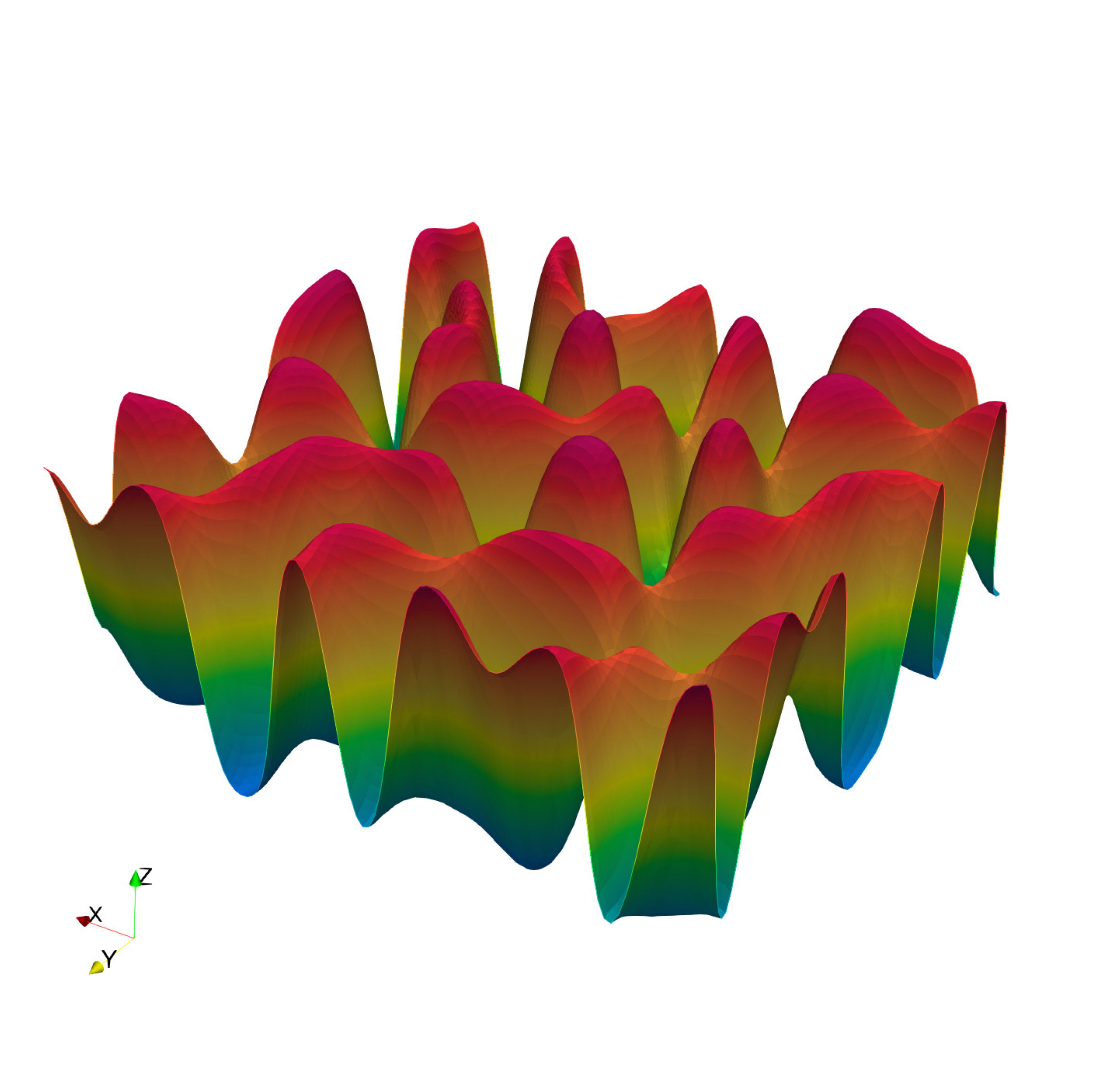}
\includegraphics[scale=0.074]{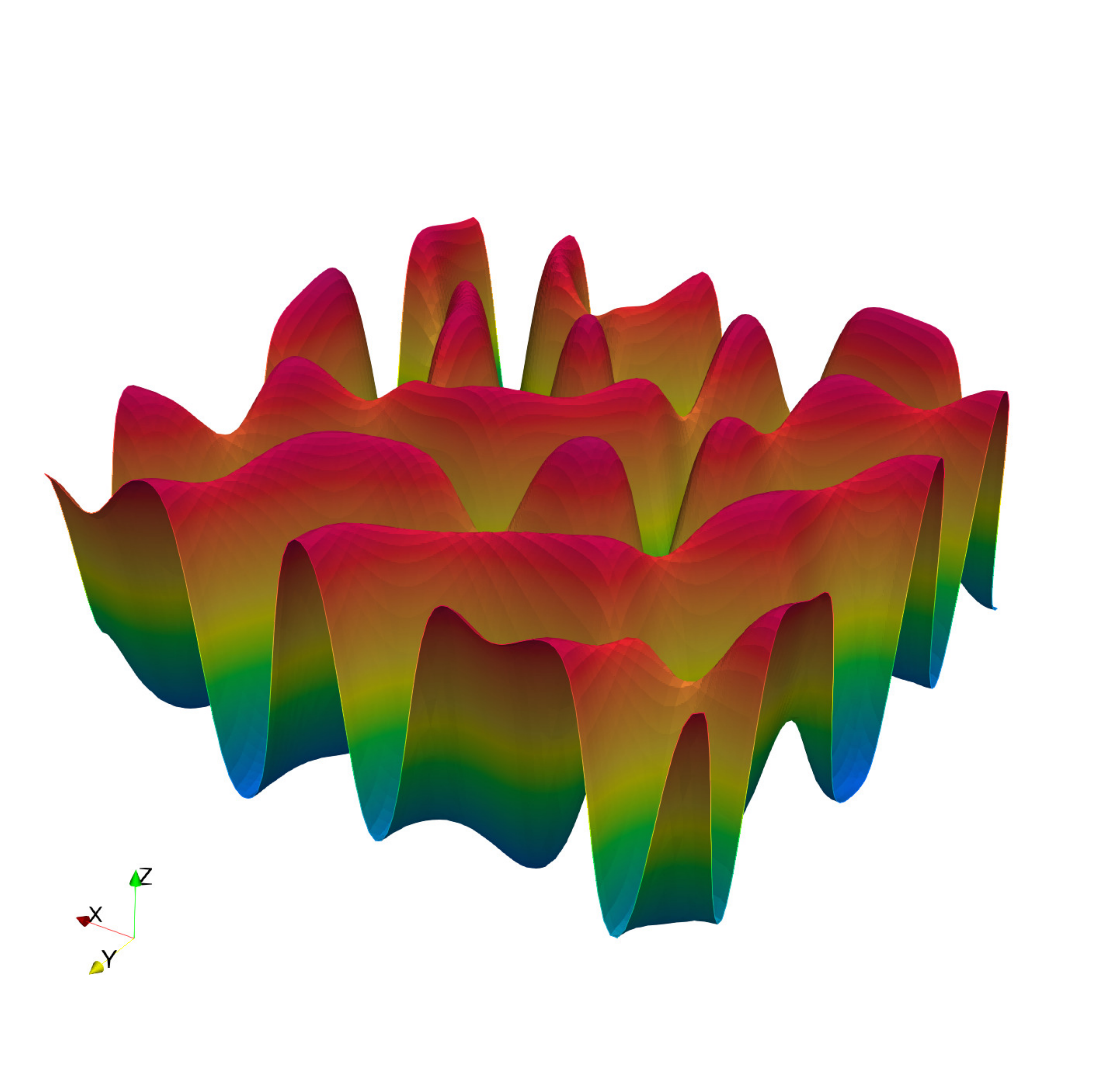}
\includegraphics[scale=0.074]{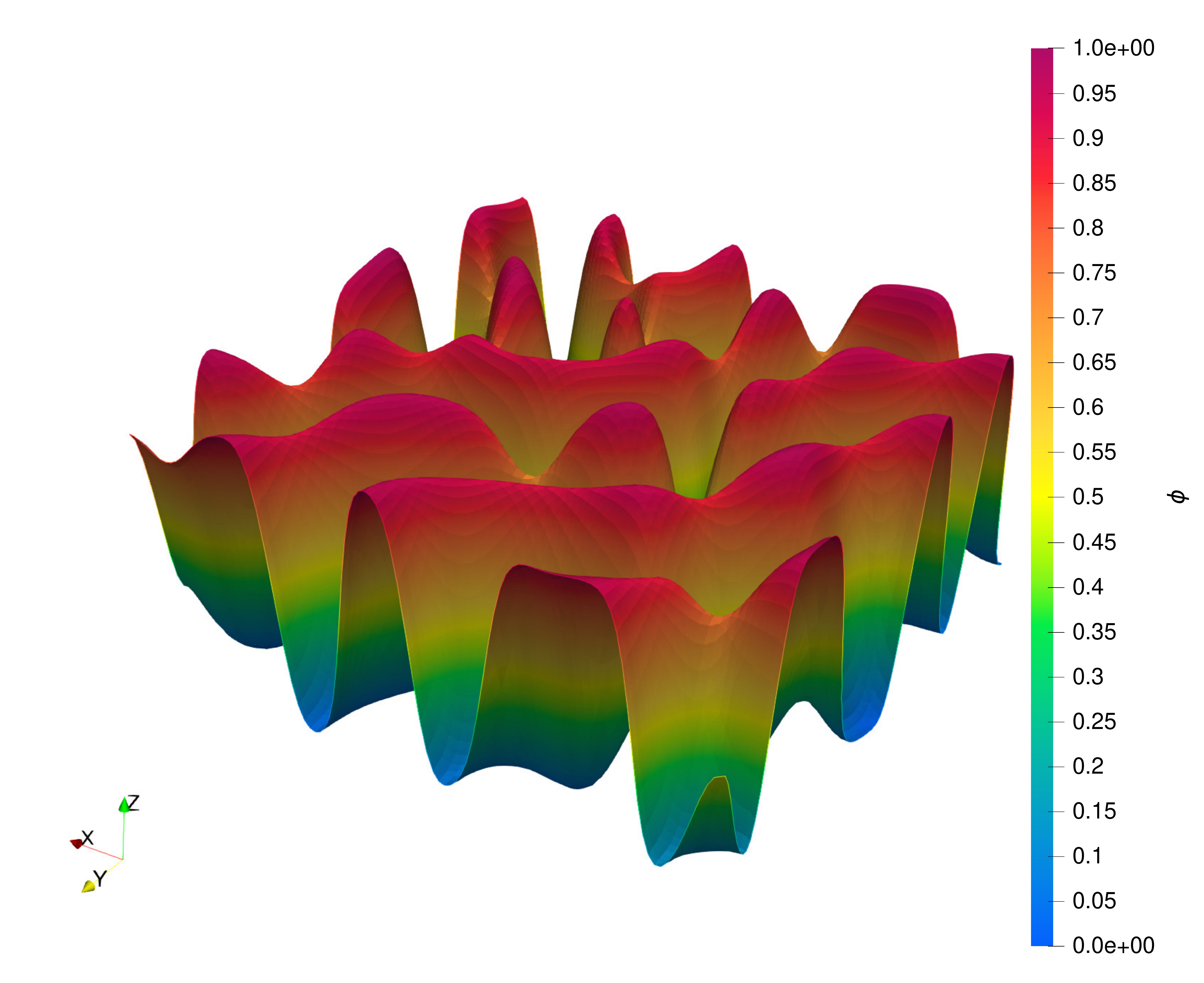}
\\
\includegraphics[scale=0.074]{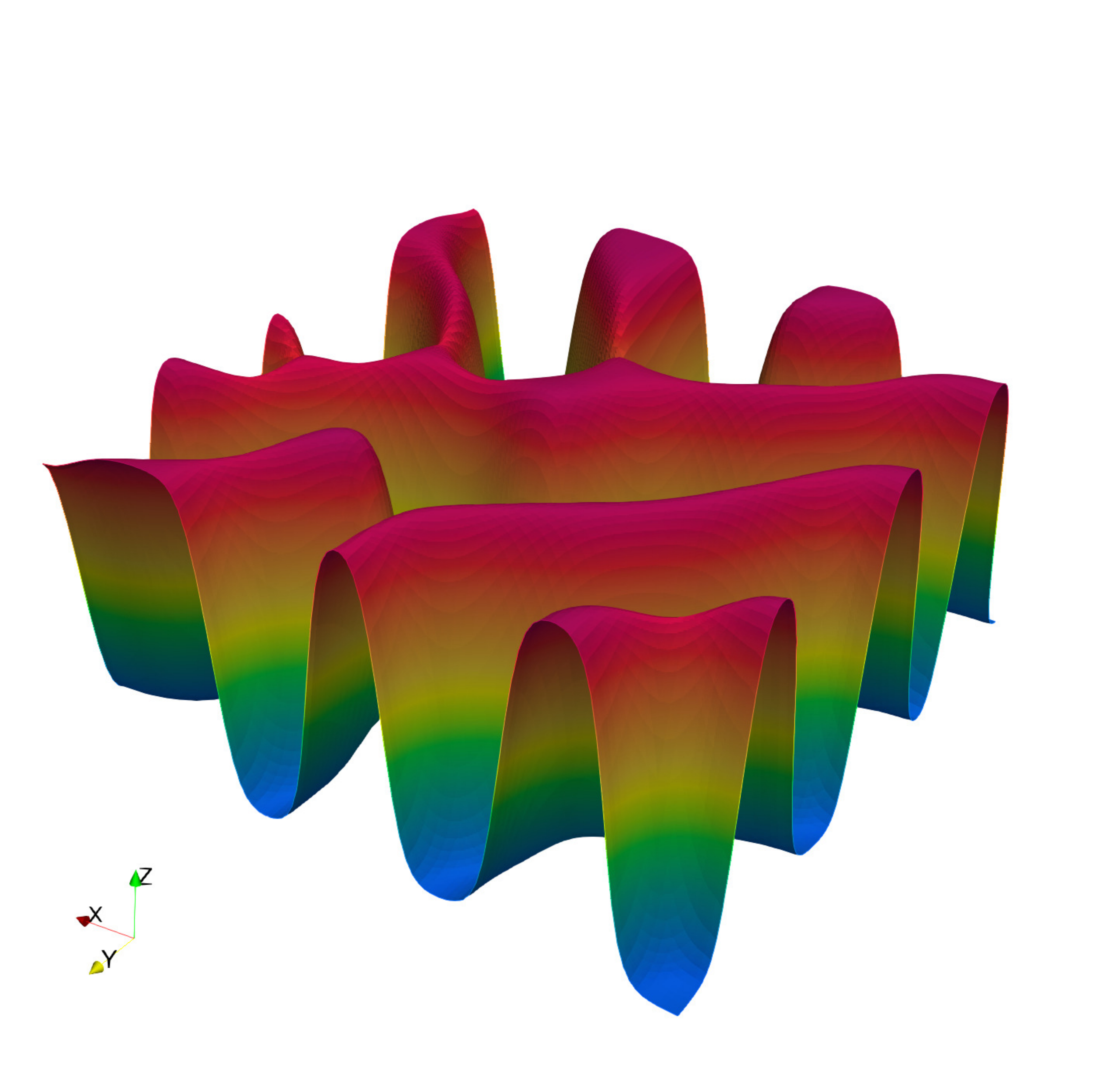}
\includegraphics[scale=0.074]{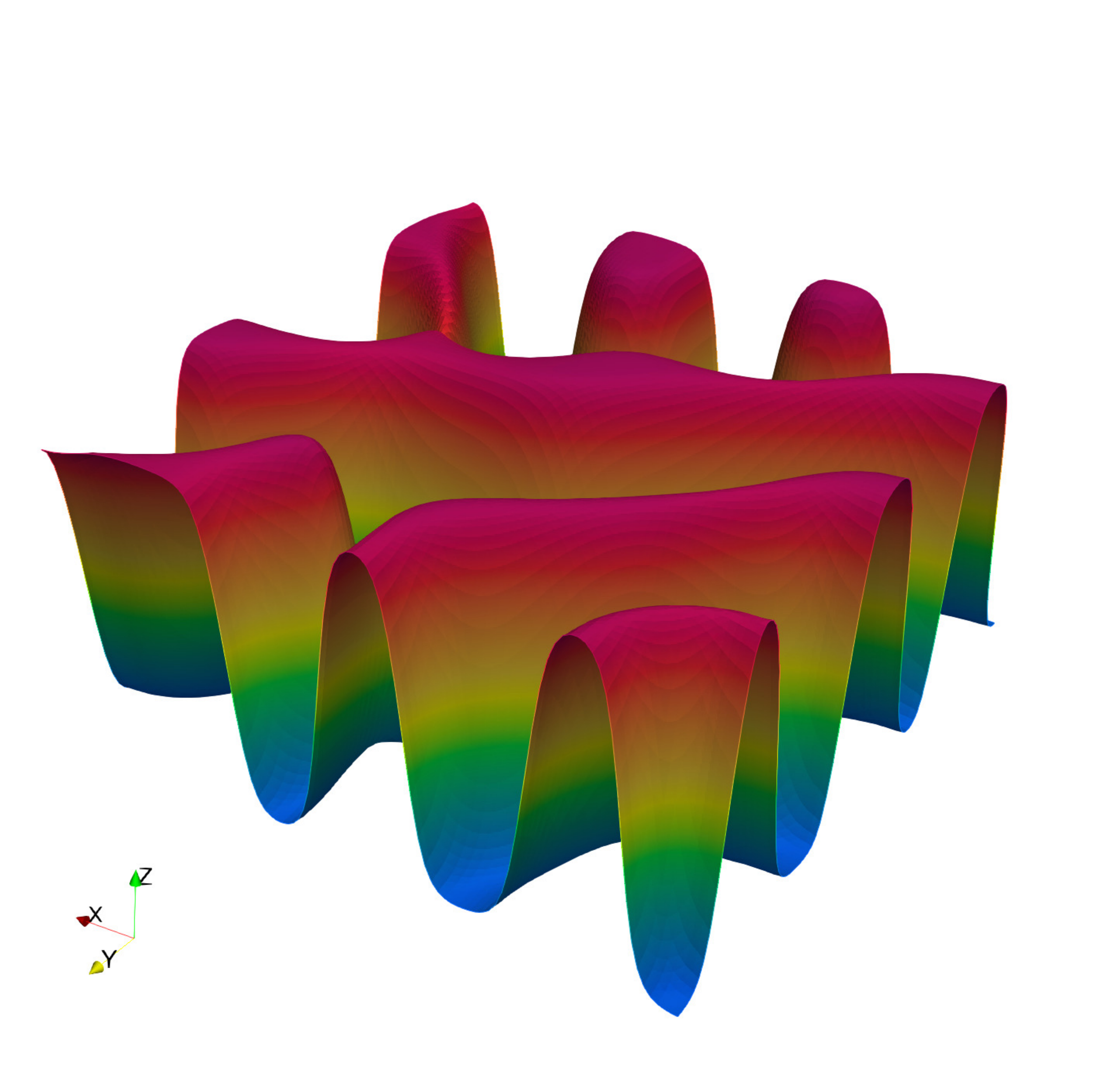}
\includegraphics[scale=0.074]{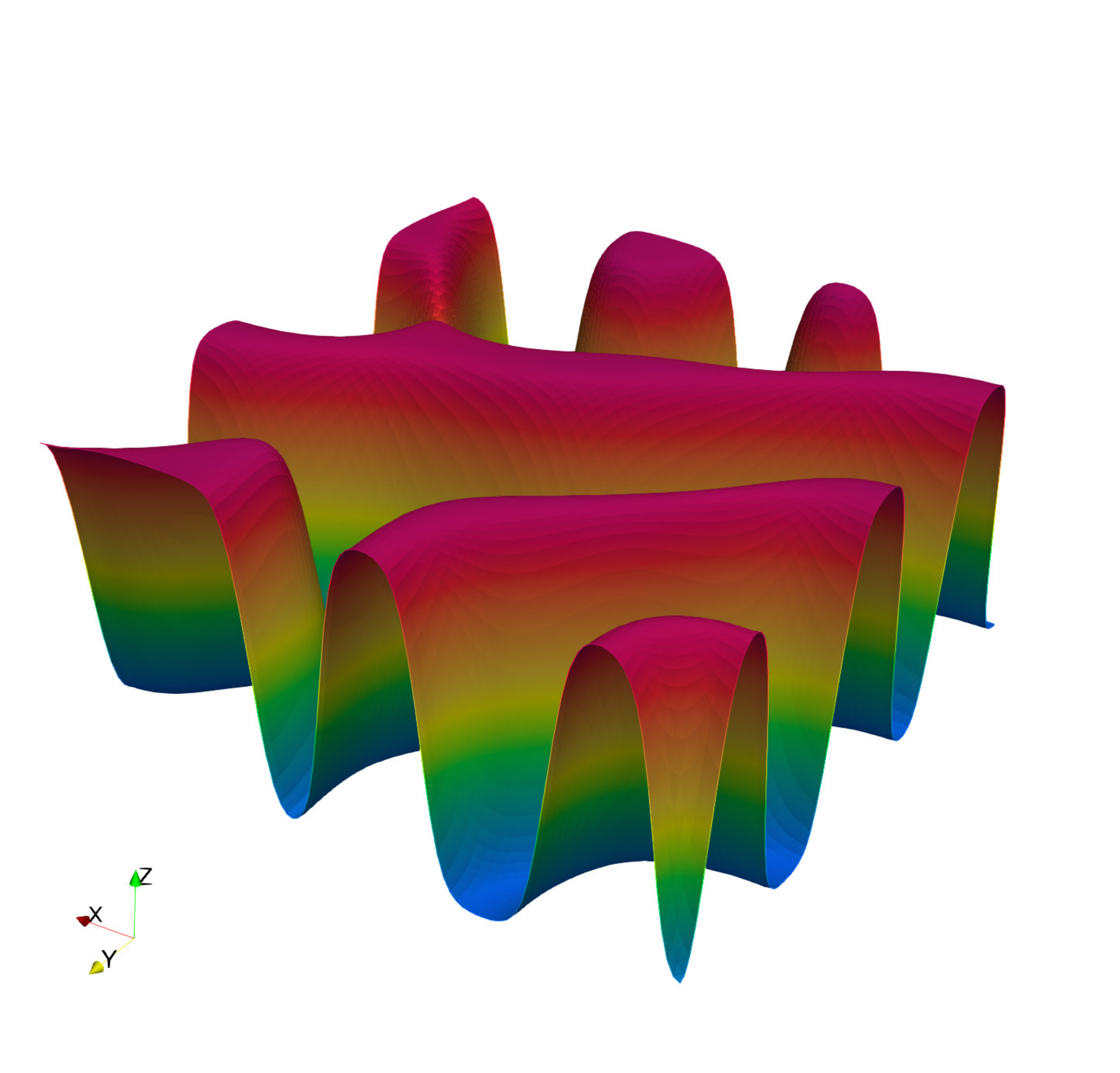}
\includegraphics[scale=0.074]{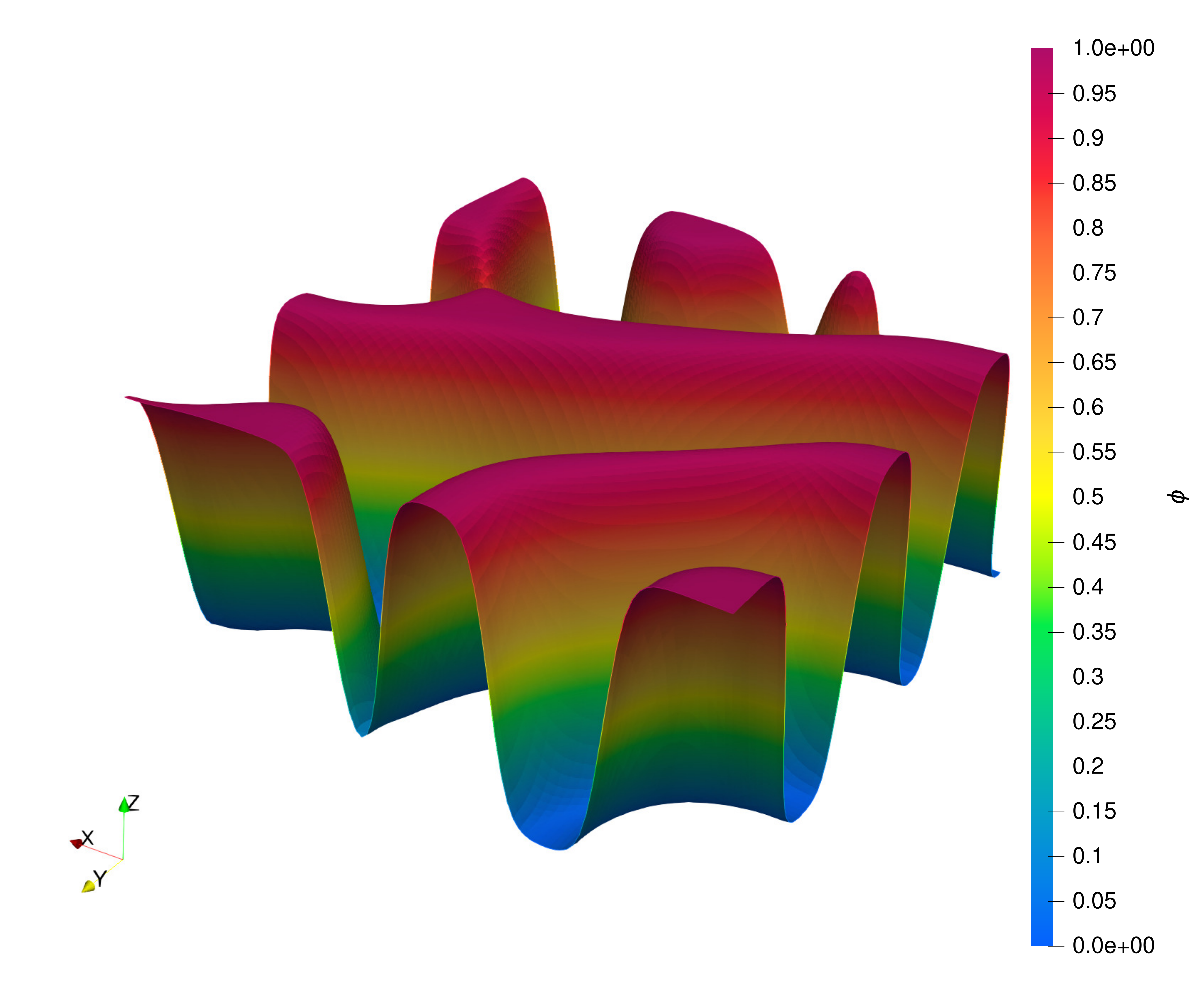}
\end{center}
\caption{Example IV. Evolution in time (from left to right) of $\phi$ at times $t= 2\times10^{-5}, 3\times10^{-5}, 4\times10^{-5}$ and $5\times10^{-5}$. Top row: G$_\varepsilon$-scheme. Middle row: J$_\varepsilon$-scheme. Bottom row: Scheme with $M(\phi)=1$.}\label{fig:ExIV_dynamics}
\end{figure}

\section{Conclusions}\label{sec:conclusions}

In this work we have derived two new numerical schemes to approximate the Cahn-Hilliard equation with degenerate mobility. The main ideas to derive the schemes are to first truncate the mobility term away from the values $\varepsilon $ and $1-\varepsilon $ and then realize that apart of the classical energy law, the system also satisfy estimates for singular potentials which can be used to derive estimates about the boundedness of the variable $\phi$ in terms of the truncation parameter $\varepsilon$. In particular, the derived estimates for each of the numerical schemes are associated to different singular potentials.

The resulting numerical schemes have been implemented and compared with the standard approach of truncating the mobility by zero (to avoid negative flux even if $\phi$ goes outside of the interval $[0,1]$) and it has been shown that the new schemes behave much better in terms of achieving the desired bounds for $\phi$. Moreover, the new schemes have been shown to obtain optimal order of convergence and to be able to capture the most challenging of the benchmarks for the Cahn-Hilliard equation, that is, to simulate properly the spinodal decomposition in one and two dimensions, evidencing that these ideas are  valid independently of the spatial dimension.

\section*{Acknowledgements}
This work has been partially supported by Grant PGC2018-098308-B-I00 (MCI/AEI/FEDER, UE, Spain). FGG has also been financed in part by the Grant US-1381261 (US/JUNTA/FEDER, UE, Spain) and Grant P20-01120 (PAIDI/JUNTA/FEDER, UE, Spain)

\end{document}